\def\tcr{\textcolor{red}}
\newtheorem{theorem}{Theorem}
\newtheorem{lemma}[theorem]{Lemma}
\newtheorem{proposition}[theorem]{Proposition}
\theoremstyle{definition}
\newtheorem{definition}[theorem]{Definition}
\newtheorem{remark}[theorem]{Remark}
\title{\textbf{Normal trace for vector fields of bounded mean oscillation}}
\author{Yoshikazu Giga and Zhongyang Gu}
\begin{document}
\date{}

\maketitle
\begin{abstract}
We introduce various spaces of vector fields of bounded mean oscillation ($BMO$) defined in a domain so that normal trace on the boundary is bounded when its divergence is well controlled.
 The behavior of ``normal'' component and ``tangential'' component may be different for our $BMO$ vector fields.
 As a result zero extension of the normal component stays in $BMO$ although such property may not hold for tangential components.
\end{abstract}
%
\begin{center}
Keywords: $BMO$, normal trace, duality, Jones' extension, Triebel-Lizorkin space.
\end{center}

\section{Introduction} 

One of basic questions on vector fields defined on a domain $\Omega$ in $\mathbf{R}^n$ is whether the normal trace is well controlled without estimating all partial derivatives when the divergence is well controlled.
 Such a type of estimates is well known when a vector field is $L^p$ ($1<p<\infty$) or $L^\infty$.
 Here are examples.
 Let $\Omega$ be a bounded domain with smooth boundary $\Gamma$.
 Let $\mathbf{n}$ denotes its exterior unit normal vector field on $\Gamma$.
 For simplicity we assume that a vector field $v$ satisfies $\operatorname{div}v=0$.
 Then there is a constant $C$ independent of $v$ such that
\begin{align}
	\| v \cdot \mathbf{n} \|_{W^{-1/p, p}(\Gamma)} &\leq C \| v \|_{L^p(\Omega)} \label{N1} \\
	\| v \cdot \mathbf{n} \|_{L^\infty(\Gamma)} &\leq C \| v \|_{L^\infty(\Omega)}. \label{N2}
\end{align}
Here $W^{s, p}$ denotes the Sobolev space which is actually a Besov space $B^s_{p,p}$ for non-integer $s$.
 The first estimate is a key to establish the Helmholtz decomposition of an $L^p$ vector field; see e.g.\ \cite{FM}.
 The second estimate is important to study for example total variation flow; see e.g.\ \cite[Appendix C1]{ACM}.
 These estimates \eqref{N1}, \eqref{N2} hold for various domains including the case that $\Omega$ is a half space $\mathbf{R}^n_+$ i.e.,
\[
	\mathbf{R}^n_+ = \bigl\{ (x_1, \ldots, x_n) \bigm| x_n > 0 \bigr\}.
\]

Our goal in this paper is to extend \eqref{N2} by replacing $\|v\|_{L^\infty(\Omega)}$ by some $BMO$ type norm.
 However, it turns out that the normal trace of divergence free $BMO$ vector fields may not be bounded.
 Indeed, consider
\[
	v = (v^1, v^2), \quad
	v^1(x) = v^2(x) = \log|x_1-x_2|. \quad
	x = (x_1, x_2) \in \mathbf{R}^2.
\]
This vector field is in $BMO(\mathbf{R}^2)$ and it is divergence free in distribution sence.
 Indeed,
\[
	\int_{\mathbf{R}^2} v\cdot\nabla\varphi \, dx
	= \frac{1}{2} \int_{\mathbf{R}^2}  \log |\zeta| 
	\bigl( (\partial_\zeta - \partial_\zeta) \tilde{\varphi} + (\partial_\eta + \partial_\eta) \tilde{\varphi} \bigr) \, d\zeta d\eta = 0,
\]
\[
	\zeta = x_1 - x_2, \quad
	\eta = x_1 + x_2
\]
for all compactly supported smooth function $\varphi$, i.e., $\varphi \in C^\infty_c(\mathbf{R}^2)$.
 Here, $\tilde{\varphi} (\zeta, \eta) = \varphi \bigl( (\zeta+\eta)/2, (\eta-\zeta)/2 \bigr)$.
 However, if we consider $\Omega = \mathbf{R}^2_+$ and $\Gamma = \{ x_2 = 0 \}$, then $v \cdot \mathbf{n} = -v_2$ on $\Gamma$ is clearly unbounded.
 This example indicates that we need some control near the boundary.
 Such a control is introduced in \cite{BG}, \cite{BGS}, \cite{BGMST}, \cite{BGST}.
 More precisely, for $f \in L^1_{\mathrm{loc}}(\Omega)$, $\nu \in (0, \infty]$ they introduced a seminorm
\[
	[f]_{b^\nu} := \sup \left\{ r^{-n} \int_{\Omega\cap B_r(x)} \left| f(y) \right| \, dy \biggm| x \in \Gamma,\ 0<r<\nu \right\},
\]
where $B_r(x)$ denotes the closed ball of radius $r$ centered at $x$.
 For $\mu \in (0,\infty]$ they define
\[
	[f]_{BMO^\mu} := \sup \left\{ \frac{1}{\left| B_r (x) \right|} \int_{B_r(x)} \left| f - f_{B_r(x)} \right| \, dy \biggm| B_r(x) \subset \Omega,\ r<\mu \right\},
\]where $\displaystyle f_B = \frac{1}{|B|} \int_B f(y) \, dy$, the average over $B$; here $|B|$ denotes the Lebesgue measure of $B$.
 The $BMO$ type space $BMO^{\mu,\nu}_b$ introduced in these papers is the space of $f \in L^1_{\mathrm{loc}}(\Omega)$ having finite
\[
	\| f \|_{BMO^{\mu,\nu}_b} := [f]_{BMO^\mu} + [f]_{b^\nu}.
\]
This space is very convenient to study the Stokes semigroup in \cite{BG}, \cite{BGS}, \cite{BGMST}, \cite{BGST} as well as the heat semigroup \cite{BGST}.
One of our main results (Theorem \ref{TRBB}) yields
\begin{equation} \label{N3}
	\| v \cdot \mathbf{n} \|_{L^\infty(\Gamma)} \leq C \|v\|_{BMO^{\mu,\nu}_b}
\end{equation}
for any $\mu,\nu \in (0,\infty]$ for any uniformly $C^{1+\beta}$ domain.

However, for applications, especially to establish the Helmholtz decomposion $BMO^{\mu,\nu}_b$ norm for all components in too strong so we would like to estimate by a weaker norm.
 We only use $b^\nu$ seminorm for normal component of a vector field $v$.
 To decompose the vector field let $d_\Omega(x)$ be the distance of $x \in \Omega$ from the boundary $\Gamma$, i.e., 
\[
	d_\Omega(x) := \inf \bigl\{ |x-y| \bigm| y \in \Gamma \bigr\}. 
\]
If $\Omega$ is uniformly $C^2$, then $d_\Omega$ is $C^2$ in a $\delta$-tubular neighborhood $\Gamma_\delta$ of $\Gamma$ for some $\delta<R_*$, where $R_*$ is the reach of $\Gamma$ \cite[Chapter 14, Appendix]{GT}, \cite[\S4.4]{KP}; here
\[
	\Gamma_\delta := \bigl\{ x \in \Omega \bigm| d_\Omega(x) < \delta \bigr\}. 
\]
Instead of \eqref{N3}, our main results (Theorem \ref{NTH}, \ref{NTG}) together with Theorem \ref{V1} read:
\begin{equation} \label{MTB}
	\| v \cdot \mathbf{n} \|_{L^\infty(\Gamma)} \leq C \bigl( [v]_{BMO^\mu} + [\nabla d_\Omega \cdot v]_{b^\nu} \bigr)
\end{equation}
for $\nu \leq \delta$, $\mu \in (0,\infty]$ provided that $\Omega$ is a bounded $C^{2+\beta}$ domain with $\beta\in(0,1)$.
 The quantity $\nabla d_\Omega \cdot v$ is a kind of normal component.
 
Our main strategy is to use the formula
\[
	\int_\Gamma (v \cdot \mathbf{n}) \psi\, d\mathcal{H}^{n-1}
	= \int_\Omega (\operatorname{div}v) \rho\, dx
	- \int_\Omega v\cdot \nabla\varphi\, dx
\]
for any $\varphi \in C^\infty_c(\overline{\Omega})$ with $\varphi \, |_\Gamma = \psi$, where $d\mathcal{H}^{n-1}$ denotes the surface element.
 This formula is obtained by integration by parts.
 If $\operatorname{div} v=0$, then it reads
\begin{equation} \label{SINT}
	\int_\Gamma (v \cdot \mathbf{n}) \psi\, d\mathcal{H}^{n-1}
	= -\int_\Omega v\cdot \nabla\varphi\, dx.
\end{equation}
Our estimate \eqref{MTB} follows from localization, flattening the boundary and duality argument.
 To get the flavor we explain the case when $\Omega$ is the half space $\mathbf{R}^n_+$.
 For $\psi \in L^1(\Gamma)$ it is known that there is $\varphi \in F^1_{1,2}(\mathbf{R}^n)$ such that its trace to the boundary equals to $\psi$; see e.g. \cite[Section 4.4.3]{Tr92}.
 Here $F^1_{1,2}$ denotes the Triebel-Lizorkin space which means that $\nabla\varphi \in h^1$, a localized Hardy space.
 We may assume that $\varphi$ is even in $x_n$.
 We extend $v=(v', v_n)$ even in $x_n$ for tangential part $v'$ and odd in $x_n$ for the normal part $v_n=\nabla d_\Omega \cdot v$.
 Although extended $v'$ is still in $BMO^\infty(\mathbf{R}^n)$, the extended $v_n$ may not be in $BMO^\infty(\mathbf{R}^n)$ unless we assume $[v_n]_{b^\nu}<\infty$.
 Here we invoke $[\nabla d_\Omega \cdot v]_{b^\nu} < \infty$.
 By these extensions our \eqref{SINT} yields
\begin{equation} \label{KID}
	\int_\Gamma (v \cdot \mathbf{n}) \psi\, d\mathcal{H}^{n-1}
	= -\frac{1}{2} \int_{\mathbf{R}^n} v\cdot \nabla\varphi\, dx,
\end{equation}
where $v$ denotes the extended vector field.
 We apply $h^1$-$bmo$ duality \cite[Theorem 3.22]{Sa} for \eqref{KID} to get
\[
	\left| \int_\Gamma (v \cdot \mathbf{n}) \psi\, d\mathcal{H}^{n-1} \right|
	\leq C \|v\|_{bmo} \|\varphi\|_{F^1_{1,2}},
\]
where $bmo=BMO\cap L^1_\mathrm{ul}$ a localized $BMO$ space.
 Here $L^1_\mathrm{ul}$ denotes a uniformly local $L^1$ space; see Section \ref{PS} for details.
 Since $\|\varphi\|_{F^1_{1,2}} \leq C\|\psi\|_{L^1}$, this implies
\begin{align} \label{MTH}
	\| v \cdot \mathbf{n} \|_{L^\infty(\Gamma)}
	&\leq C \|v\|_{bmo(\mathbf{R}^n)} \\
	&\leq C \left( [v]_{BMO^\infty(\Omega)} 
	+ [v]_{L^1_\mathrm{ul}(\Omega)}
	+ [ \nabla d_\Omega \cdot v ]_{b^\infty} \right). 
\end{align}
Here and hereafter $C$ denotes a constant depends only on space dimension and its numerical value may be different line by line.

In the case of a curved domain we need localization and flattening procedure by using normal (principal) coordinates.
 The localized space $bmo^\mu_\delta=BMO^\mu \cap L^1_\mathrm{ul}(\Gamma_\delta)$ is convenient for this purpose.
 Again we have to handle normal component $\nabla d_\Omega \cdot v$ separately.
 If the domain has a compact boundary, we are able to remove $L^1_\mathrm{ul}$ term in \eqref{MTH} and we deduce the estimate \eqref{MTB}.
 Note that in this trace estimate only the behavior of $v$ near $\Gamma$ is important so one may use finite exponents in $BMO^\mu$ and $b^\nu$.

As a byproduct we notice the extension problem of $BMO$ functions.
 In general, zero extension of $v \in BMO^{\mu}(\Omega)$ may not belong to $BMO^\mu(\mathbf{R}^n)$ but if $v$ is in $BMO^{\mu,\nu}_b$, as noticed in \cite{BGST}, its zero extension belongs to $BMO^\mu(\mathbf{R}^n)$ for $\nu \geq 2\mu$.
 We also note that it is possible to extend general $bmo^\mu_\delta(\Omega)$ to $BMO^\mu$ whose support is near $\overline{\Omega}$.
 We develop such a theory to explain the role of $b^\nu$.

This paper is organized as follows. In Section \ref{PS} we introduce several localized $BMO$ spaces and compared these spaces.
 Some of them are discussed in \cite{BGST}.
 We introduce a new space $vbmo^{\mu,\nu}_\delta$ which requires that the $b^\nu$ seminorm of the normal component is bounded in $(bmo^\mu_\delta)^n$.
 A key observation is that if the boundary of the domain is compact, i.e., either a bounded or an exterior domain, the requirement in $L^1_\mathrm{ul}(\Gamma_\delta)$ is redudent in the definition of $vbmo^{\mu,\nu}_\delta$.
 In Section \ref{JE} we discuss extension problem as well as localization problem.
 In Section \ref{TR} we shall prove our main results.
 In Appendix we discuss coordinate change of vector fields by normal coordinates for the reader's convenience.

\section{Spaces} \label{PS} 

In this section we fix notation of important function spaces.
 Let $L^1_{\mathrm{ul}}(\mathbf{R}^n)$ be a uniformly $L^1$ space, i.e., for a fixed $r_0>0$
\[
	L^1_{\mathrm{ul}}(\mathbf{R}^n) := 
	\left\{ f \in L^1_{\mathrm{loc}} (\mathbf{R}^n) \biggm| 
	\| f \|_{L^1_{\mathrm{ul}}} := \sup_{x \in \mathbf{R}^n} \int_{B_{r_0}(x)} \bigl| f(y) \bigr| \, dy < \infty \right\}.
\]
The space is independent of the choice of $r_0$.
 For a domain $\Omega$ the space $L^1_{\mathrm{ul}}$ is the space of all $L^1_{\mathrm{loc}}$ functions $f$ in $\Omega$ whose zero extension belongs to $L^1_{\mathrm{ul}}(\mathbf{R}^n)$.
 In other words
\[
	L^1_{\mathrm{ul}}(\Omega) := 
	\left\{ f \in L^1_{\mathrm{loc}} (\Omega) \biggm| 
	\| f \|_{L^1_{\mathrm{ul}}(\Omega)} := \sup_{x \in \mathbf{R}^n} \int_{B_{r_0}(x)\cap\Omega} \bigl| f(y) \bigr| \, dy < \infty \right\}.
\]
As in \cite{BG} we set
\[
	BMO^\mu (\Omega) := 
	\left\{ f \in L^1_{\mathrm{loc}} (\Omega) \biggm| 
	[f]_{BMO^\mu} < \infty \right\}.
\]
For $\delta \in (0,\infty]$ we set
\[
	bmo^\mu_\delta (\Omega) := BMO^\mu (\Omega) \cap L^1_{\mathrm{ul}} (\Gamma_\delta) = 
	\left\{ f \in BMO^\mu (\Omega) \bigm| 
	\text{restriction of } f \text{ on } \Gamma_\delta \text{ is in }  L^1_{\mathrm{ul}}(\Gamma_\delta) \right\}.
\]
This is a Banach space equipped with the norm
\[
	\| f \|_{bmo^\mu_\delta} := [f]_{BMO^\mu(\Omega)} + [f]_{\Gamma_\delta}, \quad
	 [f]_{\Gamma_\delta} := \| f \|_{L^1_{\mathrm{ul}}(\Gamma_\delta)},
\]
where the restriction of $f$ on $\Gamma_\delta$ is still denoted by $f$.
 If there is no boundary we set
\[
	bmo(\mathbf{R}^n) := BMO^\infty (\mathbf{R}^n) \cap L^1_{\mathrm{ul}}(\mathbf{R}^n)
\] 
which is a local $BMO$ space and it agrees with the Triebel-Lizorkin space $F^0_{\infty, 2}$; see e.g,\ \cite[Section 1.7.1]{Tr92}, \cite[Theorem 3.26]{Sa}.

For vector-valued function spaces we still write $BMO^\mu$ instead of $(BMO^\mu)^n$.
 For example for vector field $v$ by $v \in bmo^\mu_\delta(\Omega)$ we mean that
\[
	v = (v_1, \ldots, v_n), \quad 
	v_i \in bmo^\mu_\delta(\Omega), \quad
	1 \leq i \leq n.
\]
We next introduce the space of vector fields whose normal component has finite $b^\nu$ of the form
\[
	vbmo^{\mu, \nu}_\delta(\Omega) := \left\{ v \in bmo^\mu_\delta (\Omega) \bigm| 
	[\nabla d_\Omega \cdot v]_{b^\nu} < \infty \right\}
\]
for $\nu \in (0,\infty]$.
 This space is a Banach space equipped with the norm
\[
	\| v \|_{vbmo^{\mu, \nu}_\delta} := \| v \|_{bmo^\mu_\delta} + [\nabla d_\Omega \cdot v]_{b^\nu}. 
\]
Similarly, we introduce another space
\[
	vBMO^{\mu, \nu}(\Omega) := \left\{ v \in BMO^\mu (\Omega) \bigm| 
	[\nabla d_\Omega \cdot v]_{b^\nu} < \infty \right\}
\]
equipped with a seminorm
\[
	[v]_{vBMO^{\mu, \nu}} := [v]_{BMO^\mu} + [\nabla d_\Omega \cdot v]_{b^\nu}. 
\]
Of course, this is strictly larger than the Banach space
\[
	BMO^{\mu, \nu}_b(\Omega) := \left\{ v \in BMO^\mu (\Omega) \bigm| 
	[v]_{b^\nu} < \infty \right\}
\]
equipped with the norm
\[
	\| v \|_{BMO^{\mu, \nu}_b} := [v]_{BMO^\mu} + [v]_{b^\nu}
\]
introduced essentially in \cite{BG}.
 Indeed, in the case when $\Omega$ is the half space $\mathbf{R}^n_+$
\begin{equation} \label{DC}
	vBMO^{\mu, \nu}(\mathbf{R}^n_+) = \left( BMO^\mu(\mathbf{R}^n_+) \right)^{n-1} \times BMO^{\mu, \nu}_b(\mathbf{R}^n_+),
\end{equation}
where in the right-hand side the each space denotes the space of scalar functions not of vector fields.
 This shows that $vBMO^{\mu, \nu}(\mathbf{R}^n_+)$ is strictly larger than $BMO^{\mu, \nu}_b(\mathbf{R}^n_+)$ for $n \geq 2$.

Although there are many exponents, the spaces may be the same for different exponents.
 By definition for $0 <\mu_1 \leq \mu_2 \leq \infty$, $0 < \nu_1 \leq \nu_2 \leq \infty$, $0 < \delta_1 \leq \delta_2 \leq \infty$, 
\[
	[f]_{BMO^{\mu_1}} \leq [f]_{BMO^{\mu_2}}, \quad 
	[f]_{b^{\nu_1}} \leq [f]_{b^{\nu_2}}, \quad 
	[f]_{\Gamma_{\delta_1}} \leq [f]_{\Gamma_{\delta_2}}.
\]
\begin{proposition} \label{E1}
Let $\Omega$ be an arbitrary domain in $\mathbf{R}^n$.
\begin{enumerate}
\item[(i)] Let $0<\mu_1<\mu_2<\infty$.
 Then seminorms $[\cdot]_{BMO^{\mu_1}}$ and $[\cdot]_{BMO^{\mu_2}}$ are equivalent. If $\Omega$ is bounded, one may take $\mu_2 = \infty$.
\item[(i\hspace{-0.1em}i)] Let $0<\delta_1<\delta_2<\infty$ and $\mu \in (0,\infty]$.
 Then there exists a constant $C>0$ depending only on $n$, $\mu$, $\delta_1$, $\delta_2$ and $\Omega$ such that
\[
	[f]_{\Gamma_{\delta_2}} \leq C \left( [f]_{BMO^\mu} + [f]_{\Gamma_{\delta_1}} \right).
\]
In particular, the norms $\| \cdot \|_{bmo_{\delta_1}^{\mu}}$ and $\| \cdot \|_{bmo_{\delta_2}^{\mu}}$ are equivalent. If $\Omega$ is bounded, one may take $\delta_2 = \infty$.
\end{enumerate}
\end{proposition}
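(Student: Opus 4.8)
The plan for both items is to reduce to an elementary \emph{chaining estimate inside a single ball, or along a single segment, contained in $\Omega$}; this is what makes the statements hold for an arbitrary domain, since one never needs to connect two far apart parts of $\Omega$.

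\textbf{Item (i).} The inequality $[f]_{BMO^{\mu_1}}\le[f]_{BMO^{\mu_2}}$ is trivial, the left supremum running over a subfamily of the balls used on the right. For the reverse, fix $B=B_R(x_0)\subset\Omega$ with $R<\mu_2$; if $R<\mu_1$ there is nothing to do, so assume $\mu_1\le R<\mu_2$, in which case $R/\mu_1<\mu_2/\mu_1$ is bounded. Put $\rho:=\mu_1/8$ and $c:=f_{B_\rho(x_0)}$, so that $\frac1{|B|}\int_B|f-f_B|\,dy\le\frac2{|B|}\int_B|f-c|\,dy$, and split $B=D\cup S$ into the core $D:=B_{R-2\rho}(x_0)$ and the shell $S:=B\setminus D$. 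On $D$ I would chain along the radial segment $[x_0,y]$, along which the concentric balls of radius $2\rho<\mu_1$ stay inside $B\subset\Omega$; using the elementary bound $|f_{B'}-f_{B''}|\le(|B''|/|B'|)\,[f]_{BMO^{\mu_1}}$, valid whenever $B'\subset B''\subset\Omega$ with radius of $B''$ below $\mu_1$, a chain of $O(R/\rho)=O(\mu_2/\mu_1)$ such balls yields $|f_{B_\rho(y)}-c|\le C(n,\mu_2/\mu_1)[f]_{BMO^{\mu_1}}$ for $y\in D$, and then a finite overlap covering of $D$ by $O((\mu_2/\mu_1)^n)$ balls $B_\rho(z_\ell)\subset\Omega$ bounds $\int_D|f-c|\,dy$ by $C|B_R|\,[f]_{BMO^{\mu_1}}$. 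The shell is the delicate part: for $y\in S$ the admissible radius $d(y):=R-|y-x_0|$ degenerates, so the radial chain from $x_0$ to $y$ must finish with balls whose radii shrink geometrically toward $\partial B$, giving a chain of length $O\bigl(\mu_2/\mu_1+\log(\rho/d(y))\bigr)$; combined with a Whitney type covering of $S$ adapted to $d(y)$ this yields $\int_S|f-c|\,dy\le C(n)\,[f]_{BMO^{\mu_1}}\int_S\bigl(1+\mu_2/\mu_1+\log(\rho/d(y))\bigr)\,dy$, and since $S$ is an annular shell of thickness $2\rho$ and $\int_0^{2\rho}\log(2\rho/s)\,ds=2\rho<\infty$, the last integral is $\lesssim(1+\mu_2/\mu_1)\rho R^{n-1}\lesssim(1+\mu_2/\mu_1)|B_R|$. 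Adding the two contributions gives $\frac1{|B|}\int_B|f-f_B|\,dy\le C(n,\mu_2/\mu_1)[f]_{BMO^{\mu_1}}$, and taking the supremum over $B$ gives $[f]_{BMO^{\mu_2}}\le C[f]_{BMO^{\mu_1}}$. If $\Omega$ is bounded, every $B_r(x)\subset\Omega$ has $2r\le\operatorname{diam}\Omega$, so $[f]_{BMO^\infty}=[f]_{BMO^{\operatorname{diam}\Omega}}$ and the case $\mu_2=\infty$ follows with a constant depending on $\operatorname{diam}\Omega$, hence on $\Omega$.

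\textbf{Item (ii).} Fix $x\in\mathbf R^n$ and set $A:=B_{r_0}(x)\cap(\Gamma_{\delta_2}\setminus\Gamma_{\delta_1})=B_{r_0}(x)\cap\{\,y\in\Omega:\delta_1\le d_\Omega(y)<\delta_2\,\}$, so $\int_{B_{r_0}(x)\cap\Gamma_{\delta_2}}|f|=\int_{B_{r_0}(x)\cap\Gamma_{\delta_1}}|f|+\int_A|f|\le[f]_{\Gamma_{\delta_1}}+\int_A|f|$, and it remains to control $\int_A|f|$. For $y\in A$ choose a nearest boundary point $p$ of $y$ (which exists since $d_\Omega(y)<\infty$) and set $y':=p+\tfrac{\delta_1}{2}(y-p)/|y-p|$; since $d_\Omega$ is $1$-Lipschitz one checks $d_\Omega\ge\delta_1/2$ along the whole segment $[y',y]$ and $d_\Omega(y')=\delta_1/2$. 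With $\rho:=\min(\mu,\delta_1)/8$ this gives $B_\rho(y')\subset\Gamma_{\delta_1}\cap\Omega$, while $B_{2\rho}(q)\subset\Omega$ for every $q\in[y',y]$ and $B_\rho(y)\subset\Omega$. Writing $|f(y)|\le|f(y)-f_{B_\rho(y)}|+|f_{B_\rho(y)}-f_{B_\rho(y')}|+|f_{B_\rho(y')}|$, a chain of $O((\delta_2-\delta_1)/\rho)$ balls of radius $\le\mu$ along $[y',y]$ bounds the middle term by $C(n,\mu,\delta_1,\delta_2)[f]_{BMO^\mu}$; for the third term, $B_\rho(y')\subset B_{r_0+\delta_2}(x)\cap\Gamma_{\delta_1}$, so $|f_{B_\rho(y')}|\le|B_\rho(y')|^{-1}\int_{B_{r_0+\delta_2}(x)\cap\Gamma_{\delta_1}}|f|\le C(n,\mu,\delta_1,\delta_2,r_0)[f]_{\Gamma_{\delta_1}}$ after covering $B_{r_0+\delta_2}(x)$ by boundedly many balls of radius $r_0$; and integrating the first term over $A\subset B_{r_0}(x)$ against a finite overlap covering by $O((r_0/\rho)^n)$ balls $B_\rho(z_\ell)\subset\Omega$ gives $\int_A|f(y)-f_{B_\rho(y)}|\,dy\le C(n,\mu,\delta_1,\delta_2,r_0)[f]_{BMO^\mu}$. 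Since $|A|\le|B_{r_0}(x)|$ is a fixed constant, integrating over $A$ yields $\int_A|f|\le C(n,\mu,\delta_1,\delta_2,r_0)([f]_{BMO^\mu}+[f]_{\Gamma_{\delta_1}})$; taking the supremum over $x$ gives the asserted estimate, and the equivalence of $\|\cdot\|_{bmo^\mu_{\delta_1}}$ and $\|\cdot\|_{bmo^\mu_{\delta_2}}$ follows (the opposite bound being trivial). If $\Omega$ is bounded then $\Gamma_{\delta'}=\Omega$ for every $\delta'>\operatorname{diam}\Omega$, so the case $\delta_2=\infty$ reduces to the finite case with $\delta_2$ replaced by such a $\delta'$.

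\textbf{Main obstacle.} The essential difficulty is the shell analysis in (i): quantifying how fast the admissible ball radii degenerate near $\partial B_R(x_0)$, organizing the corresponding Whitney type covering together with radial chains of logarithmically growing length, and verifying that the resulting volume weighted sum still converges to a quantity $\lesssim|B_R|$. Everything else is bookkeeping of covering multiplicities and chain lengths, all controlled by $n$ and the ratios $\mu_2/\mu_1$, $\delta_2/\delta_1$, $\min(\mu,\delta_1)/\delta_1$, $r_0$ (and, in the two unbounded exponent cases, by $\operatorname{diam}\Omega$).
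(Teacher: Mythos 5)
Your argument is correct, but it follows a genuinely different route from the paper, in both halves. For (i) the paper gives no proof at all: it simply cites \cite[Theorems 3, 4]{BGST}, whereas you supply a self-contained chaining proof inside a single ball $B_R(x_0)\subset\Omega$, with the only delicate point being the shell near $\partial B_R(x_0)$, which you handle correctly by radial chains of geometrically shrinking balls of length $O(\mu_2/\mu_1+\log(\rho/d(y)))$ together with a Whitney covering and the integrability of $\log(\rho/d(y))$ over the shell; this is standard but sound, and the constant $C(n,\mu_2/\mu_1)$ you obtain is of the expected form. For (ii) the paper covers $B_{r_0}(x)\cap(\Gamma_{\delta_2}\setminus\Gamma_{\delta_1})$ by dyadic cubes of side comparable to $\delta_1$, estimates the cubes meeting the level set $\{d_\Omega=\delta_1\}$ via a ball $B_{\delta_1}(c_j)\subset\Omega$ reaching into $\Gamma_{\delta_1}$, and then propagates the bound cube by adjacent cube, invoking connectedness of $B_{r_0}(x)\cap(\Gamma_{\delta_2}\setminus\Gamma_{\delta_1})$ and treating $n=1$ separately; you instead estimate $|f(y)|$ pointwise by chaining balls of radius $\rho=\min(\mu,\delta_1)/8$ along the segment from $y$ toward its nearest boundary point down to the point $y'$ at depth $\delta_1/2$ (where indeed $d_\Omega\equiv t$ along that segment, so $B_\rho(y')\subset\Gamma_{\delta_1}$ and all chain balls stay in $\Omega$), and then integrate over $y$. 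Your per-point construction buys two real advantages: it needs no connectivity of the intersected region (a hypothesis the paper's chain-of-cubes step actually leans on and which can fail in general), and it treats all dimensions and all $\mu\in(0,\infty]$ uniformly, since your choice $\rho\le\mu/8$ avoids even the implicit appeal to part (i) that the paper makes when it tests balls of radius $\delta_1$ against $[f]_{BMO^\mu}$. The only blemishes are cosmetic: the inclusion should read $B_\rho(y')\subset B_{r_0+\delta_2+\rho}(x)$, and the chain length along $[y',y]$ is $O\bigl((\delta_2-\delta_1/2)/\rho\bigr)$; neither affects the dependence of the constant on $n,\mu,\delta_1,\delta_2,r_0$.
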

\begin{proof}
(i) This is \cite[Theorem 4]{BGST} which follows from \cite[Theorem 3]{BGST}.

(i\hspace{-0.1em}i) Since the space $L_{\mathrm{ul}}^1 (\Gamma_\delta)$ is independent of the radius $r_0$ in its definition, without loss of generality, we may assume that $r_0 > \delta_{1}$. Let us firstly consider the case where the dimension $n > 1$. Let $k$ be the smallest integer such that $2^{-k} < \frac{\delta_{1}}{\sqrt{n}}$ and $x \in \mathbf{R}^{n}$. Notice that
\begin{eqnarray*}
\int_{B_{r_0}(x) \cap \Gamma_{\delta_2}} |f| \, dy = \int_{B_{r_0}(x) \cap \Gamma_{\delta_1}} |f| \, dy + \int_{B_{r_0}(x) \cap ( \Gamma_{\delta_2} \setminus \Gamma_{\delta_1} )} |f| \, dy,
\end{eqnarray*}
we can estimate $\| f \|_{L^1(B_{r_0}(x) \cap \Gamma_{\delta_1})}$ directly by $[f]_{\Gamma_{\delta_1}}$. Assume that $\Gamma_{\delta_2} \setminus \Gamma_{\delta_1} \neq \emptyset$. Let $D_{k}(x)$ be the set of dyadic cubes of side length $2^{-k}$ that intersect with $B_{r_0}(x) \cap ( \Gamma_{\delta_2} \setminus \Gamma_{\delta_1} )$. For a dyadic cube $Q_j \in D_{k}(x)$, we define $B_j$ to be the ball which has radius $\frac{\sqrt{n}}{2} \cdot 2^{-k}$ and shares the same center with $Q_j$. Let $C_{k}(x) := \{ B_j \, | \, Q_j \in D_{k}(x) \}$ and $\Sigma := \{ x \in \Omega \, | \, d_{\Omega}(x) = \delta_1 \}$. 

For $Q_j \in D_{k}(x)$ that intersects $\Sigma$, we seek to estimate $\| f \|_{L^1(B_j)}$. Let $c_j$ be a point on $\Sigma \cap Q_j$, we have that $B_{\delta_1} (c_j) \subset \Omega$. Indeed as otherwise, there exists $z \in B_{\delta_1}(c_j) \cap \Omega^{c}$. Then the line segment joining $c_j$ and $z$ must intersect $\Gamma$ at some point, say $z^{\ast}$. Then $| z^{\ast} - c_j | \leq | z - c_j | < \delta_1$. This contradicts the fact that $d_{\Omega}(c_j) = \delta_1$. For $y \in B_j$, $|y - c_j | < \sqrt{n} \cdot \ell(Q_j) = \sqrt{n} \cdot 2^{-k} < \delta_1$. So $B_j \subset B_{\delta_1}(c_j)$. Let $d_j \in \Gamma$ be a point such that $|c_j - d_j | = \delta_1$, then on the line segment joining $c_j$ and $d_j$, we can find a point $o_j$ such that $| o_j - d_j | = \frac{\sqrt{n}}{2} \cdot 2^{-k}$. For $y \in B_{\frac{\sqrt{n}}{2} \cdot 2^{-k}}(o_j)$, we have that $| d_{\Omega}(y) - d_{\Omega}(o_j) | \leq | y - o_j |$. Hence $d_{\Omega}(y) \leq d_{\Omega}(o_j) + | y - o_j | < \sqrt{n} \cdot 2^{-k} < \delta_1$. This means that $B_{\frac{\sqrt{n}}{2} \cdot 2^{-k}}(o_j) \subset \Gamma_{\delta_1}$. Moreover,
\begin{eqnarray*}
|c_j - y| \leq |c_j - o_j| + |o_j - y| \leq \delta_1 - \frac{\sqrt{n}}{2} \cdot 2^{-k} + \frac{\sqrt{n}}{2} \cdot 2^{-k} = \delta_1.
\end{eqnarray*}
Thus $B_{\frac{\sqrt{n}}{2} \cdot 2^{-k}}(o_j) \subset B_{\delta_1}(c_j)$. Denote $B_{\frac{\sqrt{n}}{2} \cdot 2^{-k}}(o_j)$ by $B_j^{\ast}$. We have that
\begin{eqnarray*}
\int_{B_j} |f| \, dy \leq \int_{B_{\delta_1}(c_j)} |f - f_{B_{\delta_1}(c_j)}| \, dy + \int_{B_{\delta_1}(c_j)} | f_{B_{\delta_1}(c_j)} - f_{B_j^{\ast}}| \, dy + \int_{B_{\delta_1}(c_j)} |f_{B_j^{\ast}}| \, dy.
\end{eqnarray*}
Notice that
\begin{eqnarray*}
\int_{B_{\delta_1}(c_j)} |f - f_{B_{\delta_1}(c_j)}| \, dy &\leq& C_n \cdot \delta_1^n \cdot [f]_{BMO^{\mu}}, \\
\int_{B_{\delta_1}(c_j)} | f_{B_{\delta_1}(c_j)} - f_{B_j^{\ast}} | \, dy &\leq& \frac{|B_{\delta_1}(c_j)|^2}{|B_j^{\ast}|} \cdot [f]_{BMO^{\mu}}, \\
\int_{B_{\delta_1}(c_j)} |f_{B_j^{\ast}}| \, dy &\leq& \frac{|B_{\delta_1}(c_j)|}{|B_j^{\ast}|} \cdot [f]_{\delta_1}.
\end{eqnarray*}
Since $|B_{\delta_1}(c_j)| = C_n \cdot \delta_1^{n}$ and $\frac{|B_{\delta_1}(c_j)|}{|B_j^{\ast}|} = \frac{C_n \cdot \delta_1^n}{(\frac{\sqrt{n}}{2} \cdot 2^{-k})^n} \leq \frac{C_n \cdot \delta_1^n}{(\frac{\delta_1}{4})^n} = C_n$, $\| f \|_{L^{1}(B_j)}$ is therefore controlled by $C_{\delta_1,n} \cdot \left( [f]_{BMO^{\mu}} + [f]_{\Gamma_{\delta_1}} \right)$.

Next we consider $Q_j^{'} \in D_k(x)$ that does not intersect $\Sigma$. Suppose that $Q_j \in D_k(x)$ has a touching edge with $Q_j^{'}$. There exists a ball $B_i^j$ of radius $\frac{\sqrt{n} - 1}{2} \cdot 2^{-k}$ which is contained in $B_j \cap B_j^{'}$ where $B_j, B_j^{'}$ are the smallest balls that contain $Q_j, Q_j^{'}$ respectively. Similar to above, as $B_i^j \subset B_j$,
\begin{eqnarray*}
\int_{B_j^{'}} |f| \, dy &\leq& \int_{B_j^{'}} |f - f_{B_j^{'}}| \, dy + \int_{B_j^{'}} |f_{B_j^{'}} - f_{B_i^j}| \, dy + \int_{B_j^{'}} |f_{B_i^j}| \, dy \\
&\leq& |B_j^{'}| \cdot [f]_{BMO^{\mu}} + \frac{|B_j^{'}|^2}{|B_i^j|} \cdot [f]_{BMO^{\mu}} + \frac{|B_j^{'}|}{|B_i^j|} \cdot \int_{B_j} |f| \, dy.
\end{eqnarray*}
Therefore if $\| f \|_{L^{1}(B_j)}$ is controlled by $C_{\delta_1,n} \cdot \left( [f]_{BMO^{\mu}} + [f]_{\Gamma_{\delta_1}} \right)$, $\| f \|_{L^{1}(B_j^{'})}$ is also controlled by $C_{\delta_1,n} \cdot \left( [f]_{BMO^{\mu}} + [f]_{\Gamma_{\delta_1}} \right)$.

Since $B_{r_0}(x) \cap ( \Gamma_{\delta_2} \backslash \Gamma_{\delta_1} )$ is connected, we can estimate $\| f \|_{L^{1}(B_j)}$ for every $Q_j \in D_k(x)$ where $B_j$ is the smallest ball that contains $Q_j$. For each $Q_j \in D_{k}(x)$, there exists $y \in Q_j \cap B_{r_0}(x)$, so for any $z \in B_j$, $|z -x| \leq |z - y| + |y - x| < \sqrt{n} \cdot 2^{-k} + r_0 < r_0 + \delta_1$. Thus $\underset{Q_j \in D_{k}(x)}{\bigcup} B_j \subset B_{r_0+\delta_1} (x)$. Let $N(D_{k}(x))$ be the number of cubes in $D_{k}(x)$, we have that
\begin{eqnarray*}
N(D_{k}(x)) \leq \frac{|B_{r_0+\delta_1}(x)|}{2^{-kn}} \leq C_n \cdot \left(\frac{r_0+\delta_1}{\delta_1}\right)^n.
\end{eqnarray*}
Therefore,
\begin{eqnarray*}
\int_{B_{r_0}(x) \cap (\Gamma_{\delta_2} \setminus \Gamma_{\delta_1})} |f| \, dy &\leq& \underset{B_j \in C_{r_0}(x)}{\sum} \int_{B_j} |f| \, dy \\
&\leq& N(D_{k}(x)) \cdot C_{\delta_1,n} \cdot \left( [f]_{BMO^{\mu}} + [f]_{\Gamma_{\delta_1}} \right) \\
&\leq& C_{n,\delta_1,r_0} \cdot \left( [f]_{BMO^{\mu}} + [f]_{\Gamma_{\delta_1}} \right).
\end{eqnarray*}

For the case where the dimension $n = 1$, we let $k$ to be the smallest integer such that $2^{-k} < \frac{\delta_1}{2}$ and $D_k$ to be the set of dyadic cubes of side length $2^{-k}$ that intersects $\Gamma_{\delta_2} \backslash \Gamma_{\delta_1}$. Notice that the region $\Gamma_{\delta_2} \backslash \Gamma_{\delta_1}$ is indeed a union of intervals. Without loss of generality, we can assume $\Omega$ to be $(0,\infty)$ and take $\mu = \infty$ by part (i) of this proposition. Thus in this case $\Gamma_{\delta_2} \backslash \Gamma_{\delta_1} = (\delta_1,\delta_2)$. For $Q_0 \in D_k$ such that $\delta_1 \in Q_0$,
\begin{align*}
\int_{Q_0} |f| \, dy &\leq \int_{2Q_0} |f| \, dy \leq \int_{2Q_0} | f - f_{2Q_0} | \, dy + \int_{2Q_0} | f_{2Q_0} - f_{Q_0^{\ast}} | \, dy + \int_{2Q_0} | f_{Q_0^{\ast}} | \, dy \\
&\leq C \cdot \left( [f]_{BMO^\infty} + [f]_{\Gamma_{\delta_1}} \right),
\end{align*}
where $Q_0^{\ast} = 2Q_0 \backslash \left( Q_0 \cup [\delta_1,\infty) \right)$ and $\ell(Q_0^{\ast}) = \frac{1}{2} \ell(Q_0) = 2^{-(k+1)}$. 

We then put an ordering on the elements of $D_k$ in the following way. For $j \in \mathbf{N}$, suppose that we have ordered intervals $Q_0, Q_1, ... , Q_{j-1}$, we pick $Q_j \in D_k \backslash \{Q_0, Q_1, ... , Q_{j-1} \}$ such that $Q_j$ has a touching edge with $Q_{j-1}$. For $Q_j \in D_k$, similarly we have that
\begin{align*}
\int_{Q_j} |f| \, dy &\leq \int_{2Q_j} |f| \, dy \leq \int_{2Q_j} | f - f_{2Q_j} | \, dy + \int_{2Q_j} | f_{2Q_j} - f_{Q_j^{\ast}} | \, dy + \int_{2Q_j} | f_{Q_j^{\ast}} | \, dy \\
&\leq C \cdot \left( [f]_{BMO^\infty} + [f]_{\Gamma_{\delta_1}} \right),
\end{align*}
where $Q_j^{\ast} = 2Q_{j-1} \cap 2Q_j$ and $\ell(Q_j^{\ast}) = \ell(Q_j) = 2^{-k}$.

Let $N(D_k)$ be the number of elements of $D_k$, we have that
\[
N(D_k) \leq \frac{\delta_2 - \delta_1}{2^{-k}} +2 \leq \frac{4(\delta_2 - \delta_1)}{\delta_1} +2
\]
and therefore
\[
\int_{\Gamma_{\delta_2} \backslash \Gamma_{\delta_1}} |f| \, dy \leq C_{\delta_2,\delta_1} \cdot \left( [f]_{BMO^{\mu}} + [f]_{\Gamma_{\delta_1}} \right).
\]
The proof is now complete.
\end{proof}

By this observation when we discuss the space $bmo^\mu_\delta$ there are only four types of spaces
\[
	bmo^\mu_\delta, \quad bmo^\infty_\delta, \quad bmo^\mu_\infty, \quad bmo^\infty_\infty
\]
for finite $\mu,\delta>0$.
 If $\Omega$ is bounded, it is clear that these four spaces agree with each other.
 However, if $\Omega$ is unbounded, there four spaces may be different because they requires different growth at infinity.
 Indeed, if $\Omega=(0,\infty)$
\[
	bmo^\infty_\infty \subsetneq bmo^\infty_\delta
\]
since $\log(x+1) \in bmo^\infty_\delta$ while it does not belong to $bmo^\infty_\infty$.
 Moreover, since $x \in bmo^\mu_\delta$ but it does not belong to neither $bmo^\mu_\infty$ nor $bmo^\infty_\delta$, we see that
\[
	bmo^\infty_\delta \subsetneq bmo^\mu_\delta, \quad
	bmo^\mu_\infty \subsetneq bmo^\mu_\delta.
\]
It is possible to prove that $bmo^\infty_\infty = bmo^\mu_\infty$. Indeed as $bmo_{\infty}^{\infty}(\Omega) \subset bmo_{\infty}^{\mu}(\Omega)$ is simply by the definition of the $BMO$ seminorm. It is sufficient to show the contrary, i.e. $[f]_{BMO^{\infty}} \leq C \cdot ( [f]_{BMO^{\mu}} + [f]_{\Gamma_{\infty}} )$. Without loss of generality, in defining the seminorm $[ \, \cdot \, ]_{L_{\mathrm{ul}}^1(\Gamma_{\infty})}$, we set the radius of the ball to be $\frac{\sqrt{n}}{2}$. For $B_{r}(x) \subset \Omega$ with $r< \mu$,
\begin{eqnarray*}
\frac{1}{|B_{r}(x)|} \int_{B_{r}(x)} | f - f_{B_{r}(x)} | \, dy \leq [f]_{BMO^{\mu}}.
\end{eqnarray*}
For $B_{r}(x) \subset \Omega$ with $r \geq \mu$, if $r \leq \frac{\sqrt{n}}{2}$, then
\begin{eqnarray*}
\frac{1}{|B_{r}(x)|} \int_{B_{r}(x)} | f - f_{B_{r}(x)} | \, dy \leq \frac{2}{|B_{r}(x)|} \int_{B_{\frac{\sqrt{n}}{2}}(x) \cap \Omega} |f| \, dy \leq C_{\mu,n} \cdot [f]_{\Gamma_{\infty}}.
\end{eqnarray*}
If $r > \frac{\sqrt{n}}{2}$, $B_{r}(x)$ is contained in the cube $Q_r$ with center $x$ and side length $2([r]+1)$, here $[r]$ is the largest integer less than or equal to $r$. By dividing each side length of $Q_r$ equally into $2([r]+1)$ parts, we can divide the cube $Q_r$ into $(2[r]+2)^n$ subcubes of side length $1$. Let $S_{Q_r}$ be the set of these $(2[r]+2)^n$ subcubes of $Q_r$. For $Q_r^i \in S_{Q_r}$, let $B_r^i$ be the smallest ball that contains $Q_r^i$. Let $C_{Q_r} := \{ B_r^i \; | \; Q_r^i \in S_{Q_r} \}$. We have that
\begin{eqnarray*}
\int_{B_{r}(x)} |f| \, dy \leq \sum_{i=1}^{(2[r]+2)^n} \int_{B_r^i \cap \Omega} |f| \, dy \leq (2[r]+2)^n \cdot [f]_{\Gamma_{\infty}}.
\end{eqnarray*}
Since $r \geq \mu$, 
\begin{eqnarray*}
\frac{1}{|B_r(x)|} \int_{B_r(x)} | f - f_{B_r(x)} | \, dy \leq \frac{2}{|B_r(x)|} \int_{B_r(x)} |f| \, dy \leq C_{\mu,n} \cdot [f]_{\Gamma_{\infty}}.
\end{eqnarray*}
Therefore $bmo_{\infty}^{\infty} = bmo_{\infty}^{\mu}$ and thus $bmo^\mu_\infty \subsetneq bmo^\infty_\mu$.

We summarize these equivalences.
\begin{theorem} \label{B1}
Let $\Omega$ be an arbitrary domain in $\mathbf{R}^n$. Then
\[
bmo_{\infty}^{\infty}(\Omega) = bmo_{\infty}^{\mu}(\Omega) \subset bmo_{\delta}^{\infty}(\Omega) \subset bmo_{\delta}^{\mu}(\Omega)
\]
for finite $\delta, \mu > 0$. The inclusions can be strict when $\Omega$ is unbounded. If $\Omega$ is bounded, all four spaces are the same.
\end{theorem}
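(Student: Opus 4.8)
The plan is to assemble the statement from the elementary monotonicity of the three seminorms together with the two non-trivial facts already available in this section: the equality $bmo^\infty_\infty(\Omega)=bmo^\mu_\infty(\Omega)$ established in the paragraph just before the theorem, and Proposition \ref{E1}. Indeed, the dyadic-cube covering argument there gives $[f]_{BMO^\infty}\le C\bigl([f]_{BMO^\mu}+[f]_{\Gamma_\infty}\bigr)$ (the reverse bound being trivial), which yields the identity $bmo^\infty_\infty=bmo^\mu_\infty$. Granting this, the two inclusions are purely formal: since $\Gamma_\delta\subset\Omega=\Gamma_\infty$ we have $[f]_{\Gamma_\delta}\le[f]_{\Gamma_\infty}$, whence $\|f\|_{bmo^\infty_\delta}=[f]_{BMO^\infty}+[f]_{\Gamma_\delta}\le\|f\|_{bmo^\infty_\infty}$ and so $bmo^\infty_\infty\subset bmo^\infty_\delta$; and since $[f]_{BMO^\mu}\le[f]_{BMO^\infty}$ we have $\|f\|_{bmo^\mu_\delta}\le\|f\|_{bmo^\infty_\delta}$ and so $bmo^\infty_\delta\subset bmo^\mu_\delta$.

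For strictness when $\Omega$ is unbounded I would take $\Omega=(0,\infty)$ and reuse the two functions mentioned earlier in the section. The function $f(x)=\log(x+1)$ is a $BMO$ function on $(0,\infty)$ and is integrable over the bounded set $\Gamma_\delta=(0,\delta)$, hence $f\in bmo^\infty_\delta$; but $\int_{B_{r_0}(y)\cap(0,\infty)}\log(x+1)\,dx\to\infty$ as $y\to\infty$, hence $f\notin bmo^\infty_\infty=bmo^\mu_\infty$, so the first inclusion can be strict. The function $f(x)=x$ has mean oscillation $O(\mu)$ over intervals of length $<\mu$ and is integrable over $(0,\delta)$, hence $f\in bmo^\mu_\delta$; but its mean oscillation over long intervals is unbounded, i.e.\ $[f]_{BMO^\infty}=\infty$, hence $f\notin bmo^\infty_\delta$, so the second inclusion can be strict.

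For the bounded case: if $\Omega$ is bounded then $d_\Omega$ is bounded on $\Omega$, so $\Gamma_\delta=\Omega$ for $\delta$ exceeding the inradius, and in any case Proposition \ref{E1}(ii) with $\delta_2=\infty$ gives $bmo^\mu_\delta(\Omega)=bmo^\mu_\infty(\Omega)$ for every finite $\delta>0$ (the statement permits $\mu\in(0,\infty]$, so this covers $\mu=\infty$ as well). Proposition \ref{E1}(i) with $\mu_2=\infty$ gives $bmo^\mu_\delta(\Omega)=bmo^\infty_\delta(\Omega)$. Chaining these equalities with the one from the first paragraph shows all four spaces coincide.

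The only ingredient requiring genuine work is the estimate $[f]_{BMO^\infty}\lesssim[f]_{BMO^\mu}+[f]_{\Gamma_\infty}$ underlying $bmo^\mu_\infty=bmo^\infty_\infty$, and it has already been carried out above; the remainder of the argument is the two one-dimensional example verifications plus routine bookkeeping, so I do not expect a serious obstacle here.
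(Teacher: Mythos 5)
Your proposal is correct and follows essentially the same route as the paper: the inclusions come from the monotonicity of the seminorms, the identity $bmo^\infty_\infty=bmo^\mu_\infty$ rests on the covering estimate $[f]_{BMO^\infty}\leq C\bigl([f]_{BMO^\mu}+[f]_{\Gamma_\infty}\bigr)$ worked out just before the theorem, strictness is witnessed by $\log(x+1)$ and $x$ on $(0,\infty)$, and the bounded case follows from Proposition \ref{E1} with $\mu_2=\infty$ and $\delta_2=\infty$. No gaps.
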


As a simple application of Proposition \ref{E1}, we conclude that the space $BMO^{\mu, \nu}_b$ is included in $bmo^\mu_\nu$ since $[f]_\nu \leq c[f]_{b^\nu}$ ($\nu<\infty$) with $c>0$ depending on $\nu$.
\begin{theorem} \label{B2}
Let $\Omega$ be an arbitrary domain in $\mathbf{R}^n$.
 For $\mu \in (0,\infty]$ the inclusion
\[
	BMO^{\mu, \nu}_b(\Omega) \subset bmo^\mu_\nu(\Omega)
\]
holds for $\nu \in (0,\infty)$.
\end{theorem}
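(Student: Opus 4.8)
The plan is to prove the stronger quantitative statement $\|f\|_{bmo^\mu_\nu(\Omega)}\le C\|f\|_{BMO^{\mu,\nu}_b(\Omega)}$; since both norms contain the term $[f]_{BMO^\mu}$, everything reduces to bounding the uniformly local term $[f]_{\Gamma_\nu}=\|f\|_{L^1_{\mathrm{ul}}(\Gamma_\nu)}$ by $[f]_{BMO^\mu}+[f]_{b^\nu}$. I expect the main obstacle to be that one cannot cover $B_{r_0}(x)\cap\Gamma_\nu$ by a bounded number of balls centred on $\Gamma$ of radius strictly smaller than $\nu$: a point $y\in\Gamma_\nu$ with $d_\Omega(y)$ close to $\nu$ sits at distance almost $\nu$ from every point of $\Gamma$, so $[f]_{b^\nu}$ alone cannot see it. To get around this I would first control $[f]_{\Gamma_{\nu/2}}$ using only $[f]_{b^\nu}$, and then pass from the half-size tube $\Gamma_{\nu/2}$ to the full tube $\Gamma_\nu$ by Proposition \ref{E1}(ii); this last step is precisely where the hypothesis $\nu<\infty$ is used, since Proposition \ref{E1}(ii) is stated for finite radii.

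For the first step, fix $x\in\mathbf{R}^n$ and assume $B_{r_0}(x)\cap\Gamma_{\nu/2}\neq\emptyset$. For each $y$ in this set pick $z_y\in\Gamma$ with $|y-z_y|<\nu/2$, which is possible because $d_\Omega(y)<\nu/2$; then $z_y\in\Gamma\cap B_{r_0+\nu/2}(x)$. Choose a maximal $(\nu/4)$-separated subset $\{z_1,\dots,z_M\}$ of $\Gamma\cap B_{r_0+\nu/2}(x)$. Comparing the volumes of the pairwise disjoint balls $B_{\nu/8}(z_i)$, all contained in $B_{r_0+\nu}(x)$, gives $M\le C_n(1+r_0/\nu)^n$, while maximality forces every $z_y$ to lie within $\nu/4$ of some $z_i$, so that $|y-z_i|<\nu/2+\nu/4=3\nu/4<\nu$. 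Hence $B_{r_0}(x)\cap\Gamma_{\nu/2}\subset\bigcup_{i=1}^M\bigl(\Omega\cap B_{3\nu/4}(z_i)\bigr)$, and since $3\nu/4<\nu$ and $z_i\in\Gamma$ the definition of $[f]_{b^\nu}$ gives $\int_{\Omega\cap B_{3\nu/4}(z_i)}|f|\,dy\le(3\nu/4)^n[f]_{b^\nu}$. Summing over $i$ and taking the supremum over $x\in\mathbf{R}^n$ yields $[f]_{\Gamma_{\nu/2}}\le C_{n,r_0,\nu}\,[f]_{b^\nu}$.

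For the second step I would apply Proposition \ref{E1}(ii) with $\delta_1=\nu/2$ and $\delta_2=\nu$ (both finite since $\nu<\infty$), obtaining a constant $C$, depending only on $n,\mu,\nu$ and $\Omega$, with $[f]_{\Gamma_\nu}\le C\bigl([f]_{BMO^\mu}+[f]_{\Gamma_{\nu/2}}\bigr)$. Combining the two steps, $\|f\|_{bmo^\mu_\nu}=[f]_{BMO^\mu}+[f]_{\Gamma_\nu}\le C\bigl([f]_{BMO^\mu}+[f]_{b^\nu}\bigr)=C\|f\|_{BMO^{\mu,\nu}_b}$; in particular $f\in BMO^{\mu,\nu}_b(\Omega)$ implies $f\in BMO^\mu(\Omega)\cap L^1_{\mathrm{ul}}(\Gamma_\nu)=bmo^\mu_\nu(\Omega)$, which is the asserted inclusion (and the same estimate applies componentwise to vector fields). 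The argument is elementary once the two-scale structure is in place; the only mildly delicate point is the covering count in the first step, which is settled by the choice of the net at scale $\nu/4$.
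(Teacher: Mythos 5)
Your proposal is correct and follows essentially the same route as the paper, which dispatches this theorem in one line by asserting that the uniformly local $L^1$ norm on the boundary tube is controlled by $c[f]_{b^\nu}$ and then invoking Proposition \ref{E1}. Your two-scale refinement --- first bounding $[f]_{\Gamma_{\nu/2}}$ by $[f]_{b^\nu}$ via the $\nu/4$-net covering, then passing from $\Gamma_{\nu/2}$ to $\Gamma_\nu$ with Proposition \ref{E1}(ii) at the cost of $[f]_{BMO^\mu}$ --- is in fact the careful way to implement that one-liner, since (as you observe) points of depth close to $\nu$ cannot be covered by a bounded number of boundary-centered balls of radius strictly less than $\nu$, so the $BMO^\mu$ term really is needed to reach the full tube.
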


Since $b^{\nu}$-seminorm controls boundary growth stronger than $L^1$ sense, this inclusion is in general strict even when $\Omega$ is bounded. Here is a simple example when $\Omega=(0,1)$. The $b^\nu$-seminorm of $f(x)=\log x$ is infinite but $\| f \|_{L^1(\Omega)}$ is finite.

We next discuss the space $vbmo^{\mu, \nu}_\delta$.
\begin{remark} \label{MY}
As proved in \cite[Theorem 9]{BGST}, if $\Omega$ is a bounded Lipschitz domain, the space $BMO_b^{\mu,\nu}$ $(\mu,\nu \in (0,\infty])$ agrees with the Miyachi $BMO$ space \cite{Miy} defined by
\begin{align*}
BMO^M(\Omega) &= \left\{ f \in L_{\mathrm{loc}}^1(\Omega) \; \middle| \; \| f \|_{BMO^M} < \infty \right\}, \\
\| f \|_{BMO^M} &:= [f]_{BMO^M} + [f]_{b^M}, \\
[f]_{BMO^M} &:= \mathrm{sup} \left\{ \frac{1}{|B_r(x)|} \int_{B_r(x)} | f - f_{B_r(x)} | \, dy \; \, \middle| \; B_{2r}(x) \subset \Omega \right\}, \\
[f]_{b^M} &:= \mathrm{sup} \left\{ \frac{1}{|B_r(x)|} \int_{B_r(x)} |f| \, dy \; \, \middle| \; B_{2r}(x) \subset \Omega \; \text{ and } \, B_{5r}(x) \cap \Omega^c \neq \emptyset \right\}.
\end{align*}
\end{remark}
\begin{proposition} \label{E2}
Let $\Omega$ be an arbitrary domain in $\mathbf{R}^n$.
 Let $0<\nu_1\leq \nu_2 \leq \delta \leq \infty$.
 Then there exists a constant $c>0$ depending only on $n$, $\nu_1$, $\nu_2$, $\delta$ such that
\[
	[\nabla d_\Omega \cdot v]_{b^{\nu_2}} \leq 
	[\nabla d_\Omega \cdot v]_{b^{\nu_1}} + c[v]_{\Gamma_\delta} 
\]
for all $v \in L^1_{\mathrm{ul}}(\Gamma_\delta)$.
\end{proposition}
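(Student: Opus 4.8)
The plan is to estimate the supremum defining $[\nabla d_\Omega\cdot v]_{b^{\nu_2}}$ by splitting the admissible radii at $\nu_1$. The only property of $\nabla d_\Omega$ I shall use is the pointwise bound $|\nabla d_\Omega\cdot v|\le|v|$ a.e.\ in $\Omega$, valid because $d_\Omega$ is $1$-Lipschitz, hence differentiable a.e.\ with $|\nabla d_\Omega|\le 1$. Also, by the argument already used in the proof of Proposition~\ref{E1}(i\hspace{-0.1em}i), we may without loss of generality fix the radius in the definition of $L^1_{\mathrm{ul}}(\Gamma_\delta)$ to be $r_0=\nu_1$, since the resulting norms for different choices of $r_0$ are mutually equivalent with constants depending only on $n$ and the ratio of the radii.

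Fix $x\in\Gamma$ and $0<r<\nu_2$. If $0<r<\nu_1$, then directly from the definition,
\[
	r^{-n}\int_{\Omega\cap B_r(x)}|\nabla d_\Omega\cdot v|\,dy\le[\nabla d_\Omega\cdot v]_{b^{\nu_1}},
\]
which contributes the first term on the right-hand side. If instead $\nu_1\le r<\nu_2$, the key observation is that $B_r(x)\cap\Omega\subset\Gamma_\delta$: every $y\in B_r(x)\cap\Omega$ satisfies $d_\Omega(y)\le|y-x|<r<\nu_2\le\delta$, and this is precisely where the hypothesis $\nu_2\le\delta$ enters. Hence
\[
	\int_{\Omega\cap B_r(x)}|\nabla d_\Omega\cdot v|\,dy\le\int_{\Gamma_\delta\cap B_r(x)}|v|\,dy,
\]
so only the restriction of $v$ to $\Gamma_\delta$ matters. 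A standard covering argument bounds $B_r(x)$ by $N\le C(n)(1+r/\nu_1)^n$ balls of radius $\nu_1$, giving $\int_{\Gamma_\delta\cap B_r(x)}|v|\,dy\le N[v]_{\Gamma_\delta}$ and therefore, using $r\ge\nu_1$,
\[
	r^{-n}\int_{\Omega\cap B_r(x)}|\nabla d_\Omega\cdot v|\,dy\le r^{-n}N[v]_{\Gamma_\delta}\le C(n)\,2^n\nu_1^{-n}[v]_{\Gamma_\delta}=:c\,[v]_{\Gamma_\delta}.
\]
This last estimate holds for every $r\in[\nu_1,\nu_2)$, whether $\nu_2$ is finite or $\nu_2=\infty$ (the latter forcing $\delta=\infty$), because the factor $r^{-n}$ absorbs the growth of $N$. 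Taking the supremum over $x\in\Gamma$ and over $0<r<\nu_2$ and combining the two ranges of $r$ yields the claimed inequality.

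I do not anticipate a genuine difficulty here. The points that need care are: recording the pointwise bound $|\nabla d_\Omega\cdot v|\le|v|$, which is what lets us pass from $\nabla d_\Omega\cdot v$ on $\Omega$ to $v$ on $\Gamma_\delta$; using the hypothesis $\nu_2\le\delta$ to secure $B_r(x)\cap\Omega\subset\Gamma_\delta$; and choosing the auxiliary radius $r_0$ so that the final constant depends only on $n,\nu_1,\nu_2,\delta$. Everything else is a routine covering estimate of the type already carried out in Proposition~\ref{E1}.
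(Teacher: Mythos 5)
Your proof is correct and follows essentially the same route as the paper: split the radii at $\nu_1$, use $|\nabla d_\Omega|\le 1$ and the hypothesis $\nu_2\le\delta$ to reduce to $v$ on $\Gamma_\delta$, and then a covering argument at scale $\nu_1$ to bound by $[v]_{\Gamma_\delta}$. Your uniform-in-$r$ ball covering neatly merges the paper's separate treatment of the case $\nu_2=\delta=\infty$ (done there by dyadic subdivision of cubes), but this is only a cosmetic difference.
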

\begin{proof}
We may assume that $\nu_1<\infty$.
 Let $Q_r(x)$ denote a cube centered at $x$ with side length $2r$,
 Since $|\nabla d|=1$ and $B_r(x)\subset Q_r(x)$, we see that
\begin{align*}
	[\nabla d_\Omega \cdot v]_{b^{\nu_2}} - [\nabla d_\Omega \cdot v]_{b^{\nu_1}}
	& \leq \sup \left\{ \frac{1}{r^n} \int_{B_r(x)\cap\Omega} |\nabla d_\Omega \cdot v| \, dy \biggm| x \in \partial\Omega,\ \nu_1\leq r < \nu_2 \right\} \\
	& \leq \sup \left\{ \frac{1}{r^n} \int_{Q_r(x)} |\tilde{v}| \, dy \biggm| x \in \partial\Omega,\ \nu_1\leq r \leq \nu_2 \right\}
\end{align*}
where $\tilde{v}$ denotes the zero extension of $v$ to $\mathbf{R}^n$.
Since $\nu_2\leq\delta$ so that $Q_r(x) \cap \Omega \subset \Gamma_\delta$, we see that
\[
	\sup_{x \in \partial\Omega} \int_{Q_r(x)} |\tilde{v}| \, dy
	\leq \| v \|_{L^1_{\mathrm{ul}}(\Gamma_\delta)}
	\quad\text{for}\quad \nu_1 \leq r \leq \nu_2
\]
provided that $\nu_2$ is finite by taking an equivalent norm of $L^1_{\mathrm{ul}}$; in fact we take $r_0=\sqrt{n}\;\nu_2$ 
 This implies that
\[
	[\nabla d_\Omega \cdot v]_{b^{\nu_2}} - [\nabla d_\Omega \cdot v]_{b^{\nu_1}}
	\leq \frac{1}{\nu^n_1}[v]_{\Gamma_\delta}.
\]
If $\nu_2=\delta=\infty$, we may assume $r=2^\ell \nu_1$.
 We divide $Q_r(x)$ into subcube $Q_j$, $j=1,\ldots,2^{\ell n}$ of side length $2\nu_1$.
 Then
\[
	\frac{1}{\left| Q_r(x) \right|} \int_{Q_r(x)} |\tilde{v}| \, dy 
	\leq \frac{1}{2^{\ell n}(2\nu_1)^n} \sum^{2^{\ell n}}_{j=1} \int_{Q_j} |\tilde{v}| \, dy 
	\leq \frac{2^{\ell n}}{2^{\ell n}(2\nu_1)^n} \| \tilde{v} \|_{L^1_{\mathrm{ul}}} 
	\leq \frac{1}{(2\nu_1)^n} \| \tilde{v} \|_{L^1_{\mathrm{ul}}}
\]
where $r_0$ in $L^1_{\mathrm{ul}}$ norm is taken as $\sqrt{n}\;\nu_1$.
 We thus observe that
\[
	[\nabla d \cdot v]_{b^{\nu_2}} - [\nabla d \cdot v]_{b^{\nu_1}}
	\leq c[v]_{\Gamma_\delta}.
\]
\end{proof}

By Propositions \ref{E1}, \ref{E2} we do not need to care about $\nu$.
 More precisely, 
\begin{theorem} \label{F3}
Let $\Omega$ be an arbitrary domain in $\mathbf{R}^n$.
 Assume that $\mu \in (0,\infty]$ and that $\delta \in (0,\infty]$.
 Then norms $\| \cdot \|_{vbmo^{\mu, \nu_1}_\delta}$ and $\| \cdot \|_{vbmo^{\mu, \nu_2}_\delta}$ are equivalent provided that $0<\nu_1<\nu_2<\infty$.
 In the case $\delta=\infty$, we may take $\nu_2=\infty$. 
\end{theorem}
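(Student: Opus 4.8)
The plan is to fix $v$ and compare the two norms summand by summand. Both $\|v\|_{vbmo^{\mu,\nu_1}_\delta}$ and $\|v\|_{vbmo^{\mu,\nu_2}_\delta}$ contain the common term $\|v\|_{bmo^\mu_\delta}$, so everything reduces to proving
\[
	[\nabla d_\Omega \cdot v]_{b^{\nu_1}} \leq [\nabla d_\Omega \cdot v]_{b^{\nu_2}}
	\quad\text{and}\quad
	[\nabla d_\Omega \cdot v]_{b^{\nu_2}} \leq C \bigl( \|v\|_{bmo^\mu_\delta} + [\nabla d_\Omega \cdot v]_{b^{\nu_1}} \bigr)
\]
with $C$ independent of $v$. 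The first inequality is just the monotonicity of $[\cdot]_{b^\nu}$ in $\nu$ recorded before Proposition \ref{E1}, and it already gives $\|v\|_{vbmo^{\mu,\nu_1}_\delta}\le\|v\|_{vbmo^{\mu,\nu_2}_\delta}$; the two-sided bound then also shows the underlying spaces coincide. So the content is the second inequality, and I would split according to the position of $\nu_2$ relative to $\delta$.

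If $\nu_2\le\delta$ — in particular whenever $\delta=\infty$, where $\nu_2=\infty$ is also allowed — Proposition \ref{E2} applies verbatim with the triple $\nu_1\le\nu_2\le\delta$ (note $v\in bmo^\mu_\delta$ gives $v\in L^1_{\mathrm{ul}}(\Gamma_\delta)$) and yields $[\nabla d_\Omega \cdot v]_{b^{\nu_2}}\le[\nabla d_\Omega \cdot v]_{b^{\nu_1}}+c[v]_{\Gamma_\delta}$, which is stronger than needed. This already settles the claim that for $\delta=\infty$ one may take $\nu_2=\infty$.

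The remaining case is $\delta<\infty$ and $\nu_2>\delta$, where the ball $B_r(x)$ in $[\cdot]_{b^{\nu_2}}$ may protrude from the collar $\Gamma_\delta$, so Proposition \ref{E2} no longer covers it. Here I would split the supremum defining $[\nabla d_\Omega \cdot v]_{b^{\nu_2}}$ at $r=\nu_1$. For $0<r<\nu_1$ the average is $\le[\nabla d_\Omega \cdot v]_{b^{\nu_1}}$ by definition. For $\nu_1\le r<\nu_2$ I would use that $d_\Omega$ is globally $1$-Lipschitz, hence $|\nabla d_\Omega|\le1$ a.e.\ and $|\nabla d_\Omega \cdot v|\le|v|$ a.e., reducing the average to $\nu_1^{-n}\int_{\Omega\cap B_r(x)}|v|\,dy$; since $x\in\Gamma$ and $r<\nu_2$, every $y\in\Omega\cap B_r(x)$ has $d_\Omega(y)\le|y-x|<\nu_2$, so $\Omega\cap B_r(x)\subset\Gamma_{\nu_2}$ and $B_r(x)\subset B_{\nu_2}(x)$, whence $\int_{\Omega\cap B_r(x)}|v|\,dy$ is controlled by the $L^1_{\mathrm{ul}}(\Gamma_{\nu_2})$ norm of $v$ computed with the admissible radius $r_0=\nu_2$, i.e.\ by $C[v]_{\Gamma_{\nu_2}}$. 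Finally Proposition \ref{E1}(ii) with $\delta_1=\delta<\delta_2=\nu_2$ gives $[v]_{\Gamma_{\nu_2}}\le C\bigl([v]_{BMO^\mu}+[v]_{\Gamma_\delta}\bigr)\le C\|v\|_{bmo^\mu_\delta}$. Combining the two ranges of $r$ yields $[\nabla d_\Omega \cdot v]_{b^{\nu_2}}\le[\nabla d_\Omega \cdot v]_{b^{\nu_1}}+C\|v\|_{bmo^\mu_\delta}$, and therefore $\|v\|_{vbmo^{\mu,\nu_2}_\delta}\le C\|v\|_{vbmo^{\mu,\nu_1}_\delta}$.

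The only genuinely non-routine point is this last case $\nu_2>\delta$: once one observes that the Lipschitz bound $|\nabla d_\Omega|\le1$ converts the normal component into the full field and that Proposition \ref{E1}(ii) transfers $L^1_{\mathrm{ul}}$ control from $\Gamma_\delta$ to the larger collar $\Gamma_{\nu_2}$ at the cost of the $BMO^\mu$ seminorm, the rest is bookkeeping. (A cosmetic alternative to choosing $r_0=\nu_2$ is to cover $B_r(x)$ by finitely many balls of a fixed radius; this only affects the constants.)
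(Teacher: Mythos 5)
Your proposal is correct and follows essentially the same route as the paper, which deduces Theorem \ref{F3} precisely from Proposition \ref{E1}(ii) and Proposition \ref{E2}: monotonicity in $\nu$ gives one direction, and the other comes from the $b^{\nu_2}$-versus-$b^{\nu_1}$ comparison at the cost of an $L^1_{\mathrm{ul}}$ term on a collar, transferred back to $\Gamma_\delta$ by Proposition \ref{E1}(ii). Your hand treatment of the case $\nu_2>\delta$ (using $|\nabla d_\Omega|\le 1$ and $\Omega\cap B_r(x)\subset\Gamma_{\nu_2}$) just reproduces the computation in the proof of Proposition \ref{E2} with the collar $\Gamma_{\nu_2}$ in place of $\Gamma_\delta$, so it is the intended argument rather than a different one.
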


In general, different from Theorem \ref{B2}, the space $vBMO^{\mu, \nu}$ may not be included in $bmo^\mu_\nu$ even for finite $\mu$ by the decomposition \eqref{DC} and the fact that $BMO^\mu$ is not contained in $L^1_{\mathrm{ul}}(\Gamma_\delta)$ for any $\delta$.
 However, if each connected component of the boundary $\Gamma$ of $\Omega$ has a curved part, we are able to compare these spaces.
\begin{definition} \label{FC}
Let $\Omega$ be a uniformly $C^1$ domain in $\mathbf{R}^n$ and $\Gamma^0$ be a connected component of the boundary $\Gamma$ of $\Omega$.
 We say that $\Gamma^0$ has a \textit{fully curved part} if the set of all normals of $\Gamma^0$ spans $\mathbf{R}^n$.
 In other words, the set $\left\{ \mathbf{n}(x) \in \mathbf{R}^n \mid x \in \Gamma^0 \right\}$ contains $n$ linearly independent vectors, when $\mathbf{n}$ denotes the unit exterior normal of $\Gamma^0$.
\end{definition}

We introduce $b^\nu(\Gamma^0)$-seminorm for convenience. Let us decompose $\Gamma$ into its connected component $\Gamma^j$ so that $\Gamma=\bigcup^m_{j=1}\Gamma^j$.
 We set
\[
	[f]_{b^\nu(\Gamma^j)} := \sup \left\{ r^{-n} \int_{\Omega\cap B_r(x)} \bigl| f(y) \bigr| \, dy \biggm| 
	x \in \Gamma^j,\ 0<r<\nu \right\}.
\]
Evidently, $[f]_{b^\nu} = \max_{1\leq j \leq m} [f]_{b^\nu(\Gamma^j)}$ at least for small $\nu>0$.

The existence of a fully curved part implies ``non-degeneracy'' of the seminorm $[\nabla d \cdot f]_{b^\nu}$.
\begin{lemma} \label{ND}
Let $\Omega$ be a uniformly $C^2$ domain in $\mathbf{R}^n$.
 Let $\Gamma^j$ be a connected component of the boundary $\Gamma$ of $\Omega$.
 If $c \in \mathbf{R}^n$ satisfies
\[
	[\nabla d_\Omega \cdot c]_{b^\nu(\Gamma^j)} = 0,
\]
for some $\nu>0$, then $c=0$ provided that $\Gamma^j$ has a fully curved part.
\end{lemma}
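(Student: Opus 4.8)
The plan is to unwind the hypothesis $[\nabla d_\Omega \cdot c]_{b^\nu(\Gamma^j)} = 0$ into a pointwise statement about $\nabla d_\Omega$ near $\Gamma^j$, and then feed it the fully curved part. Since $[\,\cdot\,]_{b^{\nu'}} \le [\,\cdot\,]_{b^{\nu}}$ for $\nu' \le \nu$, I may first shrink $\nu$ so that $\nu < \delta$, where $\delta$ is chosen as in the introduction (using that $\Omega$ is uniformly $C^2$) so that $d_\Omega \in C^2(\Gamma_\delta)$, $|\nabla d_\Omega| \equiv 1$ on $\Gamma_\delta$, and $\nabla d_\Omega$ extends continuously up to $\Gamma$ with boundary value $-\mathbf{n}$.

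The vanishing of the seminorm means exactly that $\int_{\Omega \cap B_r(x)} |\nabla d_\Omega(y)\cdot c|\,dy = 0$ for every $x\in\Gamma^j$ and every $0<r<\nu$. For such $x$ and $r$, any $y\in\Omega\cap B_r(x)$ satisfies $d_\Omega(y)\le|y-x|<r<\delta$, so $\Omega\cap B_r(x)\subset\Gamma_\delta$; on this set $y\mapsto|\nabla d_\Omega(y)\cdot c|$ is continuous and nonnegative, and $\Omega\cap B_r(x)$ is a nonempty open subset of $\mathbf{R}^n$, so the integral can vanish only if $\nabla d_\Omega\cdot c\equiv 0$ there. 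Letting $r\uparrow\nu$ and taking the union over $x\in\Gamma^j$, I get $\nabla d_\Omega\cdot c = 0$ on the one-sided neighborhood $\bigcup_{x\in\Gamma^j}\bigl(\Omega\cap B_\nu(x)\bigr)$ of $\Gamma^j$.

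Now fix $x\in\Gamma^j$ and let $\Omega\cap B_r(x)\ni y\to x$: continuity of $\nabla d_\Omega$ up to $\Gamma$ gives $\nabla d_\Omega(x)\cdot c = 0$, and since $\nabla d_\Omega(x) = -\mathbf{n}(x)$ this reads $\mathbf{n}(x)\cdot c = 0$ for every $x\in\Gamma^j$. (Equivalently, one can test along the inward normal segment $x - t\mathbf{n}(x)$, $0<t<\nu$, whose points lie in $\Omega\cap B_\nu(x)$ and at which $\nabla d_\Omega$ equals $-\mathbf{n}(x)$.) Finally, since $\Gamma^j$ has a fully curved part there exist $x_1,\dots,x_n\in\Gamma^j$ with $\mathbf{n}(x_1),\dots,\mathbf{n}(x_n)$ linearly independent; as $c$ is orthogonal to each $\mathbf{n}(x_i)$, it is orthogonal to all of $\mathbf{R}^n$, hence $c=0$.

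I do not expect a serious obstacle. The only point requiring care is the regularity and boundary behaviour of the distance function on a uniformly $C^2$ domain — namely that $d_\Omega$ is $C^2$ in a tubular neighbourhood $\Gamma_\delta$ with $|\nabla d_\Omega|\equiv 1$, and that $\nabla d_\Omega$ extends continuously to $\Gamma$ with $\nabla d_\Omega|_\Gamma=-\mathbf{n}$ — which is exactly the fact recalled in the introduction (\cite[Chapter 14, Appendix]{GT}, \cite[\S4.4]{KP}). Everything else is elementary: vanishing of the integral of a nonnegative continuous function over an open set forces the function to vanish, and then a one-line linear algebra argument finishes the proof.
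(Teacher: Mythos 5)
Your proof is correct and takes essentially the same route as the paper: both turn the vanishing of the $b^\nu$-seminorm into $\mathbf{n}(x)\cdot c=0$ for every $x\in\Gamma^j$, using the $C^2$ regularity of $d_\Omega$ in a tubular neighborhood and the boundary value $\nabla d_\Omega=-\mathbf{n}$ on $\Gamma$, and then finish with the linear-algebra consequence of the fully curved part. The only cosmetic difference is that you deduce $\nabla d_\Omega\cdot c\equiv 0$ pointwise on a one-sided neighborhood (zero integral of a nonnegative continuous function) and evaluate at the boundary, whereas the paper passes to the limit $r\to 0$ of the scaled averages $r^{-n}\int_{B_r(x)\cap\Omega}\nabla d_\Omega\,dy\to c_0\,\mathbf{n}(x)$.
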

\begin{proof}
If $\Omega$ is uniformly $C^2$, then $d_\Omega$ is $C^2$ in $(\Gamma^j)_\delta$ for sufficiently small $\delta>0$.
 Since $-\nabla d_\Omega(x)$ at $x \in \Gamma^j$ equals $\mathbf{n}(x)$, we see that
\[
	\frac{1}{r^n} \int_{B_r(x)\cap\Omega} \nabla d_\Omega (y) \, dy \to c_0 \mathbf{n}(x)
	\quad\text{as}\quad r \to 0
\]
with scalar constant $c_0$.
 Our assumption implies that $c\cdot\mathbf{n}(x)=0$ for $x \in \Gamma^j$.
 If $\Gamma^j$ has a curved part, then by definition this implies that $c=0$.
\end{proof}

Here is a few comments on examples of such domains.
 All connected components of the boundary of a bounded domain, exterior domain has a fully curved part.
 A perturbed half space
\[
	\mathbf{R}^n_\psi = \left\{ (x', x_n) \in \mathbf{R}^n \bigm|
	x_n > \psi(x'),\ x'=(x_1, \ldots, x_{n-1})\in \mathbf{R}^{n-1} \right\}
\] 
with $\psi \in C^1_c(\mathbf{R}^{n-1})$, $\psi\not\equiv 0$ is another example.
 However, a half-space $\mathbf{R}^n_+$, cylindrical domain $G\times\mathbf{R}^{n-k}$ with $k \geq 1$, $G\subset\mathbf{R}^k$ does not have a boundary having a fully curved part.
 Our goal is to show that for a domain with boundary components having a fully curved part the space $vBMO^{\mu, \nu}$ is comparable with $vbmo^{\mu, \nu}_\delta$ space if the boundary is compact.
\begin{theorem} \label{V1}
Let $\Omega$ be a $C^2$ bounded or exterior domain in $\mathbf{R}^n$ so that each component of the boundary has a fully curved part.
 For $\mu\in (0,\infty]$ and $\nu \in (0,R_*)$ the identity holds:
\[
	vBMO^{\mu, \nu}(\Omega) = vbmo^{\mu, \nu}_\nu.
\]
\end{theorem}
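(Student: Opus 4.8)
The plan is to prove the non-trivial inclusion $vBMO^{\mu,\nu}(\Omega) \subset vbmo^{\mu,\nu}_\nu$, i.e.\ to show that for $v \in vBMO^{\mu,\nu}(\Omega)$ one automatically has $v \in L^1_{\mathrm{ul}}(\Gamma_\nu)$, with norm controlled by $[v]_{BMO^\mu} + [\nabla d_\Omega \cdot v]_{b^\nu}$; the reverse inclusion and norm bound are immediate from the definitions together with Proposition~\ref{E1}. Since $\Gamma$ is compact, $\Gamma_\nu$ can be covered by finitely many balls, so it suffices to bound $\int_{B_r(x)\cap\Omega}|v|\,dy$ for $x$ near $\Gamma$ and $r$ of order $\nu$; and since $[v]_{BMO^\mu}$ already controls oscillation on all small balls inside $\Omega$, by a standard chaining/telescoping argument (as used repeatedly in the proof of Proposition~\ref{E1}) it is enough to control $|v_{B}|$ for a single ball $B$ sitting a fixed distance from $\Gamma$ inside each boundary chart. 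Thus the whole problem reduces to: bound the average $|v_B|$ of a vector field whose $BMO^\mu$ seminorm and whose $[\nabla d_\Omega \cdot v]_{b^\nu}$ seminorm are finite.

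The key step is the use of the fully curved part. Working in one connected component $\Gamma^j$, consider the map $\Phi(x) := \dfrac{1}{r^n}\displaystyle\int_{B_r(x)\cap\Omega}\nabla d_\Omega(y)\,dy$ for $x \in \Gamma^j$ and small $r$. For $v$ replaced by a constant vector $c$, Lemma~\ref{ND} says that $[\nabla d_\Omega\cdot c]_{b^\nu(\Gamma^j)}$ vanishes only for $c=0$; by compactness of $\Gamma^j$ and continuity this upgrades to a quantitative lower bound
\[
	|c| \leq C\, [\nabla d_\Omega \cdot c]_{b^\nu(\Gamma^j)}
	\qquad\text{for all } c \in \mathbf{R}^n .
\]
Now fix a ball $B = B_\rho(x_0) \subset \Omega$ with $x_0 \in \Gamma^j$-chart and $\rho \sim \nu$, and apply this to $c = v_B$. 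For each $x \in \Gamma^j$ and $r<\nu$ we write
\[
	\frac{1}{r^n}\int_{B_r(x)\cap\Omega} |\nabla d_\Omega \cdot v_B|\,dy
	\leq \frac{1}{r^n}\int_{B_r(x)\cap\Omega} |\nabla d_\Omega \cdot (v_B - v)|\,dy
	+ \frac{1}{r^n}\int_{B_r(x)\cap\Omega} |\nabla d_\Omega \cdot v|\,dy ,
\]
so that $[\nabla d_\Omega\cdot v_B]_{b^\nu(\Gamma^j)} \leq [\nabla d_\Omega\cdot v]_{b^\nu(\Gamma^j)} + \sup_{x,r}\frac{1}{r^n}\int_{B_r(x)\cap\Omega}|v_B - v|\,dy$. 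The last supremum is controlled by $[v]_{BMO^\mu}$ via the same telescoping-over-balls estimate used throughout Proposition~\ref{E1}: any $v_B - v_{B_r(x)\cap\Omega}$ is estimated by a bounded number of $BMO^\mu$-oscillation steps connecting $B$ to a ball near $B_r(x)$, using that $\Omega$ is a $C^2$ domain with compact boundary so that the relevant chains have uniformly bounded length and comparable radii. Combining with the lower bound above gives $|v_B| \leq C\bigl([v]_{BMO^\mu} + [\nabla d_\Omega\cdot v]_{b^\nu}\bigr)$, and then chaining once more yields the $L^1_{\mathrm{ul}}(\Gamma_\nu)$ bound.

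The main obstacle I expect is making the chaining argument uniform: one must produce, for every $x\in\Gamma$ and every admissible $r<\nu$, a chain of balls inside $\Omega$ of uniformly bounded cardinality and with uniformly comparable radii linking a reference ball $B$ to the region $B_r(x)\cap\Omega$, while staying a controlled distance from $\Gamma$ so that the $BMO^\mu$ seminorm (which only sees balls strictly inside $\Omega$) applies — this is where $C^2$ regularity of $\partial\Omega$ and compactness of $\Gamma$ enter, guaranteeing a tubular neighborhood of fixed width and finitely many charts. A secondary technical point is passing from the pointwise non-degeneracy of Lemma~\ref{ND} to the uniform linear-algebra bound $|c|\leq C[\nabla d_\Omega\cdot c]_{b^\nu(\Gamma^j)}$; this follows because $c\mapsto [\nabla d_\Omega\cdot c]_{b^\nu(\Gamma^j)}$ is a seminorm on $\mathbf{R}^n$ (sublinear, positively homogeneous, finite), it vanishes only at $0$ by Lemma~\ref{ND}, hence it is a genuine norm on the finite-dimensional space $\mathbf{R}^n$ and therefore equivalent to $|\cdot|$. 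Finally one should note that for an exterior domain the complement of a large ball contributes nothing to $\Gamma_\nu$ since $\Gamma$ is still compact, so the bounded-domain argument applies verbatim.
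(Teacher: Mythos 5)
The decisive step in your argument fails. After writing
\[
[\nabla d_\Omega\cdot v_B]_{b^\nu(\Gamma^j)} \leq [\nabla d_\Omega\cdot v]_{b^\nu(\Gamma^j)}
+ \sup_{x\in\Gamma^j,\,r<\nu}\frac{1}{r^n}\int_{B_r(x)\cap\Omega}|v_B-v|\,dy ,
\]
you claim the last supremum is controlled by $[v]_{BMO^\mu}$ by the same chaining as in Proposition~\ref{E1}. That is false: by the crude bound $|\nabla d_\Omega\cdot(v_B-v)|\le|v_B-v|$ you have thrown away the normal direction and are now demanding a $b^\nu$-type bound on the \emph{whole} vector $v$, which is exactly what membership in $vBMO^{\mu,\nu}$ does not provide — only the normal component is assumed to have finite $b^\nu$ seminorm. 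Concretely, let $\Omega$ be the unit disc and $v=\eta\,(\log d_\Omega)\,\tau$, where $\tau(x)=(-x_2,x_1)/|x|$ is the unit tangential field and $\eta$ a cutoff supported near $\Gamma$. Then $\nabla d_\Omega\cdot v\equiv 0$ and $v\in BMO^\mu(\Omega)$, so $v\in vBMO^{\mu,\nu}(\Omega)$, yet $r^{-n}\int_{B_r(x)\cap\Omega}|v-v_B|\,dy\gtrsim|\log r|\to\infty$ as $r\to0$, so your intermediate quantity is infinite and the argument yields no bound on $|v_B|$. The structural reason is that $[v]_{BMO^\mu}$ only sees balls compactly contained in $\Omega$, and a chain joining the reference ball $B$ to a ball of radius $r$ touching $\Gamma$ has length of order $\log(1/r)$; the ``uniformly bounded cardinality, comparable radii'' chains you invoke exist in Proposition~\ref{E1} only because there everything stays at distance $\ge\delta_1$ from $\Gamma$, which is not the case here.

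What is correct in your proposal is the norm-equivalence observation: $c\mapsto[\nabla d_\Omega\cdot c]_{b^\nu(\Gamma^j)}$ is a finite seminorm on $\mathbf{R}^n$ that vanishes only at $0$ by Lemma~\ref{ND}, hence $|c|\le C[\nabla d_\Omega\cdot c]_{b^\nu(\Gamma^j)}$; this is a nice quantitative form of the non-degeneracy. But the paper avoids the fatal comparison altogether by a compactness argument (Lemma~\ref{V2}): assuming the bound on the averages $f_{B(x)}$ over fixed balls fails, one normalizes, uses the vanishing of the $BMO^\mu$ seminorms and connectedness to extract a.e.\ convergence in the interior to a nonzero constant $c$, and then concludes $[\nabla d_\Omega\cdot c]_{b^\nu}=0$ by Fatou (lower semicontinuity of the $b^\nu$ seminorm), contradicting Lemma~\ref{ND}; no boundary-touching integral of $|f^k-c|$ is ever estimated. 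If you insist on a quantitative route, you would need to (i) replace $[\nabla d_\Omega\cdot v_B]_{b^\nu}$ by finitely many fixed-scale quantities $r_i^{-n}\int_{B_{r_i}(x_i)\cap\Omega}|\nabla d_\Omega\cdot c|\,dy$ with $r_i\sim\nu$, which still define a norm on constants because the normals span, and (ii) bound $\int_{B_{r_i}(x_i)\cap\Omega}|v-v_B|\,dy$ at that fixed scale by a Whitney summation in which the $\log(1/\ell(Q))$ chain lengths are summable against $|Q|$; the same summation, not bounded chains, is also what your final step from the bound on $|v_B|$ to the $L^1_{\mathrm{ul}}(\Gamma_\nu)$ estimate actually requires. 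As written, the proposal does neither, so there is a genuine gap.
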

\begin{proof}
Let $\Gamma^j$ be a $j$-th connected component of the boundary $\Gamma=\partial\Omega$ such that $\Gamma=\bigcup^m_{j=1} \Gamma^j$.
 Since $(\Gamma^j)_\nu$ is $C^2$ and compact, there is a number $r_0 \in (0, \nu/2)$ such that 
\[
	(\Gamma^j)_\nu = \bigcup_{x \in \Lambda} \operatorname{int}B_{r_0}(x), \quad
	\Lambda \subset (\Gamma^j)_\nu,
\]
where $\Gamma^j$ is a connected component of $\Gamma$ and $(\Gamma^j)_\nu$ denotes its $\nu$-neighborhood.
 The next lemma shows that
\[
	vBMO^{\nu, \nu}(\Omega) \subset L^1_{\mathrm{ul}}(\Gamma_\nu)
\]
which yields the desired result.
 Note that we may assume $\nu\leq\mu$ by Proposition \ref{E1}.
\end{proof}
\begin{lemma} \label{V2}
Under the same assumption of Theorem \ref{V1} with $\mu\leq\nu$ assume that $r_0 < \nu/2 < R_*/2$ is taken so that
\[
	(\Gamma^i)_\nu = \bigcup_{x \in \Lambda} \operatorname{int}B_{r_0}(x)
\]
with some $\Lambda\subset(\Gamma^j)_\nu$.
 Then there exists $C>0$ depending only on $r_0$, $n$, $\Gamma^j$, $\nu$ such that
\[
	\sup_{x \in \Lambda} \frac{1}{\left| B_{r_0}(x) \right|} \int_{B_{r_0}(x)} \left|f(y)\right| \, dy
	\leq C \Bigl( [f]_{BMO^\mu\left((\Gamma^j)_\nu\right)} + [\nabla d_\Omega \cdot f]_{b^\nu(\Gamma^j)} \Bigr).
\]
\end{lemma}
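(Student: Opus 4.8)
The plan is to reduce the estimate to controlling a single constant vector $a\in\mathbf{R}^n$, and to control $a$ through the normal‑component hypothesis, the curvature entering only at the very last step. By Proposition~\ref{E1}(i) applied on the bounded set $(\Gamma^j)_\nu$ we may assume $\mu=\nu$. Fix a reference point $q_0\in\Gamma^j$ and a radius $\rho\in(0,\nu/2)$ to be chosen at the end (it will depend only on $n,\nu,\Gamma^j$), and set $a:=f_{B_\rho(q_0)\cap\Omega}$. Since $\Omega$ is uniformly $C^2$ and $\rho<R_*$, each truncated half‑ball $V_q:=B_\rho(q)\cap\Omega$ ($q\in\Gamma^j$) satisfies $|V_q|\ge c_n\rho^n$ and is a John domain whose John constant is bounded uniformly in $q$; likewise for $B_{r_0}(x)\cap\Omega$, $x\in\Lambda$. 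As $\Gamma^j$ is compact and connected and $r_0,\rho$ are fixed, a standard chaining argument on these uniform John domains — passing between half‑balls centered at nearby boundary points, and doubling a half‑ball to change its scale — gives, with $C=C(r_0,n,\Gamma^j,\nu)$,
\[
|f_{V_q}-a|\le C[f]_{BMO^\mu((\Gamma^j)_\nu)}\ \ (q\in\Gamma^j),\qquad \frac{1}{|B_{r_0}(x)|}\int_{B_{r_0}(x)}|f|\,dy\le |a|+C[f]_{BMO^\mu((\Gamma^j)_\nu)}\ \ (x\in\Lambda).
\]
Hence it suffices to prove $|a|\le C\bigl([f]_{BMO^\mu((\Gamma^j)_\nu)}+[\nabla d_\Omega\cdot f]_{b^\nu(\Gamma^j)}\bigr)$.

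To estimate $|a|$ I bound the normal component of $f_{V_q}$ for $q\in\Gamma^j$. Since $\Omega$ is $C^2$ and $\nu<R_*$, $\nabla d_\Omega$ is Lipschitz on $(\Gamma^j)_\nu$, say with constant $L_j=L_j(\Gamma^j,\nu)$, and $\nabla d_\Omega(q)=-\mathbf{n}(q)$. Decomposing $\nabla d_\Omega(y)=-\mathbf{n}(q)+\bigl(\nabla d_\Omega(y)-\nabla d_\Omega(q)\bigr)$ and averaging over $V_q$,
\[
\mathbf{n}(q)\cdot f_{V_q}=-\frac{1}{|V_q|}\int_{V_q}\nabla d_\Omega\cdot f\,dy+\frac{1}{|V_q|}\int_{V_q}\bigl(\nabla d_\Omega(y)-\nabla d_\Omega(q)\bigr)\cdot f(y)\,dy.
\]
The first term is bounded in modulus by $|V_q|^{-1}\int_{\Omega\cap B_\rho(q)}|\nabla d_\Omega\cdot f|\,dy\le c_n^{-1}[\nabla d_\Omega\cdot f]_{b^\nu(\Gamma^j)}$, straight from the definition of $[\cdot]_{b^\nu(\Gamma^j)}$ (as $\rho<\nu$ and $|V_q|\ge c_n\rho^n$); the second is at most $L_j\rho\cdot|V_q|^{-1}\int_{V_q}|f|\le L_j\rho\bigl(|f_{V_q}|+C[f]_{BMO^\mu((\Gamma^j)_\nu)}\bigr)$ by the John‑domain estimate for $|V_q|^{-1}\int_{V_q}|f-f_{V_q}|$. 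Inserting $|f_{V_q}-a|\le C[f]_{BMO^\mu((\Gamma^j)_\nu)}$ and using $\rho<\nu$,
\[
|\mathbf{n}(q)\cdot a|\le C\bigl([f]_{BMO^\mu((\Gamma^j)_\nu)}+[\nabla d_\Omega\cdot f]_{b^\nu(\Gamma^j)}\bigr)+L_j\rho\,|a|\qquad\text{for all }q\in\Gamma^j,
\]
with $C=C(n,\Gamma^j,\nu)$.

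Now I invoke the fully curved part of $\Gamma^j$: by Definition~\ref{FC} there are $q_1,\dots,q_n\in\Gamma^j$ with $\mathbf{n}(q_1),\dots,\mathbf{n}(q_n)$ linearly independent, hence a constant $\kappa=\kappa(n,\Gamma^j)>0$ with $|v|\le\kappa\max_{1\le i\le n}|\mathbf{n}(q_i)\cdot v|$ for every $v\in\mathbf{R}^n$. Applying the last display at $q=q_1,\dots,q_n$,
\[
|a|\le\kappa C\bigl([f]_{BMO^\mu((\Gamma^j)_\nu)}+[\nabla d_\Omega\cdot f]_{b^\nu(\Gamma^j)}\bigr)+\kappa L_j\rho\,|a|.
\]
Finally choose $\rho:=\min\{\nu/2,\,(2\kappa L_j)^{-1}\}$, which depends only on $n,\nu,\Gamma^j$, so that $\kappa L_j\rho\le\tfrac12$; the last term is absorbed, and $|a|\le 2\kappa C\bigl([f]_{BMO^\mu((\Gamma^j)_\nu)}+[\nabla d_\Omega\cdot f]_{b^\nu(\Gamma^j)}\bigr)$. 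Together with the first paragraph this proves the lemma, the constant depending only on $r_0,n,\Gamma^j,\nu$.

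Two points require attention. A priori $f$ is merely in $L^1_{\mathrm{loc}}(\Omega)$, so the averages over $V_q$ could be infinite; this is handled by first carrying out the argument on the inner domains $\Omega_\varepsilon=\{d_\Omega>\varepsilon\}$ — on which $f$ is integrable up to the boundary, $\nabla d_{\Omega_\varepsilon}=\nabla d_\Omega$, and $R_*,L_j,\kappa$, the John constants and the chaining lengths are uniform for small $\varepsilon$, while $[f]_{BMO^\mu(\Omega_\varepsilon)}\le[f]_{BMO^\mu(\Omega)}$ and (restricting radii to $<\nu-\varepsilon$) the relevant $b$‑integrals are controlled by $[\nabla d_\Omega\cdot f]_{b^\nu(\Gamma^j)}$ — obtaining the bound with an $\varepsilon$‑independent constant, and then letting $\varepsilon\downarrow0$ via monotone convergence ($B_{r_0}(x)\cap\Omega_\varepsilon\uparrow B_{r_0}(x)\cap\Omega$). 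The genuine obstacle is the absorption in the third paragraph: it is legitimate only because the error term is truly of the form $(\text{small})\cdot|a|$, which rests on the $C^2$ regularity of $\Omega$ (giving Lipschitz $\nabla d_\Omega$ near $\Gamma^j$) together with the freedom to shrink $\rho$; and it is exactly the fully curved assumption that turns control of the scalars $\mathbf{n}(q)\cdot a$ into control of the vector $a$. The remaining ingredients are the routine comparison of averages on uniform John domains, as in Jones' extension theory.
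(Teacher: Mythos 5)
Your argument is correct, but it is genuinely different from the one in the paper. The paper proves the key bound $\sup_{x\in\Lambda}|f_{B(x)}|\leq C([f]_{BMO^\mu}+[\nabla d_\Omega\cdot f]_{b^\nu})$ by contradiction: a normalized sequence $f^k$ with vanishing seminorms is shown, via the covering by the balls $B(x)$, connectedness and a diagonal argument, to converge in $L^1_{\mathrm{loc}}$ to a nonzero constant vector $c$ whose $b^\nu$-seminorm vanishes by Fatou, and then Lemma \ref{ND} (the non-degeneracy coming from the fully curved part) forces $c=0$. You instead argue directly and quantitatively: chaining on uniformly John/Lipschitz pieces ($B_\rho(q)\cap\Omega$ and the covering balls) pins every local average to one constant vector $a$ up to $C[f]_{BMO^\mu}$; writing $\nabla d_\Omega(y)=-\mathbf{n}(q)+O(L_j\rho)$ converts the $b^\nu$-control of $\nabla d_\Omega\cdot f$ into control of $\mathbf{n}(q)\cdot a$ up to an error $L_j\rho|a|$; and the fully curved hypothesis enters as an explicit spanning constant $\kappa$ with $|v|\leq\kappa\max_i|\mathbf{n}(q_i)\cdot v|$, so that choosing $\rho\leq(2\kappa L_j)^{-1}$ absorbs the error. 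In effect you replace Lemma \ref{ND} by its quantitative version and the compactness step by explicit chains, which buys an effective constant and, via your $\Omega_\varepsilon$-approximation, an honest treatment of the a priori finiteness of the boundary averages (a point the paper's normalization leaves implicit); the cost is that you lean on the standard but unproved local-to-global BMO comparison on uniform (John) domains, whereas the paper's soft argument needs no such machinery. Two small caveats: your reduction to $\mu=\nu$ via Proposition \ref{E1}(i) makes the constant depend on $\mu$ as well (the paper's own statement of the constant, under its hypothesis $\mu\leq\nu$, has the same issue, since chains must use balls of radius $<\mu$), and, as in the paper, the integrals over $B_{r_0}(x)$ should be read over the part of the ball where $f$ lives, since the covering balls of fixed radius $r_0$ cannot all lie inside $(\Gamma^j)_\nu$.
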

\begin{proof}
We shall suppress $r_0$ dependence since it is fixed.
 We shall prove the average $\displaystyle f_{B(x)} = \frac{1}{\left| B(x) \right|}\int_{B(x)} f \, dy$ has an estimate
\begin{equation} \label{KI}
	\sup_{x \in \Lambda} \left| f_{B(x)} \right| \leq
	C \Bigl( [f]_{BMO^\mu\left((\Gamma^j)_\nu\right)} + [\nabla d \cdot f]_{b^\nu(\Gamma^j)} \Bigr).
\end{equation}
If this is proved, applying the triangle inequality
\[
\left(|f|\right)_{B(x)} \leq \frac{1}{\left|B(x)\right|} \int_{B(x)} \left| f-f_{B(x)} \right| \, dy + \left| f_{B(x)} \right|
\]
yields the desired result.

We shall prove the key inequality \eqref{KI} by contradiction argument.
 Assume the inequality \eqref{KI} were false.
 Then\tcr{,} there would exist a sequence $\{f^k\}^\infty_{k=1}$ such that
\[
	1 = \sup_{x \in \Lambda} \left| f^k_{B(x)} \right| \geq
	k \left( \left[f^k\right]_{BMO^\mu} + \left[\nabla d_\Omega \cdot f^k\right]_{b^\nu} \right).
\]
Here we suppress $(\Gamma^i)_\nu$ and $\Gamma^j$ in the right-hand side.
 Since
\[
	\sup_{x \in \Lambda} \left| c^k(x) \right| = 1
	\quad\text{with}\quad
	c^k(x) = f^k_{B(x)} \in \mathbf{R}^n,
\]
there is a sequence $\left\{x_k\right\}^\infty_{k=1}$ in $\Lambda$ with the property
\[
	1 \geq \left| c^k(x_k) \right| \geq 1/2.
\]
By taking a subsequence we may assume that $x_k$ converges to some $\hat{x}\in(\Gamma^j)_\nu$ since $\Gamma^j$ is compact and $d\left(x_k, \partial(\Gamma^j)_\nu \right)\geq r_0$, where $d(x_k,A)$ denotes the distance from a point $x_k$ to a set $A$.
 Since $\Gamma^j$ is connected, there is an increasing sequence $\{K_\ell\}^\infty_{\ell=1}$ of connected compact sets in $(\Gamma^j)_\nu$ such that $\operatorname{int}K_\ell \ni\hat{x}$ for $\ell\geq 1$ and $(\Gamma^j)_\nu=\bigcup^\infty_{\ell=1} K_\ell$.
  By compactness, there is a finite subset $\Lambda_\ell$ of $\Lambda$ with the property that
\[
	K_\ell \subset \bigcup_{x\in\Lambda_\ell} \operatorname{int}B(x), \quad
	\Lambda_\ell \subset \Lambda_{\ell+1}
\]
and the right-hand side is connected.
 By taking a further subsequence we may assume that $c^k(x) \to c(x)$ for $x\in\Lambda_\ell$.
 However, since $\left[f^k\right]_{BMO^\mu}\to 0$ so that
\[
	\int_{B(x)} \left| f^k - c^k \right| \, dx \to 0
\]
as $k\to\infty$, we see that $c(x)=c(y)$ if $\operatorname{int}B(x) \cap \operatorname{int}B(y) \neq \emptyset$.
 Since
\[
	\bigcup_{x \in \Lambda_\ell} \operatorname{int} B(x)
\]
is connected, $c(x)$ is independent of $x \in \Lambda_\ell$, say $c=c_\ell$.
 By taking a further subsequence of $\left\{f^k\right\}$ we may assume that $c^k(x)\to c_\ell$ in $\Lambda_\ell$.
 By a diagonal argument there is a subsequence of $\left\{f^k\right\}$ such that
\[
	c^k(x) \to c \quad\text{for}\quad
	x \in \bigcup^\infty_{\ell=1} \Lambda_\ell =: \Lambda_\infty \subset \Lambda.
\]
We thus observe that
\[
	\int_{B(x)} \left| f^k(y) - c \right| \, dy \to 0
	\quad\text{for}\quad x \in \Lambda_\infty
	\quad\text{as}\quad k \to \infty.
\]
If we take $B(x)$ such that $\hat{x} \in \operatorname{int}B(x)$, $c$ should not be equal to zero since $\left| c^k(x_k)\right| \geq 1/2$ and $x_k \to \hat{x}$ as $k \to \infty$.
 We now invoke the property that
\[
	\left[ \nabla d_\Omega \cdot f^k \right]_{b^\nu} \to 0.
\]
Since
\[
	(\Gamma^j)_\nu = \bigcup_{x \in \Lambda_\infty} B(x),
\]
we observe that $f^k \to c$ in $L^1_{\mathrm{loc}}\left((\Gamma^j)_\nu\right)$.
 By taking a subsequence we may assume that $f^k(x) \to c$ for a.e.\ $x \in (\Gamma^j)_\nu$ so that $\nabla d_\Omega \cdot f^k \to \nabla d_\Omega \cdot c$, a.e.
 By lower semicontinuity of integrals (Fatou's lemma) and supremum operation, the seminorm $b^\nu$ is lower semicontinuous under this convergence.
 We thus conclude that
\[
	\left[ \nabla d_\Omega \cdot c \right]_{b^\nu} \leq
	\varliminf_{k\to\infty} \left[ \nabla d_\Omega \cdot f^k \right]_{b^\nu} = 0.
\]
By Lemma \ref{ND} this $c$ must be zero which leads a contradiction.
 We thus proved the key estimate \eqref{KI}.
 This completes the proof of Lemma \ref{V2}.
\end{proof}
%

\section{A variant of Jones' extension theorem} \label{JE} 

Different from $L^\infty$ function it is in general impossible to extend $BMO$ function by setting zero outside the domain.
 Indeed, the zero-extension of $\log\min(x,1) \in bmo^\infty_\infty(\mathbf{R}^1_+)$ does not belong to $BMO^\infty(\mathbf{R})$.
 The goal in this section is to give a linear, extention operator of $BMO$ type function so that the support of extended function is contained in an $\varepsilon$-neighborhood of the original domain, of a function.

For this purpose we recall an extension given by P.\ W.\ Jones \cite{PJ}.
 Since we modify the way of construction, we will give a sketch of this construction.
 We first recall a dyadic Whitney decomposition of a set $A$ in $\mathbf{R}^n$.
 Let $\mathcal{A}=\{Q_j\}_{j\in\mathbf{N}}$ be a set of dyadic closed cubes with side length $\ell(Q_j)$ contained in $A$ satisfying following four conditions.
\begin{enumerate}
\item[(i)] $A = \cup_j Q_j$,  
\item[(i\hspace{-0.1em}i)] $\operatorname{int}Q_j \cap \operatorname{int}Q_k = \emptyset$ if $j=k$,  
\item[(i\hspace{-0.1em}i\hspace{-0.1em}i)] $\sqrt{n} \leq d(Q_j, \mathbf{R}^n \backslash A) / \ell(Q_j) \leq 4\sqrt{n}$ for all $j \in \mathbf{N}$,  
\item[(i\hspace{-0.1em}v)] $1/4 \leq \ell(Q_k) / \ell(Q_j) \leq 4$ if $Q_j \cap Q_k \neq \emptyset$.
\end{enumerate}
We say that $\mathcal{A}$ is called a dyadic Whitney decomposition of $A$.
 Such a decomposition exists for any open sets; see \cite[Chapter V\hspace{-0.1em}I, Theorem 1]{Ste}.
 Here $d(B,C)$ for sets $B,C$ in $\mathbf{R}^n$ is defined as
\[
	d(B,C) = \operatorname{inf} \left\{ |x-y| \bigm| x \in B,\ y \in C \right\}.
\]
If $B$ is a point $x$, we write $d(x,C)$ instead of $d\left( \{x\},C \right)$.

There are at least two important distance functions on $\mathcal{A}$.
 For $Q_j,Q_k \in \mathcal{A}$, a family $\left\{ Q(\ell) \right\}^m_{\ell=0} \subset \mathcal{A}$ is called a Whitney chain of length $m$ if $Q(0)=Q_j$ and $Q(m)=Q_k$ such that $Q(\ell) \cap Q(\ell+1) \neq \emptyset$ for $\ell$ with $0 \leq \ell \leq m-1$.
 Then the length of the shortest Whitney chain connecting $Q_j$ and $Q_k$ gives a distance on $\mathcal{A}$, which is denoted by $d_1(Q_j,Q_k)$.
 The second distance for $Q_j,Q_k \in \mathcal{A}$ is defined as
\[
	d_2(Q_j,Q_k) := \log \left| \frac{\ell(Q_j)}{\ell(Q_k)} \right| + \log \left| \frac{\ell(Q_j,Q_k)}{\ell(Q_j) + \ell(Q_k)} + 1 \right|.
\]
Note that $d_1$ and $d_2$ are invariant under dilation as well as translation and rotation.
 P.\ W.\ Jones \cite{PJ} gives a necessary and sufficient conditon for a domain such that there exists a linear extension operator.
 A domain $\Omega$ is called a uniform domain if there exists constants $a,b>0$ such that for all $x,y \in \Omega$ there exists a rectifiable curve $\gamma \subset \Omega$ of length $s(\gamma) \leq a|x-y|$ with $\min \left\{s \left(\gamma(x,z)\right),\ s\left(\gamma(y,z)\right) \right\} \leq bd(z,\partial\Omega)$, where $\gamma(x,z)$ denotes the part of $\gamma$ between $x$ and $z$ on the curve; see e.g,\ \cite{GO}.
 It is equivalent to say that there is a constant $K>0$ such that
\begin{equation} \label{UD}
	d_1(Q_j,Q_k) \leq K d_2 (Q_j,Q_k)
\end{equation}
for all $Q_j,Q_k \in \mathcal{A}$ and some dyadic Whitney decomposition $\mathcal{A}$ of $\Omega$.
\begin{theorem} \label{PJ} 
Let $A\subset\mathbf{R}^n$ be a uniform domain.
 Then there is a constant $C(K)$ depending only on $K$ in \eqref{UD} such that for each $f \in BMO^\infty(A)$ there is an extension $\overline{f} \in BMO^\infty(\mathbf{R}^n)$ satysfying
\[
	\left[\overline{f}\right]_{BMO^\infty(\mathbf{R}^n)} \leq C(K)[f]_{BMO^\infty(A)}.
\]
The operator $f\mapsto\overline{f}$ is a bounded linear operator.
 Conversely, if there exists such an extension, then $A$ is a uniform domain.
\end{theorem}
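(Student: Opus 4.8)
The strategy is the classical construction of P.\ W.\ Jones \cite{PJ}, of which we give only a sketch, since we will later modify it slightly to localize the support of the extension. Fix a dyadic Whitney decomposition $\mathcal{A}=\{Q_j\}$ of $A$ for which \eqref{UD} holds with constant $K$, and a dyadic Whitney decomposition $\mathcal{W}=\{S_k\}$ of the open set $\mathbf{R}^n\setminus\overline{A}$. Using the uniform domain condition one assigns to each \emph{relevant} exterior cube $S_k$ (those whose side length does not exceed a fixed multiple of its distance to $A$, and, when $A$ is unbounded, of $\operatorname{diam}A$) a \emph{reflected} interior cube $Q_k^\ast\in\mathcal{A}$ with $\ell(Q_k^\ast)$ comparable to $\ell(S_k)$ and $d(S_k,Q_k^\ast)$ comparable to $\ell(S_k)$, the comparability constants depending only on $n$ and $K$. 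Pick a smooth partition of unity $\{\varphi_k\}$ subordinate to the slightly dilated cubes $\tfrac{17}{16}S_k$, with $\sum_k\varphi_k\equiv 1$ on a neighborhood of the relevant part of $\mathbf{R}^n\setminus\overline{A}$ and $|\nabla\varphi_k|\leq C/\ell(S_k)$, and set
\[
	\overline{f}(x):=
	\begin{cases}
		f(x), & x\in A,\\[2pt]
		\sum_{k}\varphi_k(x)\,f_{Q_k^\ast}, & x\in\mathbf{R}^n\setminus\overline{A},
	\end{cases}
\]
the sum running over relevant cubes only, with $\overline{f}$ taken to be a fixed constant on the remaining (non-relevant) region. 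The map $f\mapsto\overline{f}$ is linear by construction and satisfies $\overline{f}\,|_A=f$, so it is an extension operator.

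\textbf{Two elementary estimates.} If $Q,Q'\in\mathcal{A}$ are adjacent Whitney cubes, they lie in a common cube of comparable size, so $|f_Q-f_{Q'}|\leq C_n[f]_{BMO^\infty(A)}$; telescoping along a Whitney chain gives $|f_Q-f_{Q'}|\leq C_n\,d_1(Q,Q')\,[f]_{BMO^\infty(A)}$ for all $Q,Q'\in\mathcal{A}$. Next, since $\sum_k\nabla\varphi_k\equiv 0$ on the exterior, for $x\in S_k$ one has $\nabla\overline{f}(x)=\sum_{l}\nabla\varphi_l(x)\,(f_{Q_l^\ast}-f_{Q_k^\ast})$, where only boundedly many $S_l$ of side length comparable to $\ell(S_k)$ meet $\tfrac{17}{16}S_k$; for each such pair $d_1(Q_l^\ast,Q_k^\ast)\leq K\,d_2(Q_l^\ast,Q_k^\ast)\leq C(n,K)$, whence $|\nabla\overline{f}(x)|\leq C(n,K)\,[f]_{BMO^\infty(A)}/d(x,\partial A)$ for $x$ in the relevant part of $\mathbf{R}^n\setminus\overline{A}$.

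\textbf{The $BMO$ bound.} It suffices to bound $|Q|^{-1}\int_Q|\overline{f}-\overline{f}_Q|$ uniformly over cubes $Q$. If $Q\subset A$ this is immediate; if $Q$ lies in the exterior away from $\partial A$, its oscillation is controlled by $\operatorname{diam}(Q)\sup_Q|\nabla\overline{f}|$ together with the chain estimate. The essential case is $Q\cap A\neq\emptyset\neq Q\cap A^c$: choose a reference cube $Q_0\in\mathcal{A}$ with $\ell(Q_0)$ comparable to $\ell(Q)$ near $Q$, compare $\overline{f}$ on $Q$ with $f_{Q_0}$, split $\int_Q|\overline{f}-f_{Q_0}|=\int_{Q\cap A}+\int_{Q\cap A^c}$, estimate the first integral by a Whitney-chain/Poincar\'e argument inside $A$, and the second by
\[
	\int_{Q\cap A^c}|\overline{f}-f_{Q_0}|\,dx\;\leq\;C[f]_{BMO^\infty(A)}\sum_{k:\,S_k\cap Q\neq\emptyset}|S_k|\,\bigl(1+d_1(Q_k^\ast,Q_0)\bigr).
\]
The decisive point is the combinatorial bound $\sum_{k:\,S_k\cap Q\neq\emptyset}|S_k|\,(1+d_1(Q_k^\ast,Q_0))\leq C(n,K)\,|Q|$, which is exactly where \eqref{UD} enters: replacing $d_1$ by $K d_2$ gives $d_2(Q_k^\ast,Q_0)\lesssim 1+\log(\ell(Q)/\ell(S_k))$, while at each dyadic scale $2^{-j}\ell(Q)$ at most $C_n 2^{j(n-1)}$ exterior Whitney cubes of that side length meet $Q$ (they cluster along the $(n-1)$-dimensional boundary), so the sum is dominated by $C_n|Q|\sum_{j\geq 0}2^{j(n-1)}2^{-jn}(1+j)<\infty$. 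Combining the three cases yields $[\overline{f}]_{BMO^\infty(\mathbf{R}^n)}\leq C(K)[f]_{BMO^\infty(A)}$ and boundedness of $f\mapsto\overline{f}$.

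\textbf{Converse and main obstacle.} For the converse one argues as in \cite{PJ}: the existence of a bounded extension forces \eqref{UD}, seen by testing against the functions $x\mapsto d_1(Q_x,Q_0)$, where $Q_x\in\mathcal{A}$ is the Whitney cube containing $x$, whose $BMO^\infty(A)$-seminorm is controlled precisely when the chain condition holds, while an unbounded extension of it would violate $BMO^\infty(\mathbf{R}^n)$. The only genuinely nontrivial step is the combinatorial summation in the straddling-cube case: it is \eqref{UD} that converts the a priori unbounded Whitney-chain distance $d_1$ into the logarithmic quantity $d_2$, which is summable against the Whitney volumes accumulating on $\partial A$; all remaining verifications are routine.
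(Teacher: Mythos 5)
First, note that the paper does not actually prove Theorem \ref{PJ}: it is quoted from Jones' paper \cite{PJ} (only the variant Theorem \ref{PJM} is proved here, and its proof in turn cites \cite{PJ} for the $BMO$ bound). So your attempt is to be judged as a reconstruction of Jones' argument, and its overall architecture — Whitney decompositions of $A$ and of its complement, matching each exterior cube to an interior cube of comparable size and distance, the telescoping chain estimate $|f_Q-f_{Q'}|\leq C\,d_1(Q,Q')[f]_{BMO^\infty(A)}$, the three-case analysis of test cubes, and the use of \eqref{UD} to replace $d_1$ by the logarithmic quantity $d_2$ — is indeed the right one. (Your smooth partition of unity in place of Jones' piecewise-constant assignment $\overline{f}=f_{Q_k^\ast}$ on each exterior Whitney cube is a harmless cosmetic change, and your treatment of the ``non-relevant'' far-away cubes matches what is done in the proof of Theorem \ref{PJM}.)

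However, the step you yourself single out as decisive contains a genuine gap. You justify $\sum_{k:\,S_k\cap Q\neq\emptyset}|S_k|\bigl(1+\log(\ell(Q)/\ell(S_k))\bigr)\leq C(n,K)|Q|$ by asserting that at scale $2^{-j}\ell(Q)$ at most $C_n2^{j(n-1)}$ exterior Whitney cubes meet $Q$ because they ``cluster along the $(n-1)$-dimensional boundary.'' That count is false for general uniform domains: the boundary of a uniform domain need not be $(n-1)$-dimensional (the Koch snowflake domain, the paradigm example motivating Jones' theorem, has boundary of Hausdorff and Minkowski dimension $\log 4/\log 3>1$), and for such domains the number of exterior Whitney cubes of side $2^{-j}\ell(Q)$ meeting $Q$ grows like $2^{jd}$ with $d>n-1$. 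The correct summability is not free; it uses the geometry of a uniform domain once more, e.g.\ via the interior corkscrew property it implies: every ball $B(x,r)$ centered on $\Gamma=\partial A$ contains a ball of radius $c(K)r$ inside $A$, whence $\partial A$ is porous and $\bigl|\{y\in CQ:\ d(y,\partial A)<t\}\bigr|\leq C\,(t/\ell(Q))^{\epsilon}|Q|$ for some $\epsilon=\epsilon(n,K)>0$. Since exterior Whitney cubes of side $2^{-j}\ell(Q)$ lie within distance $C2^{-j}\ell(Q)$ of $\partial A$, this gives a volume bound $C2^{-j\epsilon}|Q|$ at scale $j$, and then $\sum_j(1+j)2^{-j\epsilon}|Q|\leq C(n,K)|Q|$ as required. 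Without such an argument (or Jones' own packing lemma) the key estimate is unsupported, and as written the constant would not depend only on $K$ and $n$. A secondary, smaller point: the converse is only gestured at; the function $x\mapsto d_1(Q_x,Q_0)$ must be shown to have $BMO^\infty(A)$ seminorm bounded independently of $Q_0$ (which again uses a chain/packing argument), and one must explain how boundedness of the extended function's $BMO$ norm on $\mathbf{R}^n$ forces the logarithmic bound $d_1\leq Kd_2$; as stated this is a plan rather than a proof.
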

A bounded Lipschitz domain is a typical example of a uniform domain.
 The constant $K$ in $\eqref{UD}$ depends only on the Lipschitz regularity of the domain.
 A Lipshitz half space $\mathbf{R}^n_\psi$ is another example of a uniform domain; here $\psi$ is a Lipshitz function on $\mathbf{R}^{n-1}$.

We next note that if we modify the construction by P.\ W.\ Jones, the support of the extension $\overline{f}$ is contained in an $\varepsilon$-neighborhood of $\overline{\Omega}$ if $f$ is also in $L^1_\mathrm{ul}$ type space.
\begin{theorem} \label{PJM}
Let $\Omega\subset\mathbf{R}^n$ be a uniform domain.
For each $\varepsilon>0$ there is a constant $C=C(K,\varepsilon)$ with $K$ in \eqref{UD} such that for each $f \in bmo^\infty_\infty(\Omega)$ there is an extension $\overline{f}\in bmo^\infty_\infty(\Omega_{2\varepsilon})$ such that
\[
	\left[\overline{f}\right]_{bmo^\infty_\infty(\Omega_{2\varepsilon})} \leq
	C[f]_{bmo^\infty_\infty(\Omega)}
\]
and $\operatorname{supp}\overline{f}\subset \overline{\Omega_\varepsilon}$, where
\[
	\Omega_\varepsilon := \left\{ x \in \mathbf{R}^n \bigm|
	d(x, \overline{\Omega}) < \varepsilon \right\}.
\]
The operator $f\mapsto\overline{f}$ is a bounded linear operator.
\end{theorem}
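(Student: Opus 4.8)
The plan is to follow the original construction of P.\ W.\ Jones as recalled above, but to damp the Whitney cubes of the complement that are ``far'' from $\overline{\Omega}$. Recall that Jones' extension has the shape
\[
	\overline{f}(x) = \sum_{Q_j \in \mathcal{W}_2} \varphi_j(x)\, f_{Q_j^{\ast}},
	\qquad x \in \mathbf{R}^n \setminus \overline{\Omega},
\]
where $\mathcal{W}_2$ is a Whitney decomposition of $(\overline{\Omega})^c$, $\{\varphi_j\}$ is the associated partition of unity subordinate to a mild dilation of the $Q_j$, and $Q_j^{\ast} \in \mathcal{W}_1$ (a Whitney decomposition of $\Omega$) is a ``reflected'' cube chosen so that $\ell(Q_j^{\ast}) \simeq \ell(Q_j)$ and $d(Q_j, Q_j^{\ast}) \lesssim \ell(Q_j)$; the defining property \eqref{UD} of a uniform domain is what guarantees such a reflection with uniformly bounded Whitney distance, and hence gives the $BMO$ bound $[\overline{f}]_{BMO^\infty(\mathbf{R}^n)} \leq C(K)[f]_{BMO^\infty(\Omega)}$. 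The new ingredient is to fix a smooth cutoff $\theta_\varepsilon$ with $\theta_\varepsilon \equiv 1$ on $\overline{\Omega_{\varepsilon/2}}$, $\operatorname{supp}\theta_\varepsilon \subset \Omega_\varepsilon$, and $|\nabla\theta_\varepsilon| \leq C/\varepsilon$, and to set
\[
	\overline{f}(x) := \begin{cases} f(x), & x \in \Omega, \\[2pt] \theta_\varepsilon(x) \displaystyle\sum_{Q_j \in \mathcal{W}_2} \varphi_j(x)\, f_{Q_j^{\ast}}, & x \in \mathbf{R}^n \setminus \overline{\Omega}. \end{cases}
\]
By construction $\operatorname{supp}\overline{f} \subset \overline{\Omega_\varepsilon}$, and $\overline{f}$ is obtained from $f$ by a bounded linear procedure.

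The work is then to prove the two seminorm bounds. First, for the $L^1_{\mathrm{ul}}$ part: since $\theta_\varepsilon$ is bounded and supported in $\Omega_\varepsilon$, for any ball $B_{r_0}(x)$ one has $\int_{B_{r_0}(x)} |\overline{f}| \leq \int_{B_{r_0}(x)\cap\Omega} |f| + C\sum_j \int_{B_{r_0}(x)\cap\operatorname{supp}\varphi_j \cap \Omega_\varepsilon} |f_{Q_j^{\ast}}|\,dy$. For the cubes $Q_j$ meeting $B_{r_0}(x) \cap \Omega_\varepsilon$ we have $\ell(Q_j) \lesssim \varepsilon$ and $d(Q_j,\partial\Omega) \lesssim \varepsilon$, so the reflected cubes $Q_j^{\ast}$ all lie within a bounded multiple of $\varepsilon + r_0$ of $x$, intersect $\Omega$, and have side length $\gtrsim$ some positive power controlled by $\varepsilon$; hence $|f_{Q_j^{\ast}}| \leq C_{\varepsilon}\,\|f\|_{L^1_{\mathrm{ul}}(\Omega)} + C[f]_{BMO^\infty}$ by a standard telescoping-of-averages argument (of exactly the type used repeatedly in the proof of Proposition \ref{E1}), and there are only $O_\varepsilon(1)$ relevant cubes per unit ball. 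This gives $[\overline{f}]_{L^1_{\mathrm{ul}}(\Omega_{2\varepsilon})} \leq C(K,\varepsilon)\,[f]_{bmo^\infty_\infty(\Omega)}$.

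Second, for the $BMO^\infty$ part one decomposes any ball $B = B_r(y) \subset \mathbf{R}^n$ into three regimes. If $B \subset \Omega$ or $B \cap \overline{\Omega_\varepsilon} = \emptyset$ there is nothing to do (the latter gives $\overline{f}\equiv 0$ on $B$). If $r \lesssim \varepsilon$ and $B$ is near $\partial\Omega$ or inside $\Omega_\varepsilon$, then on $B$ one writes $\overline{f} = \theta_\varepsilon \cdot (\text{Jones' extension } Ef)$ and uses the product estimate: the oscillation of a product $gh$ over $B$ is bounded by $\|g\|_\infty \,\mathrm{osc}_B h + (\operatorname{osc}_B g)\,\frac{1}{|B|}\int_B |h|$, so one needs $\mathrm{osc}_B \theta_\varepsilon \leq C r/\varepsilon$ together with a local $L^1$ bound on $Ef$ over $B$, which again comes from the $L^1_{\mathrm{ul}}$ control of $f$ and telescoping as in the first part; here the factor $r/\varepsilon$ is harmless precisely because $r \lesssim \varepsilon$. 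Finally, for $r \gtrsim \varepsilon$ one just bounds $\frac{1}{|B|}\int_B |\overline{f} - \overline{f}_B| \leq \frac{2}{|B|}\int_B |\overline{f}| \leq C \varepsilon^{-n}\int_{B} |\overline{f}|$, which is controlled by the already-established $L^1_{\mathrm{ul}}$ bound (covering $B$ by $O((r/\varepsilon)^n)$ unit-scale balls and noting the volume normalization absorbs this). Combining the three regimes yields $[\overline{f}]_{BMO^\infty(\mathbf{R}^n)} \leq C(K,\varepsilon)\,[f]_{bmo^\infty_\infty(\Omega)}$.

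I expect the main obstacle to be the bookkeeping near the ``seam'' $\partial\Omega$: one must verify that the modified sum still agrees with $f$ in a neighborhood on the $\Omega$ side well enough that no spurious jump is created across $\partial\Omega$, i.e.\ that Jones' original matching estimate $|Ef(x) - f(x')|$ for $x \in (\overline{\Omega})^c$ near $x' \in \partial\Omega$ survives multiplication by $\theta_\varepsilon$ (it does, since $\theta_\varepsilon \equiv 1$ there), and that the reflected cubes $Q_j^{\ast}$ associated to the retained cubes $Q_j \subset \Omega_\varepsilon$ do not wander outside a fixed neighborhood of $\partial\Omega$ — this is where uniformity of the domain (the constant $K$ in \eqref{UD}) and the relation $\ell(Q_j)\simeq d(Q_j,\partial\Omega)$ are used to keep all constants of the form $C(K,\varepsilon)$. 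Everything else is a routine adaptation of Jones' argument together with the telescoping-average technique already exhibited in the proof of Proposition \ref{E1}.
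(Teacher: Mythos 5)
Your overall route --- form Jones' extension $Ef$ and multiply by a cutoff $\theta_\varepsilon$ supported in $\Omega_\varepsilon$ --- is viable and is in fact the alternative the paper itself mentions in Remark \ref{POZ}; the paper's actual proof avoids the cutoff by splitting $f=f^*+g$, where $g$ is the piecewise-constant function of averages of $f$ over the Whitney cubes of $\Omega$ of side length $\gtrsim\varepsilon$ (so $\|g\|_{L^\infty}\le C_{n,\varepsilon}[f]_{\Gamma_\infty}$), extending $f^*=f-g$ by Jones (its values on complement cubes matched to large interior cubes are then zero, so the support condition is automatic), and adding back the zero extension of $g$. In either approach the real work is the uniformly local $L^1$ bound of the extension on $\Omega_\varepsilon\setminus\Omega$ --- exactly the point Remark \ref{POZ} flags --- and this is where your argument has a genuine gap.

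Concretely, you claim that only $O_\varepsilon(1)$ complement Whitney cubes $Q_j$ meet a unit ball intersected with $\Omega_\varepsilon\setminus\overline{\Omega}$, and that the reflected cubes $Q_j^{\ast}$ have side length bounded below by an $\varepsilon$-dependent constant, whence a $j$-uniform bound $|f_{Q_j^{\ast}}|\le C_\varepsilon\|f\|_{L^1_{\mathrm{ul}}}+C[f]_{BMO^\infty}$. Both claims fail: Whitney cubes of $(\overline{\Omega})^c$ satisfy $\ell(Q_j)\simeq d(Q_j,\partial\Omega)$, so arbitrarily small cubes accumulate at $\partial\Omega$ and infinitely many of them meet any unit ball touching the boundary, while $\ell(Q_j^{\ast})\ge\ell(Q_j)$ gives no lower bound on $\ell(Q_j^{\ast})$. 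Moreover, for $f=\log\min(d_\Omega,1)\in bmo^\infty_\infty(\Omega)$ the average of $|f|$ over a cube of side $t$ adjacent to $\partial\Omega$ is of order $\log(1/t)$, so no uniform per-cube bound of the claimed type exists, and ``per-cube bound times number of cubes'' cannot close the estimate. The correct argument keeps the volume weights: $\int_{B\cap Q_j}|f_{Q_j^{\ast}}|\,dy\le\bigl(|Q_j|/|Q_j^{\ast}|\bigr)\int_{Q_j^{\ast}}|f|\,dy\le\int_{Q_j^{\ast}}|f|\,dy$, then one groups the $Q_j$ by their reflected cube, uses Jones' estimate $d(Q_j,Q_j^{\ast})\le 65K^2\ell(Q_j)$ together with disjointness to see that each reflected cube is charged at most $C(K)$ times and that all charged cubes lie in one cube of side $C(K)(1+\varepsilon)$, and finally subdivides that cube into unit-scale pieces to invoke $[f]_{\Gamma_\infty}$; this weighted-sum/bounded-overlap argument is what the paper's proof carries out. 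The same issue touches your small-ball $BMO$ regime: the local average $\frac{1}{|B_r|}\int_{B_r}|Ef|\,dy$ is only $O(\log(1/r))$, not $O_\varepsilon(1)$; the factor $r/\varepsilon$ coming from $\operatorname{osc}_B\theta_\varepsilon$ does absorb the logarithm, but as written your justification is incomplete.
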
 
This can be proved almost along the same way as in \cite{PJ}.
 We shall give an explicit proof.

\begin{proof}
Let $k_{\varepsilon}$ be the smallest integer such that $2^{-k_{\varepsilon}} < \frac{\varepsilon}{5\sqrt{n}}$. So $2^{-k_{\varepsilon}} \geq \frac{\varepsilon}{10\sqrt{n}}$. Let $E = \{ Q_j \}$ be the Whitney decomposition of $\Omega$ and $E^{'} = \{ Q_j^{'} \}$ be the Whitney decomposition of $\Omega^{c}$. Let $E_\ast$ be the set of Whitney cubes in $E$ whose side length is strictly greater than $2^{-k_{\varepsilon}}$. For each $Q_m \in E_\ast$, we define a function $g_m$ on $\Omega$ by
\begin{eqnarray*}
	g_m(x) := \left \{
\begin{array}{ll}
	f_{Q_m}, & \mathrm{if} \; \, x \in Q_m \\
	0, & \mathrm{else}
\end{array}
\right.
\end{eqnarray*}
and we further define a function $g$ on $\Omega$ by 
\[
g := \sum_{Q_m \in E_\ast} g_m.
\]
Here $f_{Q_m} = \frac{1}{|Q_m|} \int_{Q_m} f(y) dy$ for each $Q_m \in E_\ast$. Let $\widetilde{g}$ be the zero extension of $g$ from $\Omega$ to $\mathbf{R}^n$. 

Without loss of generality, we assume that the radius $r_0$ of the ball equals $1$ in defining the space $L_{\mathrm{ul}}^1(\Omega)$. Notice that
\[
\| g_m \|_{L^\infty(\Omega)} \leq \frac{1}{|Q_m|} \cdot \int_{Q_m} |f| \, dy.
\]
Let $k_0$ be the smallest integer such that $2^{-k_0} < \frac{2}{\sqrt{n}}$. If $\ell(Q_m) \leq 2^{-k_0}$, then $\| f \|_{L^1(Q_m)} \leq [f]_{\Gamma_{\infty}}$. In this case, as $\ell(Q_m) > 2^{-k_\varepsilon}$, 
\[
\| g_m \|_{L^\infty(\Omega)} \leq \frac{1}{|Q_m|} \cdot \int_{Q_m} |f| \, dy \leq (\frac{10\sqrt{n}}{\varepsilon})^n \cdot [f]_{\Gamma_{\infty}}.
\]
If $\ell(Q_m) > 2^{-k_0}$, we divide $Q_m$ into $(\frac{\ell(Q_m)}{2^{-k_0}})^n$ small subcubes of side length $2^{-k_0}$. Hence,
\[
\int_{Q_m} |f| \, dy = \sum_{i=1}^{(\ell(Q_m)/2^{-k_0})^n} \int_{Q_m^i} |f| \, dy \leq (\frac{\ell(Q_m)}{2^{-k_0}})^n \cdot [f]_{\Gamma_{\infty}} \leq |Q_m| \cdot n^{\frac{n}{2}} \cdot [f]_{\Gamma_{\infty}},
\]
in this case $\| g_m \|_{L^\infty(\Omega)} \leq n^{\frac{n}{2}} \cdot [f]_{\Gamma_{\infty}}$. Therefore,
\[
\| g \|_{L^\infty(\Omega)} \leq C_{n,\varepsilon} \cdot [f]_{\Gamma_{\infty}}
\]
and we deduce that $g \in bmo_\infty^\infty(\Omega)$ as $L^\infty(\Omega) \subset bmo_\infty^\infty(\Omega)$.

Let $f^\ast := f -g \in bmo_\infty^\infty(\Omega)$. We do Jones extension to $f^\ast$. If $\Omega$ is unbounded, for each $Q_j^{'} \in E^{'}$, we find a nearset $Q_j \in E$ satisfying $\ell(Q_j) \geq \ell(Q_j^{'})$. We define that $\widetilde{f}^\ast = f^\ast$ on $\Omega$ and $\widetilde{f}^\ast(x) = f^\ast_{Q_j}$ for $x \in Q_j^{'}$. If $\Omega$ is bounded, we pick $Q_0 \in E$ such that $\ell(Q_0) = \underset{Q_j \in E}{\mathrm{sup}} \ell(Q_j)$. We define that $\widetilde{f}^\ast = f^\ast$ on $\Omega$, $\widetilde{f}^\ast(x) = f^\ast_{Q_j}$ for $x \in Q_j^{'}$ where $\ell(Q_j^{'}) \leq \ell(Q_0)$ and $\widetilde{f}^\ast(x) = f^\ast_{Q_0}$ for $x \in Q_j^{'}$ where $\ell(Q_j) > \ell(Q_0)$. By Jones \cite{PJ}, $\widetilde{f}^{\ast} \in BMO$ and $[\widetilde{f}^{\ast}]_{BMO} \leq C_K \cdot [f^\ast]_{BMO^\infty(\Omega)}$. By this extension, for $\widetilde{f}^\ast(x) \neq 0$, either $x \in \Omega$ or $x \in Q_j^{'}$ such that $\ell(Q_j^{'}) \leq 2^{-k_{\varepsilon}}$. Since $d(Q_j^{'},\Omega) \leq 4\sqrt{n} \cdot \ell(Q_j^{'})$, pick $x \in \overline{Q_j^{'}}$ and $z \in \Gamma$ such that $|x -z |= d(Q_j^{'},\Omega)$. For any $y \in Q_j^{'}$, $|y-z| \leq |y-x|+|x-z| \leq 5\sqrt{n} \cdot \ell(Q_j^{'})$. So $\mathrm{int} \, Q_j^{'} \subset B_{5\sqrt{n} \cdot \ell(Q_j^{'})}(z)$ for some $z \in \Gamma$. Since $5\sqrt{n} \cdot \ell(Q_j^{'}) \leq 5\sqrt{n} \cdot 2^{-k_{\varepsilon}} < \varepsilon$, $\mathrm{int} \, Q_j^{'} \subset \Omega_{\varepsilon}$. Let $\widetilde{f} := \widetilde{f}^\ast + \widetilde{g}$ and $\overline{f} = \widetilde{f} \, |_{\Omega_{2\varepsilon}}$, we have that $\mathrm{supp} \; \overline{f} \subset \overline{\Omega_{\varepsilon}}$ and by previous calculation, 
\begin{eqnarray*}
[\overline{f}]_{BMO^\infty(\Omega_{2\varepsilon})} &\leq& [\widetilde{f}]_{BMO} \leq [\widetilde{f}^\ast]_{BMO} + [\widetilde{g}]_{BMO} \leq C_K \cdot [f^\ast]_{BMO^\infty(\Omega)} + 2||g||_{\infty} \\
&\leq& C_{K,n,\varepsilon} \cdot ([f]_{BMO^\infty(\Omega)} + [f]_{\Gamma_{\infty}}).
\end{eqnarray*}

Let $B(x)$ denotes the ball of radius $1$ centered at $x$ and $\Gamma^\varepsilon := \{ x \in \Omega^{c} \, | \, d_\Omega(x) < \varepsilon \}$. For $B(x) \cap \Omega_\varepsilon \neq \emptyset$,
\[
\int_{B(x) \cap \Omega_\varepsilon} |\overline{f}| \, dy = \int_{B(x) \cap \Omega} |f| \, dy + \int_{B(x) \cap \Gamma^\varepsilon} |\overline{f}| \, dy.
\]
The first integral on the right hand side is directly estimated by $[f]_\infty$, so we only need to consider the second integral. Let $Q_\ast^{'}$ be a largest Whitney cube in $E^{'}$ that intersects $B(x) \cap \Gamma^\varepsilon$. For $Q_j^{'} \in E^{'}$, \cite[Lemma 2.10]{PJ} says that if $Q_j \in E$ is a nearest Whitney cube satisfying $\ell(Q_j) \geq \ell(Q_j^{'})$, then $d(Q_j,Q_j^{'}) \leq 65K^2 \cdot \ell(Q_j^{'})$. Consider $Q_j^{'} \in E^{'}$ such that $Q_j^{'} \cap B(x) \cap \Gamma^\varepsilon \neq \emptyset$, let $x_j \in Q_j$ where $Q_j$ is a nearest Whitney cube satisfying $\ell(Q_j) \geq \ell(Q_j^{'})$, let $x_j^{'} \in Q_j^{'} \cap B(x) \cap \Gamma^\varepsilon$ and $x_\ast^{'} \in Q_\ast^{'} \cap B(x) \cap \Gamma^\varepsilon$. By choosing $K$ large such that $K^2 \geq 2\sqrt{n}$, we have that
\[
|x_j - x_\ast^{'}| \leq |x_\ast^{'} - x_j^{'}| + |x_j^{'} - x_j| \leq 2 + 2\sqrt{n} \cdot \ell(Q_j^{'}) + 65K^2 \cdot \ell(Q_j^{'}) \leq 2 + 66K^2 \cdot \ell(Q_j).
\]
Since $\ell(Q_j) \leq 2\ell(Q_j^{'}) \leq 2\ell(Q_\ast^{'}) \leq 2\ell(Q_\ast)$ where $Q_\ast \in E$ is a nearest cube satisfying $\ell(Q_\ast) \geq \ell(Q_\ast^{'})$, $|x_j - x_\ast^{'}| \leq 2 + 132K^2 \cdot \ell(Q_\ast)$.

If $B(x) \cap \Gamma \neq \emptyset$, then $\sqrt{n} \cdot \ell(Q_\ast^{'}) \leq d(Q_\ast^{'},\Omega) \leq 2$. Hence $\ell(Q_\ast) \leq 2\ell(Q_\ast^{'}) \leq \frac{4}{\sqrt{n}}$, for any $x_j \in Q_j$, $|x_j - x_\ast^{'}| < 2 + 133K^2 \cdot \frac{4}{\sqrt{n}}$. Consider the cube $\widetilde{Q_\ast^{'}}$ with center $x_\ast^{'}$ and side length $4+ \frac{1064K^2}{\sqrt{n}}$. For each $Q_j^{'} \in E^{'}$ such that $Q_j^{'} \cap B(x) \cap \Gamma^\varepsilon \neq \emptyset$, the corresponding nearest $Q_j \in E$ such that $\ell(Q_j) \geq \ell(Q_j^{'})$ we choose to define $\widetilde{f}^\ast$ is contained in $\widetilde{Q_\ast^{'}}$, i.e. $Q_j \subset \widetilde{Q_\ast^{'}}$. Hence,
\[
\int_{B(x) \cap \Gamma^\varepsilon} |\overline{f}| \, dy = \underset{Q_j^{'} \cap B(x) \cap \Gamma^\varepsilon \neq \emptyset}{\underset{Q_j^{'} \in E^{'},}{\sum}} \int_{Q_j^{'} \cap B(x) \cap \Gamma^\varepsilon} |f_{Q_j}^\ast| \, dy \leq \int_{\widetilde{Q_\ast^{'}} \cap \Omega} |f^\ast| \, dy.
\]
Let $p$ be the largest integer such that $2^{-p} > 4 + \frac{1064K^2}{\sqrt{n}}$, so $2^{-p} \leq 8+ \frac{2128K^2}{\sqrt{n}}$. Let $\widetilde{Q_\ast^{'}}$ be contained in a larger cube $\widetilde{Q}$ where $\widetilde{Q}$ has center $x_\ast^{'}$ and side length $2^{-p}$. We can divide $\widetilde{Q}$ into $(\frac{2^{-p}}{2^{-k_0}})^n$ subcubes of side length $2^{-k_0}$, thus
\[
\int_{\widetilde{Q_\ast^{'}} \cap \Omega} |f^\ast| \, dy \leq \sum_{i=1}^{(2^{-p}/2^{-k_0})^n} \int_{\widetilde{Q_i} \cap \Omega} |f^\ast| \, dy \leq (\frac{2^{-p}}{2^{-k_0}})^n \cdot [f^\ast]_{\Gamma_{\infty}} \leq C_{K,n} \cdot [f^\ast]_{\Gamma_{\infty}}.
\]

If $B(x) \cap \Gamma = \emptyset$, i.e. $B(x) \subset \overline{\Omega}^{c}$. Let $E_1^{'} := \{ Q_j^{'} \in E^{'} \, | \, Q_j^{'} \cap B(x) \neq \emptyset \}$. Let $\ell_m := \underset{Q_j^{'} \in E_1^{'}}{\mathrm{inf}} \ell(Q_j^{'})$ and $Q_\ast^{'}$ be a largest $Q_j^{'} \in E_1^{'}$. If $\ell_m = 0$, then there exists $z \in \Gamma \cap \partial B(x)$. In this case, $\sqrt{n} \cdot \ell(Q_\ast^{'}) \leq d(Q_\ast^{'},\Omega) \leq 2$. Therefore same argument as in the case where $B(x) \cap \Gamma \neq \emptyset$ gives that $\| \overline{f} \|_{L^1(B(x) \cap \Gamma^\varepsilon)} \leq C_{K,n} \cdot [f^\ast]_\infty$. If $0<\ell_m \leq 2$, then pick $Q_m^{'} \in E_1^{'}$ such that $\ell(Q_m^{'}) = \ell_m$. Since $\sqrt{n} \cdot \ell(Q_\ast^{'}) \leq d(Q_\ast^{'},\Omega) \leq 2+ \sqrt{n} \cdot \ell(Q_m^{'}) + d(Q_m^{'},\Omega) \leq 2+ 10\sqrt{n}$, we have that $\ell(Q_\ast) \leq \frac{4}{\sqrt{n}} + 20$. Hence $|x_j - x_\ast^{'}| \leq 2+ 133K^2 \cdot (\frac{4}{\sqrt{n}} + 20)$. Following the argument as in the case where $B(x) \cap \Gamma \neq \emptyset$, we can deduce that $\| \overline{f} \|_{L^1(B(x) \cap \Gamma^\varepsilon)} \leq C_{K,n} \cdot [f^\ast]_{\Gamma_{\infty}}$. If $\ell_m >2$, then $B(x)$ intersects at most $2^n$ Whitney cubes in $E^{'}$. Without loss of generality, assume that $E_1^{'}$ has $2^n$ elements. Then
\[
\int_{B(x) \cap \Gamma^\varepsilon} |\overline{f}| \, dy \leq \sum_{Q_i^{'} \in E_1^{'}} \int_{B(x) \cap Q_i^{'}} |f_{Q_i}^\ast| \, dy \leq \sum_{Q_i^{'} \in E_1^{'}} \frac{|B(x) \cap Q_i^{'}|}{|Q_i|} \cdot \int_{Q_i} |f^\ast| \, dy.
\]
Divide $Q_i$ into $\left(\frac{\ell(Q_i)}{2^{-k_0}}\right)^n$ subcubes of side length $2^{-k_0}$, we have that
\[
\int_{Q_i} |f^\ast| \,dy \leq \left(\frac{\ell(Q_i)}{2^{-k_0}}\right)^n \cdot [f^\ast]_{\Gamma_{\infty}} \leq |Q_i| \cdot n^\frac{n}{2} \cdot [f^\ast]_{\Gamma_{\infty}}.
\]
Therefore,
\[
\int_{B(x) \cap \Gamma^\varepsilon} |\overline{f}| \, dy \leq \big( \sum_{Q_i^{'} \in E_1^{'}} |B(x) \cap Q_i^{'}| \,\big) \cdot n^\frac{n}{2} \cdot [f^\ast]_{\Gamma_{\infty}} \leq C_n \cdot [f^\ast]_{\Gamma_{\infty}}.
\]
Since $[f^\ast]_{\Gamma_{\infty}} \leq [f]_{\Gamma_{\infty}} + [g]_{\Gamma_{\infty}}$ and $[g]_{\Gamma_{\infty}}$ is estimated by $C_{n,\varepsilon} \cdot [f]_{\Gamma_{\infty}}$, we are done.
\end{proof}

As an application we give an estimate for the product of a H$\ddot{\mathrm{o}}$lder function and a function in $bmo^\infty_\infty$.
 We first recall properties of point multipliers.
 It is known that for a local hardy space $h^1=F^0_{1,2}$ \cite[Theorem 3.18]{Sa}, there is a constant $C$ such that
\begin{align} \label{MHA}
	\| \varphi g \|_{F^0_{1,2}} \leq 
	C\| \varphi \|_{C^\gamma} \|g\|_{F^0_{1,2}} \quad
	g \in F^0_{1,2}
\end{align}
for $\varphi \in C^\gamma(\mathbf{R}^n)$, $\gamma \in (0,1)$, where
\[
	\| \varphi \|_{C^\gamma} = \sup_{x\in\mathbf{R}^n} \left|\varphi(x)\right| + \sup_{\substack{x,y\in\mathbf{R}^n \\ x \neq y}} \left|\varphi(x)-\varphi(y)\right| \bigm/ |x-y|^\gamma; 
\]
see e.g.\ \cite[Remark 4.4]{Sa}.
 Since
\[
	bmo = BMO^\infty(\mathbf{R}^n) \cap L^1_\mathrm{ul}(\mathbf{R}^n)
\]
equals to $F^0_{\infty,2}$ \cite[Theorem 3.26]{Sa}, it is a dual space of $h^1=F^0_{1,2}$ \cite[Theorem 3.22]{Sa}.
 Thus
\begin{equation} \label{ME}
	\| \varphi f \|_{bmo} \leq C \| \varphi \|_{C^\gamma} \| f \|_{bmo}.
\end{equation}
\begin{theorem} \label{PMU}
Let $\Omega \subset \mathbf{R}^n$ be a uniform domain. Let $\varphi \in C^\gamma(\Omega)$, $\gamma \in (0,1)$. For each $f \in bmo^\infty_\infty(\Omega)$, the function $\varphi f \in bmo^\infty_\infty(\Omega)$ satisfies
\[
	\| \varphi f \|_{bmo^\infty_\infty(\Omega)} \leq C \| \varphi \|_{C^\gamma(\Omega)} \| f \|_{bmo^\infty_\infty(\Omega)}
\]
with $C$ independent of $\varphi$ and $f$.
\end{theorem}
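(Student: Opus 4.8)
The plan is to reduce the estimate to the pointwise multiplier inequality \eqref{ME} on the whole space $\mathbf{R}^n$: extend $f$ to an element of $bmo(\mathbf{R}^n)$ and $\varphi$ to an element of $C^\gamma(\mathbf{R}^n)$, with control of the respective norms, multiply the two extensions in $\mathbf{R}^n$, and restrict the product back to $\Omega$. The restriction step is harmless since, for any subdomain, both the $BMO^\infty$ seminorm and the $L^1_{\mathrm{ul}}$ norm of a restriction are bounded by those of the ambient function; here we use that $\Gamma_\infty=\Omega$, so that $bmo^\infty_\infty(\Omega)=BMO^\infty(\Omega)\cap L^1_{\mathrm{ul}}(\Omega)$, in accordance with the definition of $bmo(\mathbf{R}^n)$.

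To extend $f$, apply Theorem \ref{PJM} with $\varepsilon=1$: this produces $\overline f\in bmo^\infty_\infty(\Omega_2)$ with $\operatorname{supp}\overline f\subset\overline{\Omega_1}$ and $\|\overline f\|_{bmo^\infty_\infty(\Omega_2)}\leq C\|f\|_{bmo^\infty_\infty(\Omega)}$. I then claim the zero extension of $\overline f$ to $\mathbf{R}^n$, still denoted $\overline f$, lies in $bmo(\mathbf{R}^n)$ with the same bound. Its $L^1_{\mathrm{ul}}(\mathbf{R}^n)$ norm is unchanged because $\overline f$ vanishes outside $\overline{\Omega_1}\subset\Omega_2$. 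For the $BMO^\infty(\mathbf{R}^n)$ seminorm, fix the radius $1/2$ in the $L^1_{\mathrm{ul}}$ norm and split over a closed ball $B_r(x)$ into three cases: if $B_r(x)\subset\Omega_2$, its mean oscillation is controlled by $[\overline f]_{BMO^\infty(\Omega_2)}$; if $B_r(x)$ meets $\Omega_2^c$ and $r<1/2$, then, since $d(\overline{\Omega_1},\Omega_2^c)\geq1$, the ball is disjoint from $\operatorname{supp}\overline f$ and the mean oscillation is $0$; and if $r\geq1/2$, the mean oscillation is at most $2|B_r(x)|^{-1}\int_{B_r(x)}|\overline f|$, which is bounded by covering $B_r(x)$ with at most $Cr^n$ balls of radius $1/2$ and invoking $\|\overline f\|_{L^1_{\mathrm{ul}}(\mathbf{R}^n)}$. (Equivalently, one observes that the construction in the proof of Theorem \ref{PJM} already builds a function on all of $\mathbf{R}^n$ enjoying these properties.) To extend $\varphi$, use the McShane-type formula $\varphi^\ast(x):=\sup_{a\in\Omega}\bigl(\varphi(a)-L|x-a|^\gamma\bigr)$, where $L$ denotes the $C^\gamma$-seminorm of $\varphi$ over $\Omega$, and then truncate at $\pm\|\varphi\|_{L^\infty(\Omega)}$; since $t\mapsto t^\gamma$ is subadditive for $\gamma\in(0,1)$ and truncation does not increase a $C^\gamma$-seminorm, the resulting $\widetilde\varphi\in C^\gamma(\mathbf{R}^n)$ extends $\varphi$ and satisfies $\|\widetilde\varphi\|_{C^\gamma(\mathbf{R}^n)}\leq C\|\varphi\|_{C^\gamma(\Omega)}$.

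With the two extensions in hand, \eqref{ME} gives $\|\widetilde\varphi\,\overline f\|_{bmo(\mathbf{R}^n)}\leq C\|\widetilde\varphi\|_{C^\gamma(\mathbf{R}^n)}\|\overline f\|_{bmo(\mathbf{R}^n)}\leq C\|\varphi\|_{C^\gamma(\Omega)}\|f\|_{bmo^\infty_\infty(\Omega)}$. Since $\widetilde\varphi\,\overline f=\varphi f$ on $\Omega$ and every ball contained in $\Omega$ is a ball in $\mathbf{R}^n$, one has $[\varphi f]_{BMO^\infty(\Omega)}\leq[\widetilde\varphi\,\overline f]_{BMO^\infty(\mathbf{R}^n)}$ and likewise $\|\varphi f\|_{L^1_{\mathrm{ul}}(\Omega)}\leq\|\widetilde\varphi\,\overline f\|_{L^1_{\mathrm{ul}}(\mathbf{R}^n)}$; adding these gives $\|\varphi f\|_{bmo^\infty_\infty(\Omega)}\leq C\|\varphi\|_{C^\gamma(\Omega)}\|f\|_{bmo^\infty_\infty(\Omega)}$, which in particular shows $\varphi f\in bmo^\infty_\infty(\Omega)$. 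The only step that is not entirely formal is the passage from the Jones-type extension on $\Omega_2$ to a function in $bmo(\mathbf{R}^n)$ by zero extension across the collar $\Omega_2\setminus\overline{\Omega_1}$; this is exactly where the compactly supported refinement provided by Theorem \ref{PJM}, rather than the bare Jones extension of Theorem \ref{PJ}, is needed, since $BMO^\infty(\mathbf{R}^n)$-control alone does not yield membership in $L^1_{\mathrm{ul}}(\mathbf{R}^n)$, which \eqref{ME} requires.
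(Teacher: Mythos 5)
Your proposal is correct and follows essentially the same route as the paper: extend $\varphi$ by a McShane-type formula with truncation, extend $f$ by Theorem \ref{PJM}, apply the multiplier estimate \eqref{ME} in $bmo(\mathbf{R}^n)$, and restrict back to $\Omega$. The only difference is that you spell out the passage from the compactly supported extension in $bmo^\infty_\infty(\Omega_{2\varepsilon})$ to a genuine element of $bmo(\mathbf{R}^n)$ via zero extension across the collar, a step the paper leaves implicit, and your three-case verification of it is sound.
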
 
\begin{proof}
By \cite{McS}, there exists $\overline{\varphi} \in C^{\gamma}(\mathbf{R}^n)$ such that $\overline{\varphi} \, |_{\Omega} = \varphi$ and 
\[
\| \overline{\varphi} \|_{C^{\gamma}(\mathbf{R}^n)} \leq \| \varphi \|_{C^{\gamma}(\Omega)}.
\]
For our current purpose it suffices to set $\overline{\varphi} = \mathrm{max} \{ \mathrm{min} \{ \varphi_\ast, \| \varphi \|_\infty \}, - \| \varphi \|_\infty \}$ with
\[
\varphi_\ast(x) = \underset{y \in \Omega}{\mathrm{inf}} \left\{ \varphi(y) + [\varphi]_{C^\gamma} \cdot |x-y|^\gamma \right\},
\]
where $\| \varphi \|_{C^\gamma(\Omega)} = \| \varphi \|_{L^\infty(\Omega)} + [\varphi]_{C^\gamma(\Omega)}$, $\| \varphi \|_{L^\infty(\Omega)} = \underset{x \in \Omega}{\mathrm{sup}} | \varphi (x) |$ and $[\varphi]_{C^\gamma(\Omega)} = \underset{x,y \in \Omega}{\mathrm{sup}} \frac{|\varphi(x) - \varphi(y)|}{|x-y|^\gamma}$; we often suppress $\Omega$. By definition $\varphi_\ast(x) \leq \varphi(x)$. Moreover, since $\varphi(x) \leq \varphi(y)+[\varphi]_{C^\gamma} \cdot |x-y|^\gamma$ for $x,y \in \Omega$, we see that $\varphi(x) \leq \varphi_\ast(x)$ which implies $\varphi = \varphi_\ast$ on $\Omega$. For any $x \in \mathbf{R}^n$ and $\varepsilon > 0$ there is $y_\varepsilon \in \Omega$ such that
\[
\varphi(y_\varepsilon) + [\varphi]_{C^\gamma} \cdot |x-y_\varepsilon|^\gamma \leq \varphi_\ast(x) + \varepsilon.
\]
For $x_1 \in \mathbf{R}^n$ we observe that
\[
\varphi_\ast(x_1) - \varphi_\ast(x) \leq \varphi(y_\varepsilon) + [\varphi]_{C^\gamma} \cdot |x_1-y_\varepsilon|^\gamma - \left\{ \varphi(y_\varepsilon) + [\varphi]_{C^\gamma} \cdot |x-y_\varepsilon|^\gamma \right\} + \varepsilon \leq [\varphi]_{C^\gamma} \cdot |x-x_1|^\gamma + \varepsilon.
\]
Since $\varepsilon$ is arbitrary, we see that $\varphi_\ast(x_1) - \varphi_\ast(x) \leq [\varphi]_{C^\gamma} \cdot |x-x_1|^\gamma$. Interchanging the role of $x_1$ and $x$, we conclude that
\[
[\varphi_\ast]_{C^\gamma(\mathbf{R}^n)} \leq [\varphi]_{C^\gamma(\Omega)}.
\]
Since $\| \varphi \|_\infty < \infty$, $\overline{\varphi} = \varphi$ on $\Omega$ and $\overline{\varphi}$ is still H$\ddot{\mathrm{o}}$lder. More precisely, $[\overline{\varphi}]_{C^\gamma} \leq [\varphi_\ast]_{C^\gamma}$. By definition $\| \overline{\varphi} \|_\infty \leq \| \varphi \|_\infty$ so we conclude that $\| \overline{\varphi} \|_{C^\gamma} \leq \| \varphi \|_{C^\gamma}$.

Extending $f \in bmo_\infty^\infty(\Omega)$ to $\overline{f} \in bmo$ by Theorem \ref{PJM}, we conclude from multiplication estimate (\ref{ME}) that
\begin{eqnarray*}
\| \varphi f \|_{bmo_\infty^\infty(\Omega)} &\leq& \| \overline{\varphi} \overline{f} \|_{bmo} \\
&\leq& C \cdot \| \overline{\varphi} \|_{C^{\gamma}(\mathbf{R}^n)} \cdot \| \overline{f} \|_{bmo} \\
&\leq& C \cdot \| \varphi \|_{C^{\gamma}(\Omega)} \cdot \| f \|_{bmo_\infty^\infty(\Omega)}.
\end{eqnarray*}
\end{proof}

\begin{remark} \label{POZ}
If we prove that the extension $f \mapsto \overline{f}$ constructed in Theorem \ref{PJ} is bounded from $bmo_\infty^\infty$ to $bmo = BMO \cap L_{\mathrm{ul}}^1$, then the support condition will follow by taking $\varphi \in C^\gamma(\mathbf{R}^n)$ in Theorem \ref{PMU} as a cutoff function of $\Omega$, i.e. $\varphi \equiv 1$ on $\Omega$ with $\mathrm{supp} \varphi \subset \Omega_\varepsilon$. In other words, we consider $f \mapsto \varphi \overline{f}$. However, the proof that $\overline{f} \in L_{\mathrm{ul}}^1$ needs some argument so we give a direct proof of Theorem \ref{PJM}.
\end{remark}

For $BMO_b^{\mu,\infty}$ function in $\Omega$ it is easy to see that its zero extension is in $BMO$ space; see e.g. \cite[Lemma 4]{BGST}.

\begin{theorem} \label{ZE}
Let $\Omega$ be an arbitrary domain in $\mathbf{R}^n$. Assume that $\mu \in (0,\infty]$. For $f \in BMO_b^{\mu,\nu}(\Omega)$ with $\nu \geq 2\mu$, let $f_0$ be the zero extension to $\mathbf{R}^n$, i.e. $f_0(x) = 0$ for $x \in \Omega^c$ and $f_0(x) = f(x)$ for $x \in \Omega$. Then $f_0 \in BMO^\mu(\mathbf{R}^n)$ and $[f_0]_{BMO^\mu} \leq C [f]_{BMO_b^{\mu,\nu}}$ with $C$ independent of $f$.
\end{theorem}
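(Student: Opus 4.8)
The plan is to estimate the mean oscillation of $f_0$ over an arbitrary ball $B = B_r(x) \subset \mathbf{R}^n$ with $r < \mu$, splitting into cases according to the position of $B$ relative to $\Omega$ and $\Gamma$. First, if $B \cap \Omega = \emptyset$ then $f_0 \equiv 0$ on $B$ and the oscillation vanishes. Second, if $B \subset \Omega$, then the oscillation of $f_0 = f$ over $B$ is directly bounded by $[f]_{BMO^\mu}$ since $r < \mu$. The only substantive case is when $B$ meets both $\Omega$ and $\Omega^c$, so that $B \cap \Gamma \neq \emptyset$; pick a point $z \in \Gamma \cap B$, so that $B \subset B_{2r}(z)$ with $2r < 2\mu \leq \nu$. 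Here the key point is that we no longer compare $f_0$ to its own average, but instead estimate
\[
	\frac{1}{|B|}\int_B |f_0 - f_B|\,dy \leq \frac{2}{|B|}\int_B |f_0|\,dy = \frac{2}{|B|}\int_{B\cap\Omega}|f|\,dy \leq \frac{2}{|B|}\int_{B_{2r}(z)\cap\Omega}|f|\,dy.
\]
Since $z \in \Gamma$ and $2r < \nu$, the $b^\nu$-seminorm gives $\int_{B_{2r}(z)\cap\Omega}|f|\,dy \leq (2r)^n [f]_{b^\nu}$, and dividing by $|B| = c_n r^n$ yields the bound $C_n [f]_{b^\nu} \leq C_n [f]_{BMO^{\mu,\nu}_b}$.

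Collecting the three cases, we obtain $[f_0]_{BMO^\mu(\mathbf{R}^n)} \leq C\,[f]_{BMO^{\mu,\nu}_b(\Omega)}$ with $C$ depending only on $n$, and since $f_0 = f$ on $\Omega$ it is clearly an extension; linearity of $f \mapsto f_0$ is obvious. The only place where the hypothesis $\nu \geq 2\mu$ is used is to guarantee $B_{2r}(z)$ is an admissible ball for the $b^\nu$-seminorm when $r < \mu$, i.e.\ to close the boundary case; there is no genuine obstacle here, the argument being a routine case analysis. One minor point to be careful about is the degenerate situation where $B \cap \Gamma = \emptyset$ but $B$ still meets both $\Omega$ and $\Omega^c$ — this cannot happen since $B$ is connected, so the trichotomy above is exhaustive.
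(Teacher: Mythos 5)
Your proof is correct and follows essentially the same route as the paper's: the same three-case split, the same triangle-inequality reduction $\frac{1}{|B|}\int_B |f_0-(f_0)_B|\,dy \le \frac{2}{|B|}\int_B|f_0|\,dy$ in the boundary case, and the same enlargement $B_r(x)\subset B_{2r}(z)$ with $z\in\Gamma$ combined with $2r<2\mu\le\nu$ to invoke the $b^\nu$-seminorm. Nothing is missing.
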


\begin{proof}
If the ball $B$ of radius $\leq \mu$ is in $\Omega$, then
\[
\frac{1}{|B|} \int_B | f_0 - {f_0}_B | \, dy \leq [f]_{BMO^\mu}.
\]
If $B$ is in $\Omega^c$, then $\int_B | f_0 - {f_0}_B | \, dy = 0$. It remains to estimate the integral if $B$ has nonempty intersection with the boundary $\Gamma = \partial \Omega$. For each $B_r(x) \cap \Gamma \neq \emptyset$, $r < \mu$, we take $x_0 \in B_r(x) \cap \Gamma$. Then, $B_r(x) \subset B_{2r}(x_0)$ and thus
\[
\frac{1}{|B_r(x)|} \int_{B_r(x)} | f_0 - {f_0}_{B_r(x)} | \,dy \leq \frac{2}{|B_r(x)|} \int_{B_{2r}(x_0)} |f_0| \, dy \leq \frac{2^{n+1}}{\omega_n} \cdot [f]_{b^{2\mu}},
\]
where $\omega_n$ is the volume of an $n$-dimensional ball.
\end{proof}

\begin{remark} \label{BGST}
In \cite[Lemma 4]{BGST}, it is assumed that $\Omega = \Omega^{'} \times \mathbf{R}^{n-k}$ where $\Omega^{'}$ is a bounded Lipschitz domain in $\mathbf{R}^k$. However, from the proof above it is clear that we do not need this requirement. Thus we give a full proof here.
\end{remark}

As an application of boundedness of multiplication, we give invariance of function spaces under coordinate changes. We say that $\Psi$ is a global $C^{k+\beta}$ (resp. $C^k$)-diffeomorphism if $C^{k+\beta}$ (resp. $C^k$)-norm of $\Psi$ and $\Psi^{-1}$ are bounded in $\mathbf{R}^n$, where $k \in \mathbf{N}$ and $\beta \in (0,1)$.

\begin{proposition} \label{CC}
The space $bmo$ is invariant under bi-Lipschitz coordinate change and the space $h^1$ is invariant under global $C^{1+\beta}$-diffeomorphism.
\end{proposition}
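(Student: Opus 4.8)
The plan is to reduce everything to the already-established multiplier estimates \eqref{ME} and \eqref{MHA}, using the fact that $bmo = F^0_{\infty,2}$ and $h^1 = F^0_{1,2}$, and to argue via localization through the $L^1_{\mathrm{ul}}$ and $BMO^\infty$ parts separately. For the $bmo$ statement, let $\Psi$ be a bi-Lipschitz map of $\mathbf{R}^n$ and write $g = f \circ \Psi$ for $f \in bmo(\mathbf{R}^n)$. The Jacobian bounds $0 < c \leq |\det D\Psi| \leq C$ together with the bi-Lipschitz distortion of balls (there is $\lambda \geq 1$ with $B_{r/\lambda}(\Psi(x)) \subset \Psi(B_r(x)) \subset B_{\lambda r}(\Psi(x))$) show directly that the mean oscillation of $g$ over any ball is controlled by that of $f$ over a comparable ball: one compares $g_{B_r(x)}$ with $f_{\Psi(B_r(x))}$ using the change of variables and the Jacobian pinch, and the extra ratio $|B_r|/|\Psi(B_r)|$ is bounded. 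This gives $[g]_{BMO^\infty(\mathbf{R}^n)} \leq C [f]_{BMO^\infty(\mathbf{R}^n)}$. Similarly the uniformly local $L^1$ norm is preserved: $\int_{B_{r_0}(x)} |g|\,dy = \int_{\Psi(B_{r_0}(x))} |f| |\det D\Psi^{-1}|\,dz \leq C \int_{B_{\lambda r_0}(\Psi(x))} |f|\,dz \leq C\|f\|_{L^1_{\mathrm{ul}}}$ after covering a ball of radius $\lambda r_0$ by boundedly many balls of the fixed radius $r_0$. Hence $g \in bmo$ with $\|g\|_{bmo} \leq C\|f\|_{bmo}$, and applying the same to $\Psi^{-1}$ gives the equivalence of norms.

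For the $h^1$ statement under a global $C^{1+\beta}$-diffeomorphism $\Psi$, I would not try to estimate maximal functions directly but instead use duality together with the already available tools. Since $h^1 = F^0_{1,2}$ is the predual of $bmo = F^0_{\infty,2}$ (cited as \cite[Theorem 3.22]{Sa}), and we have just shown $bmo$ is bi-Lipschitz invariant, composition with $\Psi$ induces a bounded operator on $bmo$; one wants the adjoint statement on $h^1$, being careful that the change of variables on the pairing $\int fg\,dx$ introduces the Jacobian factor $|\det D\Psi|$, which is itself a $C^\beta$ (indeed continuous, bounded, bounded-below) function. So for $g \in h^1$ and test $f \in bmo$, one writes $\int (g\circ\Psi^{-1}) f \, |\det D\Psi^{-1}|^{-1}\cdots$ — more cleanly, $\langle g \circ \Psi, f\rangle = \langle g, (f\circ\Psi^{-1})\,|\det D\Psi^{-1}|\rangle$; the right factor is a $C^\beta$ multiplier times an element of $bmo$, hence in $bmo$ with controlled norm by \eqref{ME}, so $\langle g, \cdot\rangle$ defines a bounded functional, i.e. $g\circ\Psi \in h^1$. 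Alternatively, and perhaps more transparently, one invokes the atomic characterization of $h^1$: the image under $\Psi$ of a local $h^1$-atom supported in a ball of radius $\leq 1$ is, after multiplying by $|\det D\Psi|$ to fix normalization and using the $C^\beta$ regularity to absorb the cancellation defect, a fixed multiple of a local atom (atoms on small balls need a cancellation condition, which the $C^{1+\beta}$ smoothness handles up to an $h^1$-bounded remainder); atoms on balls of radius $\gtrsim 1$ need no cancellation and are handled by the Jacobian bounds alone.

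The main obstacle is the $h^1$ case: unlike $bmo$, membership in $h^1$ is not characterized by a pointwise ball condition, so the naive change-of-variables argument does not close, and one genuinely needs either the duality route (watching the Jacobian weight carefully and checking it is an admissible $C^\beta$ multiplier with the required lower bound, which is why $C^{1+\beta}$ rather than merely Lipschitz is assumed — the Jacobian must be H\"older to serve as a pointwise multiplier on $h^1$ via \eqref{MHA}) or the atomic route (where the delicate point is that $C^1$ alone does not preserve the moment condition of small atoms, but the extra $\beta$-H\"older control on $D\Psi$ lets one write the transported atom as an atom plus an $h^1$-controlled correction). I would present the duality argument as the primary proof since \eqref{ME} and \eqref{MHA} are already in hand, and merely remark that $C^{1+\beta}$ cannot be weakened to bi-Lipschitz for $h^1$, the Jacobian needing to be a genuine H\"older multiplier.
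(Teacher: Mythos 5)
Your proposal takes essentially the same route as the paper: the $bmo$ part by direct change of variables on ball averages (the paper uses the $\inf_c$ form of the oscillation, which is equivalent), and the $h^1$ part by the same $h^1$--$bmo$ duality computation in which the $C^\beta$ Jacobian is absorbed as a pointwise multiplier --- the only cosmetic difference being that you attach the Jacobian to the $bmo$ factor via \eqref{ME}, while the paper attaches it to the $h^1$ factor via \eqref{MHA}, two estimates that are dual to each other. The argument is correct, and your implicit use of the duality supremum to recover $\| g\circ\Psi \|_{h^1}$ is exactly the step the paper itself takes.
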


\begin{proof} 
For $f \in bmo$, by a simple change of variables on the equivalent definition of the seminorm $[f]_{BMO}$ where
\[
[f]_{BMO} = \underset{B \subset \mathbf{R}^n}{\mathrm{sup}} \; \underset{c \in \mathbf{R}}{\mathrm{inf}} \, \int_B |f(y)-c| \, dy,
\]
see e.g. \cite[Proposition 3.1.2]{Gra}, we can easily deduce that $bmo$ is invariant under bi-Lipschitz coordinate change.

Let $g \in h^1(\mathbf{R}^n)$ and $\Psi$ be a global $C^{1+\beta}$-diffeomorphism. We have that
\[
\| g \circ \Psi \|_{h^1} = \underset{ \| f \|_{bmo} \leq 1}{\mathrm{sup}} \, \left| \int_{\mathbf{R}^n} f \cdot g \circ \Psi \, dy \right|.
\]
By change of variable we have that
\[
\left| \int_{\mathbf{R}^n} f(y) \cdot g \circ \Psi (y) \, dy \right| = \left| \int_{\mathbf{R}^n} f \circ \Psi^{-1} (x) \cdot g(x) \cdot J_{\Psi^{-1}}(x) \, dx \right|\tcr{,}
\]
where $J_{\Psi^{-1}}$ is the Jacobian which is of regularity $C^{\beta}$. Then by the $bmo-h^1$ duality \cite[Theorem 3.22]{Sa} and multiplication estimate (\ref{MHA}), we deduce that
\[
\left| \int_{\mathbf{R}^n} f \circ \Psi^{-1} \cdot g \cdot J_{\Psi^{-1}} \, dx \right| \leq \| f \circ \Psi^{-1} \|_{bmo} \cdot \| g J_{\Psi^{-1}} \|_{h^1} \leq \| f \circ \Psi^{-1} \|_{bmo} \cdot \| J_{\Psi^{-1}} \|_{C^\beta} \cdot \| g \|_{h^1}.
\]
Since $bmo$ is independent of bi-Lipschitz coordinate change, we have that
\[
\| g \circ \Psi \|_{h^1} \leq C \cdot \| \nabla \Psi^{-1} \|_{L^\infty} \cdot \| J_{\Psi^{-1}} \|_{C^\beta} \cdot \| g \|_{h^1}
\]
for some constant $C$ independent of $g$ and $\Psi$.
\end{proof}

\begin{proposition} \label{CCL}
The space $F_{1,2}^1(\mathbf{R}^n)$ is independent of global $C^{1+\beta}$-diffeomorphism.
\end{proposition}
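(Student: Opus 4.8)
The plan is to bootstrap from Proposition~\ref{CC} and the multiplier estimate \eqref{MHA}, using the derivative (lift) characterization of $F^1_{1,2}$. First I would record that $g \in F^1_{1,2}(\mathbf{R}^n)$ if and only if $g$ together with each first-order derivative $\partial_j g$ belongs to $h^1 = F^0_{1,2}(\mathbf{R}^n)$, and that
\[
	\|g\|_{F^1_{1,2}} \sim \|g\|_{h^1} + \sum_{j=1}^n \|\partial_j g\|_{h^1};
\]
this is the standard lift property of Triebel--Lizorkin spaces (see e.g.\ \cite{Tr92}). Since the inverse of a global $C^{1+\beta}$-diffeomorphism is again one, it suffices to prove: if $\Psi$ is a global $C^{1+\beta}$-diffeomorphism and $g \in F^1_{1,2}$, then $g\circ\Psi \in F^1_{1,2}$ with $\|g\circ\Psi\|_{F^1_{1,2}} \le C\|g\|_{F^1_{1,2}}$, the constant depending only on $n$, $\beta$ and the $C^{1+\beta}$-norms of $\Psi$ and $\Psi^{-1}$; applying this also to $\Psi^{-1}$ gives the reverse inclusion and hence the asserted invariance.

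For the zeroth-order term this is exactly Proposition~\ref{CC}: $\|g\circ\Psi\|_{h^1} \le C\|g\|_{h^1}$. For the first-order term I would use the chain rule
\[
	\partial_j (g\circ\Psi)(y) = \sum_{i=1}^n \bigl((\partial_i g)\circ\Psi\bigr)(y)\,\partial_j\Psi^i(y), \qquad \Psi = (\Psi^1,\dots,\Psi^n).
\]
Here $(\partial_i g)\circ\Psi \in h^1$ by Proposition~\ref{CC} applied to $\partial_i g \in h^1$, while $\partial_j\Psi^i \in C^\beta(\mathbf{R}^n)$ with $\|\partial_j\Psi^i\|_{C^\beta}$ controlled by $\|\Psi\|_{C^{1+\beta}}$ since $\Psi$ is a global $C^{1+\beta}$-diffeomorphism. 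Hence the multiplier estimate \eqref{MHA} (with $\gamma=\beta$, recalling $h^1=F^0_{1,2}$) together with the quantitative bound in Proposition~\ref{CC} gives
\[
	\|\partial_j(g\circ\Psi)\|_{h^1} \le C\sum_{i=1}^n \|\partial_j\Psi^i\|_{C^\beta}\,\|(\partial_i g)\circ\Psi\|_{h^1} \le C'\sum_{i=1}^n \|\partial_i g\|_{h^1}.
\]
Summing over $j$ and invoking the norm equivalence above yields $\|g\circ\Psi\|_{F^1_{1,2}} \le C\|g\|_{F^1_{1,2}}$.

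The one point that needs care is that $g$ has only $F^1_{1,2}$ regularity, so $\partial_i g$ is a priori just an element of $h^1$, and the chain rule and the products $\bigl((\partial_i g)\circ\Psi\bigr)\partial_j\Psi^i$ must be justified at that level. I would do this by approximating $g$ by Schwartz functions $g_k$ with $g_k \to g$ in $F^1_{1,2}$ (legitimate since $p=1$ and $q=2$ are finite, so Schwartz functions are dense), applying all of the above to each $g_k$ where it is elementary, and passing to the limit; continuity of $h \mapsto h\circ\Psi$ on $h^1$ (Proposition~\ref{CC}) and of multiplication by a fixed $C^\beta$ function on $h^1$ (estimate \eqref{MHA}) make the limit pass through. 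Everything else — in particular that $\Psi^{-1}$ is again a global $C^{1+\beta}$-diffeomorphism — is routine.
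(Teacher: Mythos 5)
Your proposal is correct and essentially reproduces the paper's argument: both reduce the problem to bounding $\nabla(g\circ\Psi)$ in $h^1=F^0_{1,2}$ via the chain rule, Proposition \ref{CC} and the multiplier estimate \eqref{MHA}, and then upgrade this to membership in $F^1_{1,2}$. The only difference is in the last step and is minor: you quote the standard lift characterization $F^1_{1,2}=\{f\in F^0_{1,2}\mid \nabla f\in (F^0_{1,2})^n\}$ (and justify the chain rule by density of Schwartz functions), whereas the paper derives that same equivalence on the spot through the isomorphism $(I-\Delta)^{-1}\colon F^{-1}_{1,2}\to F^{1}_{1,2}$ and the boundedness of differentiation from $F^{s}_{1,2}$ to $F^{s-1}_{1,2}$, as recorded in Remark \ref{CCLR}.
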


\begin{proof}
Let $g \in F_{1,2}^1$ and $\Psi$ be a global $C^{1+\beta}$-diffeomorphism. By multiplication estimate (\ref{MHA}) and Proposition \ref{CC}, we have that
\[
\| \nabla (g \circ \Psi) \|_{F_{1,2}^0} \leq C \cdot \| \nabla \Psi \|_{C^\beta} \cdot \| (\nabla g) \circ \Psi \|_{F_{1,2}^0} \leq C \cdot \| \nabla \Psi \|_{C^\beta} \cdot \| \nabla \Psi^{-1} \|_{L^\infty} \cdot \| J_{\Psi^{-1}} \|_{C^\beta} \cdot \| g \|_{F_{1,2}^1}\tcr{,}
\]
where $J_{\Psi^{-1}}$ is the Jacobian for $\Psi^{-1}$ and $C$ is a constant independent of $g$ and $\Psi$. Hence $\nabla (g \circ \Psi) \in F_{1,2}^0$. Since the differentiation mapping is bounded from $F_{p,q}^s$ to $F_{p,q}^{s-1}$ for $p \in (0,\infty)$, $q \in (0,\infty]$ and $s \in \mathbf{R}$, see e.g. \cite[Theorem 2.12]{Sa}, we have that $\Delta (g \circ \Psi) \in F_{1,2}^{-1}$. Since $F_{1,2}^1 \hookrightarrow F_{1,2}^0$, Proposition \ref{CC} tells us that $g \circ \Psi \in F_{1,2}^0$ and thus $g \circ \Psi \in F_{1,2}^{-1}$. Therefore, $(I - \Delta) (g \circ \Psi) \in F_{1,2}^{-1}$. Notice that \cite[Theorem 2.12]{Sa} also tells us that for $\sigma \in \mathbf{R}$, $(I - \Delta)^{\sigma}$ is an isomorphism from $F_{p,q}^s$ to $F_{p,q}^{s-2\sigma}$. Hence by letting $\sigma = -1$, we deduce that 
\begin{align*}
\| g \circ \Psi \|_{F_{1,2}^1} &= \| (I - \Delta)^{-1} (I - \Delta) (g \circ \Psi) \|_{F_{1,2}^1} \\
&= C \cdot \| (I - \Delta) (g \circ \Psi) \|_{F_{1,2}^{-1}} \\
&\leq C \cdot \left( \| g \circ \Psi \|_{F_{1,2}^0} + \| \nabla (g \circ \Psi) \|_{F_{1,2}^0} \right)\\
&\leq C \cdot (1+ \| \nabla \Psi \|_{C^\beta}) \cdot \| \nabla \Psi^{-1} \|_{L^\infty} \cdot \| J_{\Psi^{-1}} \|_{C^{\beta}} \cdot \| g \|_{F_{1,2}^1},
\end{align*}
where $C$ is a constant independent of $g$ and $\Psi$.
\end{proof}

\begin{remark} \label{CCLR}
The proof of Proposition \ref{CCL} also says that $F_{1,2}^1 = \{ f \in F_{1,2}^0 \mid \nabla f \in (F_{1,2}^0)^n \}$.
\end{remark}
%

\section{Trace problems} \label{TR} 

In this section we show that the normal trace of a vector field in $vbmo^{\mu, \nu}_\delta$ is in $L^\infty(\Gamma)$ if its divergence is well controlled.
 We begin with the case that $\Omega$ is the half space $\mathbf{R}^n_+$.

We first recall that the trace operator $(Tr f) (x') = f(x',0)$ for $f \in F^1_{1, 2}(\mathbf{R}^n)$ gives a surjective bounded linear operator from $F^1_{1, 2}(\mathbf{R}^n)$ to $L^1(\mathbf{R}^{n-1})$; see \cite[Section 4.4.3]{Tr92}.
\begin{proposition}[\cite{Tr92}] \label{HA}
The operator $Tr$ from $F^1_{1, 2}$ to $L^1(\mathbf{R}^{n-1})$ is surjective for $n \geq 2$.
 Actually, surjectivity holds for a smaller space $B^1_{1, 1}$.
 The inverse operator is called the extension and it is a bounded operator.
\end{proposition}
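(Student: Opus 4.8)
The plan is to split the assertion into two parts: (a) the map $Tr$ is bounded from $F^1_{1,2}(\mathbf R^n)$ into $L^1(\mathbf R^{n-1})$, and (b) there is an extension $\mathrm{Ext}\colon L^1(\mathbf R^{n-1})\to F^1_{1,2}(\mathbf R^n)$ with $Tr\circ\mathrm{Ext}=\mathrm{id}$ and $\|\mathrm{Ext}\,\psi\|_{F^1_{1,2}}\le C\|\psi\|_{L^1(\mathbf R^{n-1})}$; part (b) gives at once both surjectivity and the bounded right inverse. For the remark on the smaller space I would use the elementary embedding $B^1_{1,1}(\mathbf R^n)\hookrightarrow F^1_{1,2}(\mathbf R^n)$ (valid since $\min(1,2)=1$) and arrange $\mathrm{Ext}$ to take values in $B^1_{1,1}$.

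For (a) I would use the identification $F^1_{1,2}=\{f\in F^0_{1,2}\mid\nabla f\in(F^0_{1,2})^n\}$ of Remark \ref{CCLR} together with $F^0_{1,2}=h^1\hookrightarrow L^1$, which yields $F^1_{1,2}(\mathbf R^n)\hookrightarrow W^{1,1}(\mathbf R^n)$. For $\varphi\in F^1_{1,2}$ the slices $\varphi(x',\cdot)$ then lie in $W^{1,1}(\mathbf R)$ for a.e.\ $x'$, hence have an absolutely continuous representative vanishing at $\pm\infty$, so that $Tr\,\varphi(x')=-\int_0^\infty(\partial_n\varphi)(x',t)\,dt$ for a.e.\ $x'$ and $\|Tr\,\varphi\|_{L^1(\mathbf R^{n-1})}\le\|\partial_n\varphi\|_{L^1(\mathbf R^n_+)}\le C\|\varphi\|_{F^1_{1,2}}$; this is the easy direction.

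Part (b) is the substantial one. The naive Littlewood--Paley extension $\psi\mapsto\sum_{j\ge0}(\Delta^{(n-1)}_j\psi)(x')\,\theta(2^jx_n)$ with $\theta\in\mathcal S(\mathbf R)$, $\theta(0)=1$ and $\widehat\theta$ supported near $0$ does give $Tr=\mathrm{id}$, but each summand is localized at frequency $\sim 2^j$, so its $B^1_{1,1}$-norm reduces to $\sum_j\|\Delta^{(n-1)}_j\psi\|_{L^1}$, i.e.\ the $B^0_{1,1}$-norm, which is not controlled by $\|\psi\|_{L^1}$; one therefore needs a construction that genuinely sees only the $L^1$-mass of $\psi$. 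Here I would follow the Gagliardo-type co-area idea: reduce to $\psi\ge0$, use the layer-cake formula $\psi=\int_0^\infty\chi_{\{\psi>\lambda\}}\,d\lambda$, extend each superlevel indicator $\chi_{E_\lambda}$ to $\mathbf R^n_+$ by a (mollified, truncated) profile whose normal derivative carries $L^1$-mass comparable to $|E_\lambda|$ — the admissible truncation height being dictated by $|E_\lambda|/\mathrm{Per}(E_\lambda)$ via the isoperimetric inequality — and set $\mathrm{Ext}\,\psi=\int_0^\infty(\text{extension of }\chi_{E_\lambda})\,d\lambda$, so that $\int_0^\infty|E_\lambda|\,d\lambda=\|\psi\|_{L^1}$ yields the bound. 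I expect the main obstacle to be that this raw construction only lands in $W^{1,1}$, whereas $F^1_{1,2}$ (equivalently, gradient in the local Hardy space $h^1$, or membership in $B^1_{1,1}$) is strictly smaller: one must mollify $\chi_{E_\lambda}$ at a scale matched to the truncation height, estimate the frequency-localized pieces of the resulting smooth function, and absorb the mollification error in the trace by iterating the construction. Verifying this refinement — in particular the $\ell^2$ square-function bound defining the $F^1_{1,2}$-norm of the smoothed extension — is the delicate step, and is essentially what \cite[Section 4.4.3]{Tr92} carries out. A non-constructive alternative for the surjectivity alone is to dualize: $Tr$ is onto iff the adjoint $g\mapsto g\otimes\delta_{\{x_n=0\}}$ from $L^\infty(\mathbf R^{n-1})$ into $(F^1_{1,2})^\ast=F^{-1}_{\infty,2}(\mathbf R^n)$ is bounded below, which reduces to the point-mass estimate $\|g\otimes\delta_{\{x_n=0\}}\|_{F^{-1}_{\infty,2}}\gtrsim\|g\|_{L^\infty}$, obtained by evaluating the Littlewood--Paley pieces of $g\otimes\delta_{\{x_n=0\}}$ near a Lebesgue point of $g$.
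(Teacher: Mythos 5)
Your overall framing is consistent with what the paper actually does: the paper offers no proof of Proposition \ref{HA} at all, but simply recalls it from \cite[Section 4.4.3]{Tr92}, and your proposal ends up in the same place — part (a) (boundedness of $Tr$, via $F^1_{1,2}=\{f\in F^0_{1,2}\mid\nabla f\in (F^0_{1,2})^n\}\hookrightarrow W^{1,1}$ and slicing) is fine and elementary, your diagnosis that the naive Littlewood--Paley extension only yields a $B^0_{1,1}$-bound is correct, and for the substantive part (b) you explicitly defer the delicate square-function verification to the very same section of Triebel. At that level the proposal and the paper coincide. One small remark: since $F^1_{1,2}\hookrightarrow W^{1,1}$ and, by Peetre, no bounded \emph{linear} extension $L^1(\mathbf{R}^{n-1})\to W^{1,1}(\mathbf{R}^n)$ exists, the extension in Proposition \ref{HA} is necessarily non-linear; your wording (and the paper's use of it in Theorem \ref{NTH}) only needs the norm bound and $Tr\circ\mathrm{Ext}=\mathrm{id}$, so this is harmless, but it should not be described as a linear right inverse.

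The two sketches you add in part (b), however, would not stand as proofs if the citation were removed. For the layer-cake construction: a general $\psi\in L^1(\mathbf{R}^{n-1})$ is not of bounded variation, so for a.e.\ $\lambda$ the superlevel set $E_\lambda$ has infinite perimeter, and your rule ``truncation height $\sim|E_\lambda|/\mathrm{Per}(E_\lambda)$'' forces height zero; the construction degenerates exactly in the generic case. To repair it one must first approximate $\psi$ in $L^1$ by nice functions and extend the telescoping differences on slabs of geometrically shrinking thickness — which is precisely the Gagliardo/Triebel scheme, not a shortcut — and even then the passage from a $W^{1,1}$-bound to an $h^1$-gradient (i.e.\ $F^1_{1,2}$ or $B^1_{1,1}$) bound is the whole point, which you concede is taken from \cite{Tr92}. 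The duality aside is also not a genuine alternative: the lower bound $\|g\otimes\delta_{\{x_n=0\}}\|_{F^{-1}_{\infty,2}}\gtrsim\|g\|_{L^\infty}$ is, by duality, equivalent to producing for every small $r$ an $F^1_{1,2}$-function of norm $O(1)$ whose trace is an $L^1$-normalized bump at scale $r$ — that is, to the extension problem itself for concentrated data — and it is not routine: the naive product extension $\phi_r(x')\theta(x_n/r)$ has gradient with non-vanishing mean at scale $r$, hence $h^1$-norm of order $\log(1/r)$, so ``evaluating the Littlewood--Paley pieces near a Lebesgue point'' does not deliver the estimate. In short: correct reduction, correct identification of the difficulty, but the independent arguments offered for the crucial extension step contain genuine gaps, and the statement ultimately rests on \cite{Tr92} exactly as in the paper.
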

For a $C^2$ domain $\Omega$ a normal trace $v\cdot\mathbf{n}$ on $\Gamma=\partial\Omega$ of $v$ is well-defined as an element of $W^{-1/p}_{p, \mathrm{loc}}(\Gamma)$ if $v$ and $\operatorname{div}v$ is in $L^p_\mathrm{loc}$; see e.g.\ \cite{FM} or \cite{Gal}.
 If $v \in vbmo^{\mu, \nu}_\delta(\Omega)$ so that $v \in L^1_\mathrm{loc}$, then by an interpolation inequality (see e.g.\ \cite[Theorem 11]{BGST}) $v$ is in $L^p_\mathrm{loc}$ for any $p \geq 1$.
 Thus if $\operatorname{div}v$ is in $L^p_\mathrm{loc}$, $v\cdot\mathbf{n}$ is well-defined.
 We derive $L^\infty$ estimate for $v\cdot\mathbf{n}$ when $\Omega$ is the half space.
\begin{theorem} \label{NTH}
Let $\mu,\nu,\delta$ be in $(0,\infty]$ and $n \geq 2$.
 Then there is a constant $C=C(\mu,\nu,\delta,n)$ such that
\[
	\| v\cdot\mathbf{n} \|_{L^\infty(\mathbf{R}^{n-1})} 
	\leq C \left( \| v \|_{vbmo^{\mu,\nu}_\delta(\mathbf{R}^n_+)} + \|\operatorname{div}v \|_{L^n_\mathrm{ul}(\Gamma_\delta)} \right)
\]
for all $v \in vbmo^{\mu,\nu}_\delta(\mathbf{R}^n_+)$.
\end{theorem}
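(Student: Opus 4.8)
The plan is to follow the duality strategy sketched in the introduction. For $\psi \in L^1(\mathbf{R}^{n-1})$, by Proposition \ref{HA} choose $\varphi \in F^1_{1,2}(\mathbf{R}^n)$ with $Tr\,\varphi = \psi$ and $\|\varphi\|_{F^1_{1,2}} \leq C\|\psi\|_{L^1}$; since the extension operator can be chosen to produce a function even in $x_n$ (reflect the given extension and average), we may assume $\varphi(x',x_n)=\varphi(x',-x_n)$. Extend the vector field $v=(v',v_n)$ to all of $\mathbf{R}^n$ by taking the tangential part $v'$ even in $x_n$ and the normal part $v_n = \nabla d_\Omega\cdot v$ (here $\nabla d_\Omega = -e_n$) odd in $x_n$. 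The first key point is that this extended $v$ lies in $bmo(\mathbf{R}^n)$ with norm controlled by $\|v\|_{vbmo^{\mu,\nu}_\delta}$: the even reflection of a $BMO^\mu$ function is in $BMO^\mu$, and the crucial gain is that the \emph{odd} reflection of $v_n$ is also in $BMO^\mu$ precisely because $[v_n]_{b^\nu}<\infty$ controls the jump across $\{x_n=0\}$; the $L^1_{\mathrm{ul}}$ part of the $bmo$ norm comes from $[v]_{\Gamma_\delta}$ together with the $b^\nu$ bound on $v_n$. Similarly $\operatorname{div}v$ extended appropriately (even, since $\partial_n(\text{odd})$ is even and $\partial_i(\text{even})$ is even) lies in $L^n_{\mathrm{ul}}(\mathbf{R}^n)$ with the stated bound. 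One should be careful that the reflected objects agree in the distributional sense so that $\operatorname{div}$ of the extension equals the extension of $\operatorname{div}v$ with no singular surface term — this is where the odd/even choice is forced.

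The second step is the integration-by-parts identity. Starting from
\[
	\int_{\mathbf{R}^{n-1}} (v\cdot\mathbf{n})\,\psi\,dx'
	= \int_{\mathbf{R}^n_+} (\operatorname{div}v)\,\varphi\,dx - \int_{\mathbf{R}^n_+} v\cdot\nabla\varphi\,dx,
\]
valid for smooth compactly supported $\varphi$ and then extended by density, the symmetry of the extensions converts the half-space integrals into one-half of the full-space integrals, giving
\[
	\int_{\mathbf{R}^{n-1}} (v\cdot\mathbf{n})\,\psi\,dx'
	= \frac{1}{2}\int_{\mathbf{R}^n}(\operatorname{div}v)\,\varphi\,dx - \frac{1}{2}\int_{\mathbf{R}^n} v\cdot\nabla\varphi\,dx,
\]
with $v$, $\operatorname{div}v$, $\varphi$ now denoting the extended objects. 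One must justify that the approximation argument is legitimate for $v$ merely in $bmo\cap L^n_{\mathrm{ul}}$-divergence class, e.g.\ by mollifying $v$ and passing to the limit using $v\in L^p_{\mathrm{loc}}$ for all $p$ (interpolation, as noted before the theorem) and $\varphi\in F^1_{1,2}$.

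The third step is to estimate the two terms on the right. For the divergence term, $\|\varphi\|_{L^\infty}$ is not controlled, so instead use that $\varphi \in F^1_{1,2}$ embeds into $L^{n'}$ (Sobolev embedding, $1/n' = 1 - 1/n$ in $n$ dimensions, or rather into a suitable $L^q$) and pair with $\operatorname{div}v\in L^n_{\mathrm{ul}}$; more carefully, one uses $h^1$–$bmo$ type control or the fact that $F^1_{1,2}\hookrightarrow L^q$ for the appropriate $q$ together with the uniformly-local $L^n$ bound, cutting $\varphi$ with the natural scale. For the main term, write $\int v\cdot\nabla\varphi\,dx$ and invoke the $h^1$–$bmo$ duality \cite[Theorem 3.22]{Sa}: since $\nabla\varphi \in F^0_{1,2}=h^1$ with $\|\nabla\varphi\|_{h^1}\leq\|\varphi\|_{F^1_{1,2}}$ and $v\in bmo = F^0_{\infty,2}$,
\[
	\left|\int_{\mathbf{R}^n} v\cdot\nabla\varphi\,dx\right| \leq C\|v\|_{bmo}\|\varphi\|_{F^1_{1,2}} \leq C\|v\|_{vbmo^{\mu,\nu}_\delta}\|\psi\|_{L^1}.
\]
Combining, $\left|\int (v\cdot\mathbf{n})\psi\,dx'\right| \leq C\bigl(\|v\|_{vbmo^{\mu,\nu}_\delta} + \|\operatorname{div}v\|_{L^n_{\mathrm{ul}}(\Gamma_\delta)}\bigr)\|\psi\|_{L^1}$, and taking the supremum over $\|\psi\|_{L^1}\leq 1$ gives the $L^\infty$ bound by $L^\infty$–$L^1$ duality on $\Gamma=\mathbf{R}^{n-1}$.

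\textbf{Main obstacle.} I expect the hard part to be Step 1: showing rigorously that the odd extension of $v_n$ across $\{x_n=0\}$ stays in $BMO^\mu(\mathbf{R}^n)$ with norm bounded by $[v]_{BMO^\mu}+[v_n]_{b^\nu}+[v]_{\Gamma_\delta}$. The even-extension case is classical, but for the odd extension a ball straddling the hyperplane sees the jump $v_n(x',0^+)-(-v_n(x',0^-))=2v_n$ near the boundary, and one must show this is absorbed by the $b^\nu$ seminorm — splitting balls into those well inside a half space (handled by $[v]_{BMO^\mu}$), those of radius $\lesssim \nu$ centered near the hyperplane (handled by averaging against $[v_n]_{b^\nu}$, since on such balls the mean of $|v_n|$ is $O([v_n]_{b^\nu})$), and those of large radius (handled by $L^1_{\mathrm{ul}}$, using $\nu,\delta$ finite or the global hypotheses when infinite). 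A secondary technical point is the correct Sobolev/interpolation exponent making the $\operatorname{div}v$ term close with $L^n_{\mathrm{ul}}$ rather than $L^\infty$; the choice of $n$ as the integrability exponent is presumably dictated exactly by the embedding $F^1_{1,2}(\mathbf{R}^n)\hookrightarrow L^{n/(n-1)}$ being dual to $L^n$.
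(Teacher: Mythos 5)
Your skeleton (trace extension into $F^1_{1,2}$, evenness of the test function, reflection of $v$ with the $b^\nu$ seminorm absorbing the interface, $h^1$--$bmo$ duality, Sobolev pairing for the divergence term) is exactly the heuristic the paper sketches in the introduction, but as written it only proves the case $\mu=\nu=\delta=\infty$ (essentially estimate \eqref{MTH}), and there is a genuine gap for finite exponents, which the theorem covers and which is the strictly stronger statement since smaller exponents give weaker norms on the unbounded domain $\mathbf{R}^n_+$. Your Step 1 claim, that the globally reflected field lies in $bmo(\mathbf{R}^n)=BMO^\infty(\mathbf{R}^n)\cap L^1_{\mathrm{ul}}(\mathbf{R}^n)$ with norm controlled by $\|v\|_{vbmo^{\mu,\nu}_\delta}$, is false: for finite $\mu,\delta$ this norm gives no control of mean oscillation on balls of radius $\geq\mu$ and no control of the uniformly local $L^1$ norm outside the strip $\Gamma_\delta$. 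For instance $v=(x_n,0,\ldots,0)$ is divergence free, has vanishing normal component, and belongs to $vbmo^{\mu,\nu}_\delta(\mathbf{R}^n_+)$ for finite $\mu,\delta$, yet its even reflection $|x_n|$ lies in neither $BMO^\infty(\mathbf{R}^n)$ nor $L^1_{\mathrm{ul}}(\mathbf{R}^n)$, so the duality estimate $\left|\int_{\mathbf{R}^n} v\cdot\nabla\varphi\,dx\right|\leq C\|v\|_{bmo}\|\varphi\|_{F^1_{1,2}}$ has nothing to pair against. The same defect hits your divergence term: the hypothesis only controls $\operatorname{div}v$ in $L^n_{\mathrm{ul}}(\Gamma_\delta)$, i.e.\ in the boundary strip of width $\delta$, whereas your integral $\int_{\mathbf{R}^n_+}(\operatorname{div}v)\varphi\,dx$ extends over the whole half space with $\varphi$ not supported near the boundary, so it cannot be bounded by $\|\operatorname{div}v\|_{L^n_{\mathrm{ul}}(\Gamma_\delta)}$.

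The missing idea is localization, which is how the paper's proof actually runs: fix $x_0'\in\mathbf{R}^{n-1}$, work in $U=B_1(x_0')\times(-\delta,\delta)$, take $\varphi\in C^\infty_c\left(B_{1/2}(x_0')\right)$ and $\rho=\theta\,Ext\,\varphi$ with a cutoff $\theta\in C^\infty_c(U)$ (the multiplier theorem keeps $\rho$ in $F^1_{1,2}$), so that every integral lives in $U$, whose intersection with $\mathbf{R}^n_+$ lies inside $\Gamma_\delta$. On this bounded set the finite-exponent seminorms control the full $bmo^\infty_\infty$ norm; the normal component is extended by zero across $\{x_n=0\}$ using Theorem \ref{ZE} (this is where $b^\nu$ enters; the paper uses the zero rather than the odd extension, a minor difference from your reflection, and it also removes the factor $1/2$ for that term), the tangential part is evenly extended, and then Jones' extension (Theorem \ref{PJM}) upgrades the locally defined field to a genuine $bmo(\mathbf{R}^n)$ function $v_U$, against which the $h^1$--$bmo$ duality and the $L^n$--$L^{n/(n-1)}$ Sobolev pairing on $U$ are legitimately applied; the constant is uniform in $x_0'$ and density of $C^\infty_c$ in $L^1$ finishes. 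Your secondary remarks (role of $b^\nu$ at the interface, the exponent $n$ being dictated by $F^1_{1,2}\hookrightarrow L^{n/(n-1)}$) are correct in spirit, but without the localization-plus-Jones step the proof does not establish Theorem \ref{NTH} as stated.
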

\begin{proof}
Let $v \in vbmo^{\mu,\nu}_\delta(\mathbf{R}_{+}^{n})$, by definition the $n$-th component $v_n$ of $v=(v',v_n)$ belongs to $BMO^{\mu,\nu}_b(\mathbf{R}^n_+)$. For $x_0^{'} \in \mathbf{R}^{n-1}$, we consider the region $U = B_1(x_0^{'}) \times (-\delta,\delta)$ where $B_1(x_0^{'})$ denotes the ball in $\mathbf{R}^{n-1}$ centered at $x_0^{'}$ with radius $1$. Let $v_{\mathrm{re}}$ denotes the restriction of $v$ on $U \cap \mathbf{R}_{+}^{n}$, i.e. $v_{\mathrm{re}} = v \, |_{U \cap \mathbf{R}_{+}^{n}}$. We have that $v_{\mathrm{re}} \in bmo_\infty^\infty(U \cap \mathbf{R}_{+}^{n})$ and
\[
\underset{r < \nu}{\underset{x^{'} \in B_1(x_0^{'})}{\mathrm{sup}}} \, \frac{1}{|B_r((x^{'},0))|} \int_{B_r((x^{'},0))} |(v_{\mathrm{re}})_n | \, dy < \infty.
\]
Let $\overline{(v_{\mathrm{re}})_n}$ be the zero extension of $(v_{\mathrm{re}})_n$ to $U$. By Theorem \ref{ZE}, $\overline{(v_{re})_n}$ is in $BMO^\infty(U)$. Let $\overline{v_{\mathrm{re}}^{'}}$ be the even extension of $v_{\mathrm{re}}^{'}$ to $U$ of the form
\begin{eqnarray}
	\overline{v_{\mathrm{re}}^{'}}(x', x_n) = \left \{
\begin{array}{ll}
	v_{\mathrm{re}}'(x', x_n), & x^{'} \in B_1(x_0^{'}) \text{ and } x_n > 0 \\
	v_{\mathrm{re}}'(x', -x_n), & x^{'} \in B_1(x_0^{'}) \text{ and } x_n < 0
\end{array}
\right.
\end{eqnarray}
and set $\widetilde{v} = (\overline{v_{\mathrm{re}}^{'}},\overline{(v_{\mathrm{re}})_n})$. We have that $\widetilde{v} \in bmo_\infty^\infty(U)$. By Theorem \ref{PJM} its Jones' extension $v_U$ belongs to $bmo^\infty_\infty(\mathbf{R}^n)$.

Integration by parts formally yields
\begin{equation} \label{INT}
	\int_{\mathbf{R}^{n-1}} v_U \cdot \mathbf{n} \rho\, dx'
	= \int_{\mathbf{R}^n_+} (\operatorname{div}v_U) \rho\, dx
	- \int_{\mathbf{R}^n_+} v_U \cdot \nabla\rho\, dx.
\end{equation}
By Proposition \ref{HA} there is an extension operator $Ext: L^1(\mathbf{R}^{n-1}) \to F^1_{1,2}(\mathbf{R}^n)$ such that $Tr \circ Ext$ is the identity operator on $L^1$.
 For $\varphi \in C^\infty_c \left(B_\frac{1}{2}(x'_0)\right)$ we set $\sigma=Ext\,\varphi$.
 By multiplying a cut off function $\theta \in C^\infty_c(U)$ such that $\theta \equiv 1$ in $\frac{1}{2}U$ and consider $\rho=\theta\sigma$ we still find $\rho \in F^1_{1,2}(\mathbf{R}^n)$ by a multiplier theorem \cite[Theorem 3.18]{Sa}, \cite[Section 4.2.2]{Tr92}.
 We estimate \eqref{INT} to get
\begin{align*}
	\left| \int_{\mathbf{R}^{n-1}} v_U \cdot \mathbf{n} \rho\, dx' \right|
	&\leq \left| \int_U (\operatorname{div}v_U) \rho\, dx \right| 
	+ \left| \int_{\mathbf{R}^n_+} v'_U \cdot \nabla' \rho\, dx \right|\\
	&+ \left| \int_{\mathbf{R}^n} v_{U^n} \frac{\partial\rho}{\partial x_n} dx \right| = I + I\hspace{-0.3em}I + I\hspace{-0.3em}I\hspace{-0.3em}I.
\end{align*}
We may assume that $\rho$ is even in $x_n$ by taking $\left(\rho(x', x_n)+\rho(x', -x_n)\right)/2$ so that the second term is estimated by $bmo$-$h^1$ duality $(h^1)^* = (F^0_{1,2})^* = F^0_{\infty,2} = bmo$ as follows
\begin{align*}
	I\hspace{-0.3em}I 
	&= \left| \int_{\mathbf{R}^n_+} v'_U \cdot \nabla' \rho\, dx \right|
	= \frac{1}{2} \left| \int_{\mathbf{R}^n} v'_U \cdot \nabla' \rho\, dx \right| \\
	&\leq C \| v'_U \|_{bmo} \| \nabla' \rho \|_{h^1}.
\end{align*}
The third term is estimated as
\[
I\hspace{-0.3em}I\hspace{-0.3em}I \leq C \| v_{U^n} \|_{bmo} \left\| \frac{\partial\rho}{\partial x_n} \right\|_{h^1}.
\]
The first term is estimated by
\begin{align*}
		I &\leq \| \operatorname{div}v_U \|_{L^n(U)} \| \rho \|_{L^{n/(n-1)}(U)} \\
		&\leq C \| \operatorname{div}v \|_{L^n_\mathrm{ul}(\Gamma_\delta)} 
		\| \nabla \rho \|_{L^1(U)}
\end{align*}
by the Sobolev inequality.
 Since $\| \nabla\rho \|_{L^1} \leq \| \nabla\rho \|_{h^1}$ and $\| \nabla\rho \|_{h^1} \leq \| \rho \|_{F^1_{1,2}} \leq C\|\varphi\|_{L^1\left(B_\frac{1}{2}(x'_0)\right)}$, collecting these estimates yields
\[
	\left| \int_{B_\frac{1}{2}(x_0^{'})} v \cdot \mathbf{n} \varphi\; dx' \right| 
	\leq C \| \varphi \|_{L^1\left(B_\frac{1}{2}(x'_0)\right)} \left( \| v \|_{vbmo^{\mu,\nu}_\delta(\mathbf{R}^n_+)} + \| \operatorname{div}v \|_{L^n_\mathrm{ul}(\Gamma_\delta)} \right).
\]
This yields the desired estimate since $C^\infty_c\left(B_\frac{1}{2}(x'_0)\right)$ is dense in $L^1\left(B_\frac{1}{2}(x'_0)\right)$ and $C$ in the right-hand side is independent of $x'_0 \in \mathbf{R}^{n-1}$.
\end{proof}

We now consider a curved domain.
 Let $\Omega$ be a uniformly $C^2$ domain in $\mathbf{R}^n$ so that the reach $R_*$ of $\Gamma$ is positive and $\beta \in (0,1)$.
\begin{theorem} \label{NTG}
Let $\Omega$ be a uniformly $C^{2+\beta}$ domain in $\mathbf{R}^n$ with $n \geq 2$.
 Let $\mu,\nu,\delta$ be in $(0, \infty]$.
 Then there is a constant $C=C(\mu,\nu,\delta,\Omega)$ such that
\[
	\| v \cdot \mathbf{n} \|_{L^\infty(\Gamma)}
	\leq C \left( \| v \|_{vbmo^{\mu,\nu}_\delta(\Omega)} + \| \operatorname{div}v \|_{L^n_\mathrm{ul}(\Gamma_\delta)} \right)
\]
for all $v \in vbmo^{\mu,\nu}_\delta(\Omega)$.
\end{theorem}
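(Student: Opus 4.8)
The plan is to reduce the curved-domain estimate in Theorem \ref{NTG} to the half-space case Theorem \ref{NTH} by localization and flattening the boundary via normal coordinates. First I would fix a boundary point $x_0 \in \Gamma$ and, using that $\Omega$ is uniformly $C^{2+\beta}$ so that $d_\Omega$ is $C^{2+\beta}$ in a tubular neighborhood $\Gamma_{\delta_0}$ with $\delta_0 < R_*$, introduce the normal (principal) coordinate chart $\Psi$ in a neighborhood $U$ of $x_0$: a point $x$ near $\Gamma$ is written as $x = y' + s\,\mathbf{n}(y')$ with $y' \in \Gamma$ the nearest boundary point and $s = -d_\Omega(x)$, so that $\Psi$ maps $U\cap\Omega$ diffeomorphically onto a half-ball $B^+$ in $\mathbf{R}^n_+$ and flattens $\Gamma$ to $\{x_n=0\}$. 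Since the boundary is $C^{2+\beta}$, $\Psi$ and $\Psi^{-1}$ are global $C^{1+\beta}$-diffeomorphisms (after extending them suitably), so by Proposition \ref{CC} and Proposition \ref{CCL} the spaces $bmo$ and $F^1_{1,2}$ are preserved under composition with $\Psi$, with norm constants depending only on the $C^{2+\beta}$ bounds of $\Omega$.

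Next I would transport the vector field. The key point, worked out in the Appendix the paper promises, is that the normal coordinate change sends the vector field $v$ to a vector field $\tilde v = (\tilde v', \tilde v_n)$ on $B^+$ whose last component is, up to the Jacobian factor, exactly $\nabla d_\Omega \cdot v$ pulled back — that is, the ``normal component'' in the sense used to define $vbmo^{\mu,\nu}_\delta$ is intrinsic and is carried to the genuine $n$-th component in the flat picture. Hence $\tilde v \in vbmo^{\mu',\nu'}_{\delta'}(B^+)$ with controlled norm: the tangential components $\tilde v'$ are $bmo$ by Proposition \ref{CC} (multiplied by $C^\beta$ Jacobian entries, using the pointwise multiplier estimate \eqref{ME}), and the $b^{\nu'}$-seminorm of $\tilde v_n$ is controlled by $[\nabla d_\Omega\cdot v]_{b^\nu}$ since the map is bi-Lipschitz and preserves the distance function up to bounded factors. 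Likewise $\operatorname{div}\tilde v$ relates to $\operatorname{div}v$ with a $C^{1+\beta}$ change of variables, keeping it in $L^n_{\mathrm{ul}}$ of the relevant neighborhood. Then I would invoke the half-space result Theorem \ref{NTH} on the chart, obtaining
\[
	\|v\cdot\mathbf{n}\|_{L^\infty(\Gamma\cap U)} \leq C\left(\|v\|_{vbmo^{\mu,\nu}_\delta(\Omega)} + \|\operatorname{div}v\|_{L^n_{\mathrm{ul}}(\Gamma_\delta)}\right)
\]
with $C$ depending only on $\mu,\nu,\delta,n$ and the $C^{2+\beta}$ geometry of $\Omega$, but crucially not on $x_0$ because the uniform $C^{2+\beta}$ assumption makes the charts uniformly controlled. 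Taking the supremum over $x_0 \in \Gamma$ gives the global $L^\infty(\Gamma)$ bound. One should also reduce to finite exponents first via Proposition \ref{E1} and Theorem \ref{F3} so only the behavior of $v$ in $\Gamma_\delta$ matters, and one may need Proposition \ref{E1}(ii) and Proposition \ref{E2} to adjust $\delta,\nu$ to the size of the chart.

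The main obstacle I expect is the bookkeeping for the vector-field transformation under normal coordinates: verifying that $\nabla d_\Omega\cdot v$ really corresponds to the flat normal component and that the Jacobian and its inverse have $C^\beta$ (indeed $C^{1+\beta}$ where needed) regularity with bounds uniform along $\Gamma$, so that all the multiplier estimates \eqref{ME} and \eqref{MHA} apply with a single constant. This is precisely why the domain is assumed $C^{2+\beta}$ rather than merely $C^2$: one derivative is lost in passing to $\mathbf{n}$ and $\nabla d_\Omega$, and the multiplier theorem for $h^1$ and $bmo$ requires Hölder (not merely continuous) coefficients. A secondary technical point is patching the cutoff functions: the test function $\rho$ on the chart must be arranged even in $x_n$ and compactly supported so that the duality argument inside Theorem \ref{NTH} goes through, which is why the proof of Theorem \ref{NTH} was phrased locally on $U = B_1(x_0')\times(-\delta,\delta)$ to begin with. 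Once these geometric lemmas (collected in the Appendix) are in hand, the deduction of Theorem \ref{NTG} from Theorem \ref{NTH} is essentially a change of variables.
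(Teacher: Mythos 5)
Your overall strategy (localize, flatten by normal coordinates, use the Appendix fact that $\nabla d_\Omega\cdot v$ becomes the $n$-th component, multiplier estimates for the $C^\beta$ Jacobian, then duality) is the same as the paper's, but the specific reduction ``apply Theorem \ref{NTH} on the chart'' has a genuine gap, and it concerns exactly the divergence. The coordinate change that realizes Lemma \ref{VFG}, namely $\tilde v = A^{-1}(v\circ\psi)$ with $A=D\psi$, does \emph{not} satisfy $\operatorname{div}_y\tilde v=(\operatorname{div}_x v)\circ\psi$; one has $\operatorname{div}_y\tilde v=(\operatorname{div}_x v)\circ\psi+\sum_{j,k}\bigl(\partial_{y_j}A^{-1}_{jk}\bigr)(v_k\circ\psi)$, and since the normal-coordinate map is only $C^{1+\beta}$ (because $d_\Omega$ is $C^{2+\beta}$), the entries of $A^{-1}$ are merely $C^\beta$, so the correction term is a distributional derivative of a H\"older function multiplied by a $bmo$ function: it is not a function in $L^n_{\mathrm{ul}}$, and your claim that $\operatorname{div}\tilde v$ ``relates to $\operatorname{div}v$ with a $C^{1+\beta}$ change of variables, keeping it in $L^n_{\mathrm{ul}}$'' fails for this transformation. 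One could repair this with the Piola transform $J A^{-1}(v\circ\psi)$, which does preserve the divergence up to the factor $J$, but that is a different transformation from the one you (and Lemma \ref{VFG}) use, and you would then have to redo the bookkeeping for both components and still face the second problem below.

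The second problem is that Theorem \ref{NTH} cannot be invoked as a black box: it requires a field in $vbmo^{\mu,\nu}_\delta(\mathbf{R}^n_+)$ with $\operatorname{div}$ controlled in $L^n_{\mathrm{ul}}(\Gamma_\delta)$ on the whole half-space, whereas your transformed field lives only on a half-ball. Extending it (by Theorem \ref{PJM} or otherwise) preserves $bmo$ but gives no control whatsoever on the divergence of the extension, and cutting off reintroduces terms whose $L^n_{\mathrm{ul}}$ control needs a separate argument you do not supply. The paper's proof is structured precisely to avoid both difficulties: the divergence term $\int_\Omega(\operatorname{div}v_U)\rho\,dx$ is estimated directly in the original $x$-coordinates (where $v_U=v$ on the support of $\rho$) by H\"older and Sobolev, so the divergence is never transported through the $C^{1+\beta}$ chart; the normal coordinates, the zero/weighted-even extensions (note the tangential part is extended evenly with the weight $J(y',-y_n)/J(y',y_n)$ so that $w_jJ$ is even), the Jones extension and the $h^1$--$bmo$ duality are applied only to the term $\int_\Omega v_U\cdot\nabla\rho\,dx$, where no derivative ever falls on the vector field. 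So the correct route is not a reduction to Theorem \ref{NTH} but a re-run of its duality argument in the curved setting with the divergence term handled before flattening; as written, your proposal's key step would fail.
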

We shall prove this result by localizing the problems near the boundary and use normal (principal) coordinates. Let $\Omega$ be a uniformly $C^{2+\beta}$ domain. In other words, there exist $r_\ast, \delta_\ast > 0$ such that for each $z_0 \in \Gamma$, up to translation and rotation, there exists a function $h_{z_0} \in C^{2+\beta}(B_{r_\ast}(0^{'}))$ with
\begin{eqnarray*}
& & | \nabla^k h_{z_0} | \leq L \; \text{ in } \, B_{r_\ast}(0^{'}) \, \text{ for } \, k=0,1,2, \\
& & [ \nabla^2 h_{z_0} ]_{C^\beta(B_{r_\ast}(0^{'}))} < \infty, \, \nabla^{'} h_{z_0} (0^{'}) = 0^{'}, \, h_{z_0} (0^{'}) = 0
\end{eqnarray*}
such that the neighborhood
\[
U_{r_\ast,\delta_\ast,h_{z_0}}(z_0) := \{ (x^{'},x_n) \in \mathbf{R}^{n} \, | \, h_{z_0}(x^{'}) - \delta_\ast < x_n < h_{z_0}(x^{'}) + \delta_\ast, \, |x^{'}| < r_\ast \}
\]
satisfies
\[
\Omega \cap U_{r_\ast,\delta_\ast,h_{z_0}}(z_0) = \{ (x^{'},x_n) \in \mathbf{R}^{n} \, | \, h_{z_0}(x^{'}) < x_n < h_{z_0}(x^{'}) + \delta_\ast, \, |x^{'}| < r_\ast \}
\]
and
\[
\partial \Omega \cap U_{r_\ast,\delta_\ast,h_{z_0}}(z_0) = \{ (x^{'},x_n) \in \mathbf{R}^{n} \, | \, x_n = h_{z_0}(x^{'}), \, |x^{'}| < r_\ast \}.
\]

For $x \in \Omega$, let $\pi x$ be a point on $\Gamma$ such that $|x - \pi x| = d_{\Omega}(x)$. If $x$ is within the reach of $\Gamma$, then this $\pi x$ is unique. There exist $r < r_\ast$ and $\delta < \delta_\ast$ such that
\[
U(z_0) = \{ x \in U_{r_\ast,\delta_\ast,h_{z_0}}(z_0) \, | \, (\pi x)^{'} \in B_r(0^{'}), \, d_{\Gamma}(x) < \delta \}
\]
is contained in $U_{r_\ast,\delta_\ast,h_{z_0}}(z_0)$. Since $d_\Omega$ is $C^{2+\beta}$ in $\overline{\Gamma}_\sigma$ for $\sigma < R_*$ \cite[Chap.\,14, Appendix]{GT} \cite[\S 4.4]{KP}, we may take $\delta$ smaller (independent of $z_0$) so that $d_\Omega$ is $C^{2+\beta}$ in $\overline{U(z_0)\cap\Omega}$.

We next consider the normal coordinate in $U(z_0)$
\begin{eqnarray}
	\left \{
\begin{array}{l} \label{NC}
	x' = y' + y_n \nabla' d_\Omega \left(y', \psi(y') \right) \\
	x_n = \psi(y') + y_n \partial_{x_n} d_\Omega \left(y', \psi(y') \right)
\end{array}
\right.
\end{eqnarray}
or shortly
\[
	x = \pi x  - d_\Omega (x) \mathbf{n} (\pi x).
\]
Let this coordinate change be denoted by $x=\psi(y)$, $\psi \in C^{1+\beta}(B_r(0^{'}))$. Notice that $\nabla \psi (0) = I$. If we consider $r$ and $\delta$ small, this coordinate change is indeed a local $C^1$-diffeomorphism which maps $U(z_0)$ to $V$ where $V := B_r(0^{'}) \times (-\delta,\delta)$. Moreover, by \cite{Lew}, we extend $\psi$ to a global $C^1$-diffeomorphism $\widetilde{\psi}$ such that $\widetilde{\psi} \, |_V = \psi$ and $\| \nabla \widetilde{\psi} \|_{L^\infty(\mathbf{R}^n)} < 2$. Let the inverse of $\psi$ in $V$ be denoted by $\phi$, i.e. $\phi=\psi^{-1}$.
\begin{lemma} \label{VFG}
	Let $W$ be a vector field with measureble coefficient in $\Gamma_\sigma$, $\sigma<R_*$ of the form
\[
	W = \sum^n_{i=1} w_i \frac{\partial}{\partial x_i}.
\]
Let $y$ be the normal coordinate such that $y_n = d_\Omega(x)$.
 Let $\tilde{W}$ be $W$ in $y$ coordinate of the form $\overline{W}= \sum^n_{j=1} \tilde{w}_j(y) \partial/\partial y_j$. 
 Then
\[
	\tilde{w}_n(y) = \nabla d_\Omega \left(x(y)\right) \cdot w\left(x(y)\right). 
\]
\end{lemma}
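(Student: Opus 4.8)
The statement is essentially a coordinate-transformation rule for the components of a vector field (a contravariant 1-tensor), so the plan is to track how the coordinate basis vectors change. Write the coordinate change as $x = \psi(y)$ with inverse $y = \phi(x)$, and recall from \eqref{NC} that the last coordinate is the signed distance, i.e.\ $y_n = d_\Omega(x)$ on $\Gamma_\sigma$. The standard transformation law for the components of a vector field says
\[
	\tilde w_j(y) = \sum_{i=1}^n \frac{\partial y_j}{\partial x_i}\bigl(x(y)\bigr)\, w_i\bigl(x(y)\bigr) = \sum_{i=1}^n \frac{\partial \phi_j}{\partial x_i}\bigl(x(y)\bigr)\, w_i\bigl(x(y)\bigr),
\]
which one gets by equating $\sum_i w_i\,\partial/\partial x_i = \sum_j \tilde w_j\,\partial/\partial y_j$ and using $\partial/\partial x_i = \sum_j (\partial y_j/\partial x_i)\,\partial/\partial y_j$ via the chain rule. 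The whole lemma then reduces to identifying the $n$-th row of the Jacobian matrix $D\phi = (D\psi)^{-1}$, i.e.\ to showing
\[
	\frac{\partial \phi_n}{\partial x_i}\bigl(x(y)\bigr) = \partial_{x_i} d_\Omega\bigl(x(y)\bigr), \qquad i = 1,\dots,n.
\]
But since $\phi_n(x) = y_n = d_\Omega(x)$ identically on the relevant neighborhood (that is precisely how the normal coordinate is set up in \eqref{NC}, where $y_n$ is the distance to $\Gamma$), this is immediate: $\partial \phi_n/\partial x_i = \partial d_\Omega/\partial x_i$. Substituting into the transformation law gives
\[
	\tilde w_n(y) = \sum_{i=1}^n \partial_{x_i} d_\Omega\bigl(x(y)\bigr)\, w_i\bigl(x(y)\bigr) = \nabla d_\Omega\bigl(x(y)\bigr)\cdot w\bigl(x(y)\bigr),
\]
which is the asserted formula.

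The only points needing a little care are regularity and the precise meaning of the formula, since $W$ has merely measurable coefficients. Here the plan is to note that the coordinate change $\psi$ (and its inverse $\phi$) is $C^{1+\beta}$ because $\Omega$ is uniformly $C^{2+\beta}$ and hence $d_\Omega \in C^{2+\beta}$ in $\overline{\Gamma}_\sigma$ for $\sigma<R_*$ (as recalled in the text just before \eqref{NC}); in particular $D\phi$ is a genuine continuous — indeed $C^\beta$ — matrix field, so the pointwise transformation law above makes sense a.e.\ and the product $\partial_{x_i}d_\Omega \cdot w_i$ is a well-defined measurable function. One should also record that $\nabla d_\Omega = (\nabla' d_\Omega, \partial_{x_n} d_\Omega)$ is exactly the vector appearing in the second slot of the Jacobian of $\psi$ in \eqref{NC}, which is the consistency check that $\partial/\partial y_n = \sum_i (\partial x_i/\partial y_n)\,\partial/\partial x_i = \sum_i \partial_{x_i}d_\Omega\cdot\partial/\partial x_i$ — i.e.\ $\partial/\partial y_n = \nabla d_\Omega\cdot\nabla_x$ is the normal derivative, which is the geometric content of the lemma.

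I do not expect a genuine obstacle here: the lemma is a bookkeeping identity whose entire substance is ``$y_n = d_\Omega(x)$, so the $n$-th row of $D\phi$ is $\nabla d_\Omega$.'' If anything, the one step worth stating explicitly rather than waving through is the passage from the equality of the differential operators $\sum_i w_i\,\partial_{x_i} = \sum_j \tilde w_j\,\partial_{y_j}$ to the matrix identity for the components — i.e.\ applying both sides to the coordinate functions $y_k$ and using $\partial_{x_i} y_k = \partial_{x_i}\phi_k$ — and then specializing $k=n$. Everything else is the chain rule together with the defining property of normal coordinates.
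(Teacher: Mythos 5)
Your proof is correct, but it takes a genuinely different route from the paper's. The paper works with the forward map $x=\psi(y)$: it writes out the Jacobi matrix $A=(\partial x_i/\partial y_j)$ explicitly from \eqref{NC}, checks that its last column $-{}^t\mathbf{n}=\nabla d_\Omega$ is a unit vector orthogonal to the other columns (using that both the graph tangent vectors and $\partial_j\mathbf{n}$ are tangential, since $\partial_j\mathbf{n}\cdot\mathbf{n}=\tfrac12\partial_j|\mathbf{n}|^2=0$), and then invokes a small linear-algebra fact (Proposition \ref{LA}) to conclude that the last row of $A^{-1}$ is ${}^t\vec a_n=\nabla d_\Omega$, whence $\tilde w=A^{-1}w$ gives the claim. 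You instead differentiate the identity $\phi_n(x)=y_n=d_\Omega(x)$ for the inverse map $\phi=\psi^{-1}$, so the $n$-th row of $D\phi=(D\psi)^{-1}$ is $\nabla d_\Omega$ immediately, with no orthogonality computation and no auxiliary proposition; this also hands you $\nabla d_\Omega$ evaluated at $x(y)$ itself, whereas the paper's column $\vec a_n$ is evaluated at the projected boundary point $\bigl(y',\psi(y')\bigr)$ and the identification with $\nabla d_\Omega\bigl(x(y)\bigr)$ tacitly uses constancy of $\nabla d_\Omega$ along normals within the reach. What your shortcut requires in exchange is differentiability of the inverse chart, i.e.\ that $\phi(x)=\bigl((\pi x)',d_\Omega(x)\bigr)$ is $C^1$ on $\Gamma_\sigma$, $\sigma<R_*$ — which you correctly justify via the regularity of $d_\Omega$ inside the reach — while the paper only needs invertibility of $A$ for $y_n<R_*$; moreover the paper's explicit column structure of $A$ is reused elsewhere (e.g.\ the Jacobian $J$ and $\partial_{y_n}\psi$ in the proof of Theorem \ref{NTG}), so its longer computation is not wasted. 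Your handling of the merely measurable coefficients (pointwise a.e.\ transformation against a continuous Jacobian) matches the paper's implicit treatment and is fine.
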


We shall prove this lemma in Appendix which follows from a simple linear algebra.
\begin{proof}[Proof of Theorem \ref{NTG}]
We first observe that the restriction $v$ on $U(z_0) \cap \Omega$ is in $bmo^\infty_\infty \left( U(z_0)\cap\Omega \right)$. By considering the following equivalent definition of the seminorm $[f]_{BMO^\infty(D)}$ where
\[
[f]_{BMO^\infty(D)} = \underset{B_r(x) \subset D}{\mathrm{sup}} \, \underset{c \in \mathbf{R}}{\mathrm{inf}} \,\frac{1}{|B_r(x)|} \int_{B_r(x)} | f(y) - c | \, dy,
\]
see \cite[Proposition 3.1.2]{Gra}, we can deduce that the space $bmo^\infty_\infty$ on a bounded domain is independent of bi-Lipschitz coordinate change. We introduce normal coordinate for a vector field $v=\sum^n_{i=1} v_i \partial/\partial x_i$ with $v_i \in bmo^\infty_\infty \left( U(z_0)\cap\Omega \right)$. Let $w$ be the transformed vector field of normal coordinate $y$. By Lemma \ref{VFG}, $w_n$ of $w=\sum^n_{i=1} w_i \partial/\partial y_i$ fulfills $w_n = \nabla d_\Omega \left( x(y) \right) \cdot v \left( x(y) \right)$. Since $v \in vbmo^{\mu,\nu}_\delta(\Omega)$, this implies that $w \in bmo^\infty_\infty(V\cap \mathbf{R}_{+}^n)$ and moreover,
\[
\underset{\ell < \delta, \, B_\ell(x) \subset V}{\mathrm{sup}} \, \ell^{-n} \int_{B_\ell(x) \cap \mathbf{R}_{+}^n} | w_n | \, dy < \infty.
\]
Thus, as in the proof of Theorem \ref{NTH} the zero extension of $w_n$ for $y_n < 0$ is in $bmo^\infty_\infty(V)$, we still denote this extension by $w_n$. Let $J=J(y)$ denote the Jacobian of the mapping $y \mapsto x$ in $V$. For tangential part $w^{'}$ of $w=(w^{'},w_n)$, we take an even extension with weight $J$ of the form
\begin{eqnarray}
	\hat{w}'(y', y_n) = \left \{
\begin{array}{ll}
	w'(y', y_n), & y_n > 0 \\
	w'(y', -y_n)J(y',-y_n)/J(y',y_n), & y_n < 0
\end{array}
\right.
\end{eqnarray}
and set $\tilde{w}(y', y_n)=(\hat{w}', w_n)$. Let $\overline{w}^{'}$ denote the normal even extension of $w^{'}$ to $V$, thus $w \in bmo_\infty^\infty(V \cap \mathbf{R}_{+}^{n})$ implies that $\overline{w}^{'} \in bmo_\infty^\infty(V)$. Let $f$ be the function defined on $V$ such that $f \equiv 1$ for $y_n \geq 0$ and $f = J(y',-y_n)/J(y',y_n)$ for $y_n <0$. Since $J(y)^{-1} = | \mathrm{det} \, D\psi(y) |^{-1} = | \mathrm{det} \, D\phi (\psi(y)) |$ for $y \in V$, we have that $f \in C^\beta(V)$. Notice that $\hat{w}'(y) = \overline{w}' (y) f(y)$, therefore by Theorem \ref{PMU}, we can deduce that $\tilde{w}$ belongs to $bmo_\infty^\infty(V)$. By Theorem \ref{PJM}, the Jones' extension $w_U$ of $\tilde{w}$ belongs to $bmo^\infty_\infty(\mathbf{R}^n)$. Its expression in $x$ coodinate is $v_U$ which is only defined near $\Gamma$. 

If the support of $\rho$ is in $U(z_0)$, then integration by parts implies that
\begin{equation} \label{INT2}
	\int_\Gamma v_U \cdot \mathbf{n} \rho\, d\mathcal{H}^{n-1}
	= \int_\Omega (\operatorname{div}v_U) \rho\, dx
	- \int_\Omega v_U \cdot \nabla\rho\, dx.
\end{equation}
We shall estimate the left-hand side as in the case of $\mathbf{R}^n_+$. The first integral in the right-hand side can be estimated similarly as in the proof of Theorem \ref{NTH}. It is sufficient to only consider the second integral. Let $\Psi: B_r(0') \to \Gamma \cap U(z_0)$ by $(y',0) \mapsto (y',h_{z_0}(y'))$. Extend $h_{z_0} \in C^2(B_r(0'))$ to $\tilde{h} \in C_c^2(\mathbf{R}^{n-1})$ such that $\tilde{h} \, |_{B_r(0')} = h_{z_0}$. Define $\widetilde{\Psi} : \mathbf{R}^{n-1} \to \tilde{h}(\mathbf{R}^{n-1})$ by $(y',0) \mapsto (y',\tilde{h}(y'))$. Hence $\widetilde{\Psi} \, |_{B_r(0')} = \Psi$. Extend further $\widetilde{\Psi}$ to $\widetilde{\Psi}^\ast : \mathbf{R}^n \to \mathbf{R}^n$ by $(y',d) \mapsto \widetilde{\Psi}(y',0) + (0',d)$. Notice that this $\widetilde{\Psi}^\ast$ is a global $C^2$-diffeomorphism whose derivatives are bounded in $\mathbf{R}^n$ up to second-order. Let $\zeta>0$ be a constant, for $\varphi \in C_c^1(\Gamma \cap \zeta U(z_0))$, we have that $\varphi \circ \Psi \in C_c^1(B_\zeta (0'))$. Let $\widetilde{\sigma} = \mathrm{Ext} \, (\varphi \circ \Psi)$ as in the proof of Theorem \ref{NTH} and let $\sigma = \widetilde{\sigma} \circ (\widetilde{\Psi}^\ast)^{-1}$. With this choice of $\sigma$, we have that for $(y',h_{z_0}(y')) \in \Gamma \cap \zeta U(z_0)$,
\[
\sigma(y',h_{z_0}(y')) = \widetilde{\sigma} \circ (\widetilde{\Psi}^\ast)^{-1} (y',h_{z_0}(y')) = \widetilde{\sigma} (y',0) = \varphi \circ \Psi (y',0) = \varphi (y',h_{z_0}(y')).
\]
Thus $\varphi$ is an extension of $\sigma$. Since $(\widetilde{\Psi}^\ast)^{-1}$ is a global $C^2$-diffeomorphism and $\widetilde{\sigma} \in F_{1,2}^1(\mathbf{R}^n)$, we observe that $\sigma \in F_{1,2}^1(\mathbf{R}^n)$, see e.g. see Proposition \ref{CCL} or \cite[Section 4.3.1]{Tr92}.

For each $z_0 \in \Gamma$, there exists $\epsilon_{z_0} > 0$ such that we can find a cutoff function $\theta_{z_0} \in C_c^\infty(U(z_0))$ for which $\theta_{z_0} \equiv 1$ within $\epsilon_{z_0} U(z_0)$ and 
\[
\sum_{|\alpha| \leq 2} \| D^\alpha \theta_{z_0} \|_{L^\infty(\mathbf{R}^n)} \leq M
\]
for some fixed universal constant $M>1$ independent of $z_0$. By multiplying this cutoff function $\theta_{z_0}$, we have that $\rho = \theta_{z_0} \sigma \in F_{1,2}^1(\mathbf{R}^n)$ and $\| \rho \|_{F_{1,2}^1(\mathbf{R}^n)} \leq M \cdot \| \sigma \|_{F_{1,2}^1(\mathbf{R}^n)}$. Hence we take the constant $\zeta$ above to be $\epsilon_{z_0}$.

By coodinate change, we observe that
\[
	\int_\Omega v_U \cdot \nabla\rho\, dx
	= \int_{U(z_0) \cap \Omega} \sum^n_{i=1} v_i \frac{\partial}{\partial x_i} \rho\, dx
	= \int_{V \cap \mathbf{R}_{+}^n} \sum^n_{j=1} w_{U_j}(y)  J(y) \frac{\partial}{\partial y_j} \bigl( \rho \circ \psi (y) \bigr) dy.
\]
The $n$-th component equals
\[
	\int_{V \cap \mathbf{R}_{+}^n} w_{U_n}(y) J(y) \frac{\partial}{\partial y_n} \bigl( \rho \circ \psi (y) \bigr) dy
	= \int_{V} w_{U_n}(y) J(y) \frac{\partial}{\partial y_n} \bigl( \rho \circ \psi (y) \bigr) dy
\]
since $w_{U_n}$ equals zero for $y_n<0$. Consider extensions of H$\ddot{\mathrm{o}}$lder functions \cite{McS} and local diffeomorphism \cite{Lew}, by the $F^0_{1,2}-F^0_{\infty,2}$ duality \cite[Theorem 3.22]{Sa} and Proposition \ref{CC}, we conclude that
\begin{eqnarray*}
\left| \int_V w_{U_n} (y) J(y) \frac{\partial}{\partial y_n} \bigl( \rho \circ \psi (y) \bigr) dy \right| &\leq& C \cdot \sum_{i=1}^n \| w_{U_n} \|_{bmo} \cdot \| J \|_{C^\beta(V)} \cdot \| \partial_{y_n} \psi \|_{C^{\beta}(V)} \cdot \| \nabla \rho \circ \widetilde{\psi} \|_{h^1} \\
&\leq& C \cdot \| w_{U_n} \|_{bmo} \cdot \| \nabla \rho \|_{h^1}.
\end{eqnarray*}

For tangential part we may assume that
\begin{equation} \label{EEV}
	(\rho \circ \psi) (y', y_n) = (\rho \circ \psi) (y', -y_n) 
	\; \, \text{ for } \; y_n <0.
\end{equation}
In fact, for a given $\rho$ we take
\[
g(y',y_n) = \bigl(\rho \circ \psi (y',y_n) + \rho \circ \psi (y',-y_n) \bigr)/2
\]
which satisfies evenness $g(y',y_n) = g(y',-y_n)$ and 
\[
g(y',0) = \theta \circ \psi (y',0) \cdot \sigma \circ \psi (y',0) = \theta(y',h_{z_0}(y')) \cdot \varphi(y',h_{z_0}(y')).
\]
It suffices to take $\rho$ such that $\rho \circ \psi (y) = g(y)$. Thus, we may assume that $\rho \circ \psi$ is even in $y_n$ so that $\partial_{y_j} ( \rho \circ \psi)$ is also even in $y_n$ for $j = 1, 2, \ldots, n-1$. Since $w_{U_j} J$ is even in $y_n$ for $y$ in $V$, we observe that
\[
	\int_{V \cap \mathbf{R}_{+}^n} w_{U_j} (y) J(y) \frac{\partial}{\partial y_j} \bigl( \rho \circ \psi \bigr) \, dy
	= \frac{1}{2} \int_{V} w_{U_j} (y) J(y) \frac{\partial}{\partial y_j} \bigl( \rho \circ \psi \bigr) \, dy
\]
for $1 \leq j \leq n-1$. Similar to the case for the $n$-th component, we thus conclude that
\[
	\left| \int_V w_{U_j} (y) J(y) \frac{\partial}{\partial y_j} (\rho \circ \psi) \, dy \right|
	\leq C \cdot \| w_{U_j} \|_{bmo} \cdot \| \nabla \rho \|_{h^1}.
\]

Collecting these estimates, we conclude that
\begin{align*}
	\left| \int_{\Gamma \cap \epsilon_{z_0} U(z_0)} v \cdot \mathbf{n}\, \varphi\; dx^{n-1} \right|
	&\leq C \| w_U \|_{bmo} \| \nabla\rho \|_{h^1} \\
	&\leq C \| v \|_{vbmo^{\mu,\nu}_\delta(\Omega)} \| \varphi \|_{L^1( \Gamma \cap \epsilon_{z_0} U(z_0))}.
\end{align*}
Thus $\| v \cdot \mathbf{n} \|_{L^\infty} \leq C \|v\|_{vbmo^{\mu,\nu}_\delta(\Omega)}$.
\end{proof}
\begin{remark} \label{NTGB}
\begin{enumerate}
\item[(i)] Since $BMO_b^{\mu,\nu} \subset vbmo_\delta^{\mu,\nu}$ for $\delta < \infty$, the estimate in Theorem \ref{NTG} holds if we replace $vbmo^{\mu,\nu}_\delta$ by $BMO^{\mu,\nu}_b$. Moreover, since we are able to use zero extension in this case. We can follow the proof of Theorem \ref{NTG} directly without the necessity to invoke normal coordinates. We shall state a version of Theorem \ref{NTG} for $BMO_b^{\mu,\nu}$ in the end of this section.
\item[(i\hspace{-0.1em}i)] By Theorem \ref{V1} we may replace $vbmo^{\mu,\nu}_\delta$ by $vBMO^{\mu,\nu}(\Omega)$ in the estimate in Theorem \ref{NTG} since we may always take $\delta\leq\nu < R_\ast$ provided that $\Omega$ is a bounded or an exterior domain.
\end{enumerate}
\end{remark}

\begin{theorem} \label{TRBB}
Let $\Omega$ be a uniformly $C^{1+\beta}$ domain in $\mathbf{R}^n$ with $n \geq 2$. Let $\mu,\nu,\delta$ be in $(0,\infty]$. Then there is a constant $C=C(\mu,\nu,\delta,\Omega)$ such that 
\[
\| v \cdot \mathbf{n} \|_{L^\infty(\Gamma)} \leq C \cdot ( \| v \|_{BMO_b^{\mu,\nu}(\Omega)} + \| \mathrm{div} \, v \|_{L_{\mathrm{ul}}^n(\Gamma_\delta)} )
\]
for all $v \in BMO_b^{\mu,\nu}(\Omega)$.
\end{theorem}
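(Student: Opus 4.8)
The plan is to follow the proof of Theorem~\ref{NTG} (the curved-domain case) essentially verbatim, but to exploit the fact that for $f \in BMO_b^{\mu,\nu}(\Omega)$ one may use \emph{zero extension} of every component of the vector field rather than normal coordinates, as indicated in Remark~\ref{NTGB}(i). First I would reduce to the regularity needed: a uniformly $C^{1+\beta}$ domain is a uniform domain (a Lipschitz graph locally), so Jones' extension Theorem~\ref{PJ} applies; and since the normal trace and the integration-by-parts formula \eqref{INT2} only require $C^1$ regularity of $\Gamma$ together with $v,\operatorname{div}v \in L^p_{\mathrm{loc}}$ for some $p$, and the interpolation inequality puts $v \in L^p_{\mathrm{loc}}$ for all $p$, the trace $v\cdot\mathbf{n}$ is well-defined. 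Note that $BMO_b^{\mu,\nu}\subset bmo^\mu_\nu$ by Theorem~\ref{B2}, and by Proposition~\ref{E1} we may assume $\mu,\nu,\delta$ are finite and, shrinking if needed, all equal to a single small parameter; hence locally near any $z_0\in\Gamma$ the restriction of $v$ lies in $bmo^\infty_\infty(U(z_0)\cap\Omega)$ with the extra property that $[v]_{b^\nu}<\infty$ for \emph{all} components, not merely the normal one.

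Next I would localize. Fix $z_0 \in \Gamma$ and a boundary chart as in the setup before Theorem~\ref{NTG}: a Lipschitz graph $x_n = h_{z_0}(x')$ on $B_r(0')$ and neighborhood $U(z_0)$. Because every component of $v$ has finite $b^\nu$-seminorm (this is where $BMO_b^{\mu,\nu}$ is genuinely stronger than $vbmo^{\mu,\nu}_\delta$ and makes life easier), Theorem~\ref{ZE} gives that the zero extension $\bar v$ of $v|_{U(z_0)\cap\Omega}$ across $\Gamma$ lies in $BMO^\mu$, hence in $bmo^\infty_\infty$ of the full neighborhood; there is no need to distinguish tangential and normal parts or to pass to normal coordinates. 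Then Theorem~\ref{PJM} produces a Jones extension $v_U \in bmo^\infty_\infty(\mathbf{R}^n)$ with $\|v_U\|_{bmo} \leq C\|v\|_{BMO_b^{\mu,\nu}(\Omega)}$.

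Then I would carry out the duality step exactly as in Theorem~\ref{NTG}. For $\varphi \in C^1_c(\Gamma \cap \epsilon_{z_0}U(z_0))$, flatten the boundary by the global $C^2$-diffeomorphism $\widetilde\Psi^\ast$ (the graph map, which needs only $h_{z_0}\in C^{1+\beta}$; note $C^{1+\beta}\subset C^1$, and the graph flattening $\widetilde\Psi^\ast$ is genuinely $C^{1+\beta}$, which is enough for Proposition~\ref{CCL} once we observe that proposition needs a global $C^{1+\beta}$-diffeomorphism — this is the one regularity point to check carefully), set $\sigma = \widetilde\sigma \circ(\widetilde\Psi^\ast)^{-1}$ with $\widetilde\sigma = \mathrm{Ext}(\varphi\circ\Psi)\in F^1_{1,2}$, multiply by a cutoff $\theta_{z_0}$ to get $\rho=\theta_{z_0}\sigma \in F^1_{1,2}(\mathbf{R}^n)$ with $\|\rho\|_{F^1_{1,2}}\leq C\|\varphi\|_{L^1}$ by the multiplier estimate \eqref{MHA}. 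Apply \eqref{INT2}: the divergence term is bounded by $\|\operatorname{div}v\|_{L^n_{\mathrm{ul}}(\Gamma_\delta)}\|\nabla\rho\|_{L^1}$ via Sobolev embedding exactly as in Theorem~\ref{NTH}, and the term $\int_\Omega v_U\cdot\nabla\rho\,dx$ is bounded by $\|v_U\|_{bmo}\|\nabla\rho\|_{h^1}$ via $h^1$–$bmo$ duality (here no evenness reflection is needed since $\bar v$ is already the zero extension and we work directly on $\Omega$, or equivalently one may still symmetrize $\rho$ in the normal direction after flattening). Collecting and using density of $C^1_c$ in $L^1(\Gamma\cap\epsilon_{z_0}U(z_0))$ with constants uniform in $z_0$ (by uniform $C^{1+\beta}$ regularity) yields the claimed estimate.

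The main obstacle is the reduced regularity $C^{1+\beta}$ versus the $C^{2+\beta}$ used in Theorem~\ref{NTG}. In Theorem~\ref{NTG} the $C^2$ regularity was needed to make sense of $d_\Omega \in C^{2+\beta}$ and of the normal-coordinate change — but for $BMO_b^{\mu,\nu}$ we discard normal coordinates entirely and only use the graph-flattening map $\widetilde\Psi^\ast$, which inherits $C^{1+\beta}$ (indeed $C^2$ would require $h\in C^2$, so with $h\in C^{1+\beta}$ the map $\widetilde\Psi^\ast$ is $C^{1+\beta}$); I must check that Proposition~\ref{CCL} and Proposition~\ref{CC} go through for a global $C^{1+\beta}$-diffeomorphism — they do, since the Jacobian $J_{(\widetilde\Psi^\ast)^{-1}}$ is then $C^\beta$, which is exactly the hypothesis in \eqref{MHA} and in the proofs of those propositions. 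The only other delicate point is that the integration-by-parts identity \eqref{INT2} and the definition of $v_U\cdot\mathbf{n}$ require $C^1$ boundary and $v_U,\operatorname{div}v_U\in L^p_{\mathrm{loc}}$; the latter follows since $v_U\in bmo \subset L^p_{\mathrm{loc}}$ for all $p$ by interpolation, and $\operatorname{div}v_U \in L^n_{\mathrm{ul}}$ by hypothesis on the relevant neighborhood.
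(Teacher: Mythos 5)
Your proposal is correct and follows essentially the same route as the paper's own proof: zero extension of all components (justified by the $b^\nu$ bound via Theorem \ref{ZE}), Jones' extension on the chart neighborhood by Theorem \ref{PJM}, and the same duality argument as in Theorem \ref{NTG} with the graph map $\widetilde{\Psi}^\ast$ now only a global $C^{1+\beta}$-diffeomorphism, handled by Proposition \ref{CCL}. The regularity point you flag is exactly the one the paper relies on, so no gap remains.
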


\begin{proof}
For $z_0 \in \Gamma$, let $U(z_0) = U_{r_\ast, \delta_\ast, h_{z_0}}(z_0)$ with $\delta_\ast \leq R_\ast$. We then follow the proof of Theorem \ref{NTG} without invoking the normal coordinates. For $v \in BMO_b^{\mu,\nu}(\Omega)$, let $v_0$ be the zero extension of $v$. We have that $v_0 \in bmo_\infty^\infty(U(z_0))$. Let $v_U$ be the Jones' extension of $r_{U(z_0)} v_0$ by Theorem \ref{PJM} where $r_{U(z_0)} v_0$ denotes the restriction of $v_0$ on $U(z_0)$. For $\varphi \in C_c^1(\Gamma \cap \frac{1}{2} U(z_0))$, we construct the function $\sigma$ in the same way as in the proof of Theorem \ref{NTG}. Since the boundary $\Gamma$ is uniformly $C^{1+\beta}$, $\widetilde{\Psi}^\ast$ is a global $C^{1+\beta}$-diffeomorphism. By Proposition \ref{CCL}, we have that $\sigma = \widetilde{\sigma} \circ (\widetilde{\Psi}^\ast)^{-1} \in F_{1,2}^1(\mathbf{R}^n)$. Pick $\theta$ in $C_c^\infty(U(z_0))$ such that $\theta \equiv 1$ within $\frac{1}{2} U(z_0)$ and let $\rho = \theta \sigma$, we deduce that $\rho \in F_{1,2}^1(\mathbf{R}^n)$ and 
\[
\left| \int_\Omega v_U \cdot \nabla \rho \, dx \right| \leq C \cdot \| v_U \|_{bmo} \cdot \| \nabla \rho \|_{h^1} \leq C \cdot \| v \|_{BMO_b^{\mu,\nu}(\Omega)} \cdot \| \nabla \rho \|_{h^1}.
\] 

Therefore,
\[
\left| \int_{\Gamma \cap \frac{1}{2} U(z_0)} v \cdot \mathbf{n}\, \varphi\; dx^{n-1} \right| \leq C \cdot \| v \|_{BMO_b^{\mu,\nu}(\Omega)} \cdot \| \varphi \|_{L^1( \Gamma \cap \frac{1}{2} U(z_0))}.
\]
The proof is therefore complete.
\end{proof}
%
\section{Appendix} \label{APP} 

We shall prove Lemma \ref{VFG}.
 We first recall a simple property of a matrix.
\begin{proposition} \label{LA}
Let $A$ be an invertible matrix
\[
	A = (\vec{a}_1, \ldots, \vec{a}_n) 
\]
when $\vec{a}_j =^t(a_{ij})_{1\leq i \leq n}$ is an column vector. 
 Assume that $\vec{a}_n$ is a unit vector and orthogonal to $\vec{a}_j$ with $1 \leq j \leq n-1$.
 Then $n$-row vector of $A^{-1}$ equals $^t\vec{a}_n$.
 In other words, if one writes $A^{-1} = (b_{ij})_{1\leq i,j \leq n}$, then $b_{nj}=a_{jn}$ for $1\leq j \leq n$.
\end{proposition}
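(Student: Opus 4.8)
The plan is to use the standard characterization of the rows of an inverse matrix rather than any cofactor computation. Since $A$ is invertible, $A^{-1}$ is the unique matrix with $A^{-1}A = I$; reading off the $n$-th row, this says that the $n$-th row of $A^{-1}$ is the unique row vector $\vec{r}$ satisfying $\vec{r}\,A = {}^t\vec{e}_n$, where $\vec{e}_n$ denotes the $n$-th standard basis vector. Uniqueness here is precisely the invertibility of $A$: if $\vec{r}_1 A = \vec{r}_2 A$ then $(\vec{r}_1 - \vec{r}_2)A = 0$ forces $\vec{r}_1 = \vec{r}_2$.

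It then remains to verify that $\vec{r} := {}^t\vec{a}_n$ does the job. Writing out the product, the $k$-th entry of the row vector $({}^t\vec{a}_n)\,A$ is $\sum_{i=1}^n a_{in}a_{ik} = \vec{a}_n \cdot \vec{a}_k$. By hypothesis $\vec{a}_n$ is orthogonal to $\vec{a}_k$ for $1 \leq k \leq n-1$, so this entry vanishes for $k < n$; and $\vec{a}_n \cdot \vec{a}_n = |\vec{a}_n|^2 = 1$ since $\vec{a}_n$ is a unit vector. Hence $({}^t\vec{a}_n)\,A = {}^t\vec{e}_n$, and by the uniqueness noted above the $n$-th row of $A^{-1}$ equals ${}^t\vec{a}_n$, i.e. $b_{nj} = a_{jn}$ for $1 \leq j \leq n$, which is the claim.

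The argument is entirely routine linear algebra and there is no genuine obstacle; the only point requiring a little care is bookkeeping the distinction between the $j$-th \emph{column} $\vec{a}_j$ of $A$ and the entries of the rows of $A^{-1}$, that is, recognizing that the identity $\vec{r}\,A = {}^t\vec{e}_n$ encodes exactly the inner products $\vec{a}_n \cdot \vec{a}_k$. (One could alternatively invoke Cramer's rule together with the orthogonality of $\vec{a}_n$ to the remaining columns, but the direct verification above is the cleanest route.)
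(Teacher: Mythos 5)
Your proof is correct and takes essentially the same route as the paper: both read off the $n$-th row of $A^{-1}$ from the relation $A^{-1}A=I$ and conclude using the orthogonality of $\vec{a}_n$ to the other columns together with $|\vec{a}_n|=1$. The only cosmetic difference is that you verify directly that ${}^t\vec{a}_n$ solves $\vec{r}\,A={}^t\vec{e}_n$ and appeal to uniqueness of the inverse, whereas the paper argues that the row must be parallel to $\vec{a}_n$ because $\vec{a}_1,\ldots,\vec{a}_{n-1}$ span the orthogonal complement of $\vec{a}_n$.
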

\begin{proof}
By definition the row vector $\vec{b}=(b_{nj})_{1 \leq j \leq n}$ must satisfies $\vec{b}\cdot\vec{a}_j=0$ ($j=1,\ldots,n-1$), $\vec{b}\cdot\vec{a}_n=1$.
 Since $\left\{\vec{a}_j\right\}^{n-1}_{j=1}$ spans $\mathbf{R}^{n-1}$ orthogonal to $\vec{a}_n$, first identities imply that $\vec{b}$ is parallel to $\vec{a}_n$.
We thus conclude that $\vec{b}=\vec{a}_n$ since $\vec{b}\cdot\vec{a}_n=1$ and $|\vec{a}_n|=1$.
\end{proof}
\begin{proof}[Proof of Lemma \ref{VFG}]
We recall the explicit representation \eqref{NC} of the normal coordinate.
 The Jacobi matrix from $y \longmapsto x$ is of the form
\[
	A = (\vec{a}_1, \ldots, \vec{a}_n)
\]
with $\vec{a}_j =^t \bigl(\delta_{ij} - y_n \partial_j \mathbf{n}_i \left(y',\psi(y')\right),\ \partial_j \psi(y') - y_n \partial_j \mathbf{n}_n \left(y',\psi(y')\right) \bigr)_{1 \leq i \leq n-1}$, $1 \leq j \leq n-1$,
\[
	\vec{a}_n = -^t\mathbf{n} \left(y',\psi(y')\right) \quad\text{where}\quad
	\mathbf{n} = -\nabla d_\Omega.
\]
Note that the vector $\left(\delta_{ij}, \partial_j \psi(y') \right)_{1\leq i \leq n-1}$ is a tangential vector to $\Gamma$.
 Moreover, $(\partial_j \mathbf{n}_1, \ldots, \partial_j \mathbf{n}_n)$ is also tangential since $\partial_j \mathbf{n}\cdot \mathbf{n}=\partial_j |\mathbf{n}|^2/2=0$.
 Thus $\vec{a}_j$ is orthogonal to $\vec{a}_n$ for $1\leq j \leq n-1$.
The invertibility of $A$ is guaranteed if $y_n<R_*$.

By a chain rule we have
\begin{align*}
	\overline{w} &= \sum^n_{j=1} \tilde{w_j} (y) (\partial/\partial y_j) \\
	&= \sum^n_{i=1} \sum^n_{j=1} \tilde{w_j} \frac{\partial x_i}{\partial y_j} \frac{\partial}{\partial x_i} 
\end{align*}
so that
\[
	w_i \left(x(y)\right) = \sum^n_{j=1} \tilde{w_j} (y) \frac{\partial x_i}{\partial y_j} 
	\quad\text{i.e.,}\quad w = A \tilde{w},
\]
where $A=(\partial x_i/\partial y_j)_{1\leq i,j \leq n}$, $\tilde{w}=^t(\tilde{w_i}, \ldots, \tilde{w_n})$, 
$w=^t(w_1, \ldots, w_n)$.
 Thus
\[
	\tilde{w} = A^{-1} w.
\]
By Proposition \ref{LA}, the last row of $A^{-1}$ equals $\nabla d_\Omega$.

We thus conclude that $\tilde{w}_n = \nabla d_\Omega \cdot w$. This is what we would like to prove.
\end{proof}
\section*{Acknowledgement}
The first author is partly supported by the Japan Society for the Promotion of Science through grants Kiban A (No. 19H00639), Challenging Pioneering Research (Kaitaku) (No. 18H05323), Kiban A (No. 17H01091). The second author is partly supported by the Mizuho international foundation through foreign students scholarship program.

\end{document}